\documentclass[reqno,12pt]{amsart}
\usepackage[T1]{fontenc}
\usepackage[latin1]{inputenc}
\usepackage{setspace}
\usepackage{epsfig}
\usepackage{setspace}
    \addtolength{\textwidth}{2cm}
    \addtolength{\hoffset}{-1cm}
\makeatletter

 \theoremstyle{plain}
 \newtheorem{theorem}{Theorem}[section]
 \newtheorem{Def}[theorem]{Definition}
 \newtheorem{lemma}[theorem]{Lemma}
 \newtheorem{corollary}[theorem]{Corollary}
 \numberwithin{equation}{section} 
\numberwithin{figure}{section} 

 \newtheorem{remark}[theorem]{Remark}
 \newtheorem*{acknowledgement*}{Acknowledgement}

\usepackage[T1]{fontenc}
\usepackage[latin1]{inputenc}
\usepackage{amssymb}

\newcommand{\R}{\mathbb R}
\newcommand{\C}{\mathcal C}

\newcommand{\N}{\mathbb N}

\newcommand{\Span}{\textnormal{span}}

\makeatletter

\makeatother
\title[Boundary control of elliptic solutions]{Boundary control of elliptic solutions to enforce local constraints}


\author{G. Bal$^1$ and M. Courdurier$^2$}
\thanks{$^1$ Department of Applied Physics and Applied Mathematics, Columbia University,
New York, NY 10027; gb2030@columbia.edu}
\thanks{$^2$ Departamento de Matem\'aticas, Pontificia Universidad Cat\'olica de Chile, Santiago, Chile; mcourdurier@mat.puc.cl}

\begin{document}
\maketitle

\begin{abstract}

We present a constructive method to devise boundary conditions for solutions of second-order elliptic equations so that these solutions satisfy specific qualitative properties such as: (i) the norm of the gradient of one solution is bounded from below by a positive constant in the vicinity of a finite number of prescribed points; and (ii) the determinant of gradients of $n$ solutions is bounded from below in the vicinity of a finite number of prescribed points. Such constructions find applications in recent hybrid medical imaging modalities.

The methodology is based on starting from a controlled setting in which the constraints are satisfied and continuously modifying the coefficients in the second-order elliptic equation. The boundary condition is evolved by solving an ordinary differential equation (ODE) defined so that appropriate optimality conditions are satisfied. Unique continuations and standard regularity results for elliptic equations are used to show that the ODE admits a solution for sufficiently long times.

\end{abstract}

\section{Introduction}
\label{sec:intro}

Several recent hybrid medical imaging modalities may be recast as systems of nonlinear partial differential equations with known sources; see, e.g., \cite{A-Sp-08,AS-IP-12,B-Irvine-12,B-IO-12,KK-EJAM-08,S-SP-2011,WW-W-07} for reference on such modalities. The solution of such systems requires that said sources satisfy specific properties which may often be recast as specific, qualitative properties of solutions of second-order partial differential equations. In the applications presented in, e.g., \cite{BR-IP-11,BU-IP-10}, solutions of second-order elliptic equations are required to have gradients that do not vanish, at least locally. In other applications described in, e.g., \cite{ABCTF-SIAP-08,BBMT-12,CFGK-SJIS-09,MB-IPI-12,MB-IP-12}, the determinant of the gradients of $n$ solutions in spatial dimension $n$ is required to be bounded away from $0$. 

Such qualitative properties are to be ensured by controlling the boundary conditions of the elliptic solutions. Using theories based on complex geometric optics solutions or on unique continuation principles and Runge approximations, it is shown in, e.g., \cite{BU-CPAM-12,BU-IP-10,T-IP-10} that the qualitative properties are satisfied for an open set of boundary conditions that is not precisely characterized. 

This paper presents a methodology to construct boundary conditions such that the qualitative properties are satisfied locally. To simplify the presentation, we consider the setting of a second-order elliptic equation in divergence form with an arbitrary (elliptic) diffusion coefficient. Starting from a configuration where the diffusion coefficient is constant and where boundary conditions can easily be defined so that the qualitative property is satisfied, we propose to continuously deform the diffusion coefficient from the constant one to the final coefficient of interest. An ordinary differential equation (ODE) is then prescribed for the evolution of the boundary condition so that the qualitative property of interest is satisfied, at least locally in the vicinity of a finite number of points of interest, during the whole homotopy transformation. The qualitative property is recast as an adapted set of constraints. The ODE is tailored so that optimality conditions are met to satisfy the set of constraints. That the ODE solution exists for the whole duration of the homotopy transformation is guaranteed by using a unique continuation principle for solutions to elliptic equations. The whole procedure may be seen as an optimal boundary control method so that the elliptic solutions satisfy appropriate constraints inside the domain.

The rest of the paper is structured as follows. The construction of boundary conditions ensuring that the gradient of the solution does not vanish in the vicinity of a given point is introduced in section \ref{sec:description}.  Section \ref{Main} presents the main results of this paper.
In Section \ref{ConstructingF}, we describe the optimality conditions that justify our choice of the evolution equation (the ODE) and give an example of a simpler, more naive, construction that does not achieve our objectives. 
Section \ref{Proofs} contains the proofs of the main results. 
Section \ref{Extensions} generalizes the construction to other settings including the construction of solutions such that the gradients do not vanish at a finite number of points and the construction of solutions whose gradients form a basis in the vicinity of a finite number of points.

\section{Description of the Problem and Formulation of the Method}\label{Formulation}
\label{sec:description}

Let $X$ be a bounded domain in $\R^n$ with boundary $\partial X$.
For a given coefficient $\gamma(x)$ and a fixed $\hat{x}\in X$, the
goal is to find a boundary condition $\hat{f}$ such that
$|\nabla u(\hat{x})|\geq1$, where $u$ is the solution of the equation
\begin{align*}
\begin{cases}\nabla\cdot(\gamma\nabla u)=0 &\textnormal{ in } X\\
u=\hat{f} &\textnormal{ in } \partial X.
\end{cases}
\end{align*}

In order to construct such an $\hat{f}$ we propose an evolution scheme. Namely,
for a given $\gamma_0(x)$ let $\gamma_s:=(1-s)\gamma_0+s\gamma, \forall s\in[0,1]$.
For a family of boundary conditions $\{f_s\}_{s\in[0,1]}$ let $u_s$ denote the
corresponding solution of 
\begin{align*}
(P_s) \begin{cases}\nabla\cdot(\gamma_s\nabla u_s)=0 &\textnormal{ in } X\\
u_s=f_s &\textnormal{ in } \partial X.
\end{cases}
\end{align*}
The proposed scheme consists in constructing $\{f_s\}_{s\in[0,1]}$ with the property that $|\nabla u_s(\hat{x})|$
is non-decreasing after choosing  $\gamma_0$ and $f_0$ such that
$|\nabla u_0(\hat{x})|\geq1$; for example $\gamma_0\equiv1$ and $f_0(x_1,x_2,...,x_n)=x_1$.
Thus, $\hat{f}:=f_1$ is a solution of the problem since $|\nabla u_1(\hat{x})|\geq1$.\\

To construct $\{f_s\}_{s\in[0,1]}$,
let us assume that $f_s=f_0+\int_0^s g_t dt$ and denote $u_s'=\partial u_s/\partial s$
and $\gamma_s'=\partial \gamma_s/\partial s=\gamma-\gamma_0$. Differentiating $(P_s)$
with respect to $s$ gives the equation
\begin{align*}
\begin{cases}\nabla\cdot(\gamma_s\nabla u_s')+\nabla\cdot(\gamma_s'\nabla u_s)=0 &\textnormal{ in } X\\
u_s'=g_s &\textnormal{ in } \partial X.
\end{cases}
\end{align*} 
The condition that  $|\nabla u_s(\hat{x})|$ is non-decreasing becomes
$\nabla u_s(\hat{x})\cdot\nabla u_s'(\hat{x})\geq0$.
Such a characterization hints at the construction
of $\{f_s\}_{s\in[0,1]}$ by means of an initial value problem.

We construct $\{f_s,g_s: s\in [0,1]\}$ as the solution of
\begin{align*}
\begin{cases} g_s=\frac{\partial f_s}{\partial s}=F(f_s,s)\\
f_s\Big|_{s=0}=f_0
\end{cases}
\end{align*}
for an $F$ satisfying two specific conditions.

The first condition on the functional $F$ is that it guarantees
$\nabla u_s(\hat{x})\cdot\nabla u'_s(\hat{x})\geq0$. The second condition on $F$ is that it admits a solution
for initial value problem, with initial condition $f_0$, for all $s\in[0,1]$.

In this work, we provide an explicit description of a functional $F$ (Definition \ref{definitionF})
satisfying those two conditions (Theorems \ref{firstpropF}, \ref{secondpropF}), hence not only
solving the original problem, but also providing an explicit method to construct the solution $\hat{f}$.\\

\section{Notation, Framework and Main Results}\label{Main}

\subsection{Notation} The following notation will be used through the paper.
Let $X$ be a bounded domain, let $\partial X$ denote its boundary,
at $x\in\partial X$ let $\nu(x)$ denote the outer unit normal to $X$. Let $\overline{X}$ be the closure of $X$.
The notation $C^{k,\alpha},k\in\N, 0<\alpha\leq1$ represents H\"older continuity, i.e., $k$ continuous
derivatives with the $k$-th derivative being H\"older continuous of order $\alpha$; in the case $\alpha$=1
the k-th derivative is Lipschitz continuous. Let $C^{k,\alpha}(\Omega)$ be the space of H\"older
continuous functions from $\Omega$ into $\R$ and write $X\in C^{k,\alpha}$
to mean that $\partial X$ can be locally represented as the graph of a H\"older continuous function.
In $C^{k,\alpha}(\Omega)$ the norm of a function $f$ is written as $|f|_{k,\alpha,\Omega}$.
We use the classical notation for the integrable spaces $L^p(\Omega)$ and the norm $|f|_{L^p(\Omega)}, p\in[1,\infty]$.
Denote as $W^{k,p}(\Omega)$, $k\in\N, p\in[1,\infty]$, the Sobolev space of functions with
$k$ weak derivatives in $L^p(\Omega)$. In these spaces, we consider the usual norm that makes
them Banach spaces. Let $W_0^{k,p}(\Omega)$ be the completion of $C^\infty_0(\Omega)$
in $W^{k,p}(\Omega)$; see \cite{gt1} for additional details.

\subsection{Hypotheses} The following hypotheses will be assumed throughout this section.
We assume that $X$ is a bounded subset in $\R^n$, $n\in\N$ fixed. We fix $p\in(1,\frac n{n-1})$ and let
$\alpha=n\frac {p-1}{p}\in (0,1)$. We fix $k\in \N$.\\

We assume that $X$ is a $C^{k+3,\alpha}$ bounded domain, $\hat{x}\in X$ is fixed.\\

We assume that $\gamma\in C^{k+n+3}(\overline{X})$ and that there exist constants $c,C$ such that
$0<c<\gamma<C$ in $\overline{X}$.\\

Let  $\gamma_0\equiv1$ and $f_0(x_1,x_2,...,x_n)=x_1$. For $s\in [0,1]$
define $\gamma_s:=(1-s)\gamma_0+s\gamma$.

\subsection{Main Results} The first theorem summarizes classical results and shows that the formal
calculations in Section \ref{Formulation} are valid in this setting.

\begin{theorem}\label{frame} Let $s\mapsto f_s \in C^1\big([0,1]; C^{k+2,\alpha}(\partial X)\big)$. For each $s\in[0,1]$
there is a unique solution $u_s\in C^{k+2,\alpha}(\overline{X})$ of the equation 
\begin{align*}
(P_s) \begin{cases}\nabla\cdot(\gamma_s\nabla u_s)=0 &\textnormal{ in } X\\
u_s=f_s &\textnormal{ in } \partial X
\end{cases}
\end{align*}
and $s\mapsto u_s\in  C^1\big([0,1]; C^{k+2,\alpha}(\overline{X})\big)$. Let
$u_s'=\partial u_s/\partial s, f_s'=\partial u_s/\partial s$ and
$\gamma_s'=\partial \gamma_s/\partial s=\gamma-\gamma_0$. Then $u_s'$ satisfies
the equation
\begin{align*}
(P'_s) \begin{cases}\nabla\cdot(\gamma_s\nabla u_s')+\nabla\cdot(\gamma_s'\nabla u_s)=0 &\textnormal{ in } X\\
u_s'=f_s' &\textnormal{ in } \partial X
\end{cases}
\end{align*}
and $\frac{d}{ds}\big(\frac{1}{2}|\nabla u_s(\hat{x})|^2\big)=\nabla u_s(\hat{x})\cdot \nabla u_s'(\hat{x})$.
\end{theorem}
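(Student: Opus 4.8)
The plan is to establish the three assertions of Theorem~\ref{frame} in turn: existence/uniqueness and regularity of $u_s$ for each fixed $s$, the $C^1$-dependence of $s \mapsto u_s$, and then the differentiated equation $(P'_s)$ together with the chain-rule identity at $\hat{x}$.

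For the first part, I would fix $s \in [0,1]$ and observe that $\gamma_s = (1-s)\gamma_0 + s\gamma$ inherits the ellipticity bounds $0 < c' < \gamma_s < C'$ (with $c' = \min(1,c)$, $C' = \max(1,C)$) and the regularity $\gamma_s \in C^{k+n+3}(\overline{X}) \subset C^{k+2,\alpha}(\overline{X})$ uniformly in $s$; the boundary $\partial X$ is $C^{k+3,\alpha}$ and the datum $f_s \in C^{k+2,\alpha}(\partial X)$. Standard Schauder theory for the Dirichlet problem for divergence-form elliptic equations (e.g.\ Gilbarg--Trudinger, cited as \cite{gt1}) then gives a unique solution $u_s \in C^{k+2,\alpha}(\overline{X})$, with the a priori estimate $|u_s|_{k+2,\alpha,X} \le K\, |f_s|_{k+2,\alpha,\partial X}$ for a constant $K$ depending on $c',C',X$ and the $C^{k+2,\alpha}$-norms of the $\gamma_s$, hence uniform in $s$.

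For the $C^1$-dependence, I would use linearity: for $s, s_0 \in [0,1]$ the difference $w = u_s - u_{s_0}$ solves $\nabla\cdot(\gamma_s \nabla w) = -\nabla\cdot((\gamma_s - \gamma_{s_0})\nabla u_{s_0})$ in $X$ with $w = f_s - f_{s_0}$ on $\partial X$; since $\gamma_s - \gamma_{s_0} = (s - s_0)\gamma'_s$ and $f_s - f_{s_0} = \int_{s_0}^{s} f'_t\,dt$, the Schauder estimate gives $|u_s - u_{s_0}|_{k+2,\alpha,X} = O(|s - s_0|)$, so $s \mapsto u_s$ is Lipschitz, hence continuous. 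Dividing by $s - s_0$, the difference quotients $(u_s - u_{s_0})/(s-s_0)$ satisfy a problem whose coefficients and data converge in the appropriate norms (using $C^1$-dependence of $f_s$ and the already-established continuity of $s \mapsto u_s$); passing to the limit and invoking uniqueness and the uniform Schauder bound identifies the limit as the solution $v$ of $(P'_s)$ with boundary datum $f'_s$, and shows convergence in $C^{k+2,\alpha}(\overline{X})$. A further application of the same argument to $v$ gives continuity of $s \mapsto u'_s$, so $s \mapsto u_s \in C^1([0,1]; C^{k+2,\alpha}(\overline{X}))$ and $u'_s = v$ solves $(P'_s)$. Finally, since $u_s \in C^{k+2,\alpha}(\overline{X})$ with $k \ge 0$ implies $\nabla u_s$ is defined pointwise and the map $s \mapsto \nabla u_s \in C^1([0,1]; C(\overline{X};\R^n))$, evaluation at $\hat{x}$ is a bounded linear functional, so $s \mapsto \nabla u_s(\hat{x})$ is $C^1$ with derivative $\nabla u'_s(\hat{x})$, and the product rule yields $\frac{d}{ds}\big(\tfrac12|\nabla u_s(\hat{x})|^2\big) = \nabla u_s(\hat{x})\cdot\nabla u'_s(\hat{x})$.

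The main obstacle is the rigorous justification that the difference quotients actually converge \emph{in} $C^{k+2,\alpha}(\overline{X})$ to a solution of $(P'_s)$, rather than merely weakly or in a weaker norm: this is where one must combine the uniform Schauder a priori estimate (to get precompactness and control of the limit) with the uniqueness of the linearized problem (to identify the limit) and with the $C^1$-regularity hypothesis on $s \mapsto f_s$ (to control the boundary data of the difference quotient). Everything else is a bookkeeping exercise in applying standard elliptic regularity; the choice of exponents $p \in (1,\tfrac{n}{n-1})$ and $\alpha = n\frac{p-1}{p}$, and the extra derivatives built into the hypotheses on $X$ and $\gamma$, are precisely there to make the Schauder machinery close without loss, so no genuine difficulty is hidden there.
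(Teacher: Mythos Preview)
Your proposal is correct and follows essentially the same approach as the paper: the paper packages the Schauder existence/uniqueness/estimate as its Theorem~\ref{mainu} (citing \cite{gt1}) and then states the $C^1$-dependence and differentiated equation as Corollary~\ref{derivu}, remarking only that it follows from ``the estimate and the linearity of the problem''. You have simply spelled out the standard difference-quotient argument that the paper leaves implicit.
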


For $y\in \R$ let $\partial_y =(y\cdot \nabla)$ denote the $y$-directional derivative in $\R^n$.
Let $s\in[0,1]$. The following auxiliary problem is crucial in our analysis:
\begin{align*}
(A_s)\begin{cases}\nabla\cdot(\gamma_s\nabla \lambda)=\partial_y\delta_{\hat{x}}&\textnormal{ in } X\\
\lambda=0&\textnormal{ in } \partial X.
\end{cases}
\end{align*}
Here, $\delta_{\hat{x}}$ is the distribution at $\hat x$ such that $\int_{X} \delta_{\hat x} f(x) dx = f(\hat x)$. The dependence of $\lambda$ on $s$ is not written explicitly since it will be clear from the context.

\begin{theorem}\label{existslambda} The problem $(A_s)$ above has a unique solution 
$\lambda\in L^p(X)\cap C^{k+3,\alpha}(\overline{X}\setminus \{\hat{x}\})$.
If $U\subset (\overline{X}\setminus\{\hat{x}\})$ is compact, then
$s\mapsto \lambda|_U \in C([0,1]; L^p(X)\cap C^{k+3,\alpha}(U))$.
\end{theorem}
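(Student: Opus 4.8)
The plan is to treat $(A_s)$ as a perturbation of the constant-coefficient Laplace equation whose fundamental solution is explicit, and then invoke linear elliptic regularity away from the singularity. First I would fix $s$ and split $\lambda = \Phi + w$, where $\Phi$ is an explicit local parametrix: take $E$ the fundamental solution of $\nabla\cdot(\gamma_s(\hat x)\nabla \cdot) = \delta_{\hat x}$ (a constant multiple of the Newtonian potential since $\gamma_s(\hat x)$ is a positive constant), set $\Phi = \partial_y(\chi E)$ with $\chi$ a cutoff equal to $1$ near $\hat x$ and supported in $X$. Then $\nabla\cdot(\gamma_s\nabla\Phi) = \partial_y\delta_{\hat x} + h$ with $h$ a distribution; the point is that $h$ lies in $L^p(X)$ for the chosen $p\in(1,\frac n{n-1})$, because the worst term $\nabla\cdot\big((\gamma_s-\gamma_s(\hat x))\nabla\Phi\big)$ near $\hat x$ behaves like $|x-\hat x|\cdot|x-\hat x|^{-n-1}$ times the Lipschitz modulus of $\gamma_s$, i.e. like $|x-\hat x|^{-n}$ up to the $C^1$ smoothness of $\gamma_s$ — here one uses $\gamma\in C^{k+n+3}$, which gives enough derivatives that the singular part of $h$ is actually $O(|x-\hat x|^{-n+1})\in L^p$ for $p<\frac n{n-1}$, plus a smooth remainder away from $\hat x$. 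Then $w$ solves $\nabla\cdot(\gamma_s\nabla w) = -h$ in $X$, $w=-\Phi|_{\partial X}$ on $\partial X$ (the latter being a smooth function since $\chi\equiv 0$ near $\partial X$, so in fact $w=0$ there), and $W^{2,p}$-solvability of the Dirichlet problem (Agmon–Douglis–Nirenberg / the references in \cite{gt1}) gives a unique $w\in W^{2,p}(X)\hookrightarrow C^{0,\alpha}(\overline X)$ with $\alpha = n\frac{p-1}{p}$. Uniqueness of $\lambda$ in $L^p$ follows because any difference of two solutions is a distributional solution of the homogeneous equation with zero boundary data, hence (by the removable-singularity statement: an $L^p$ distributional solution across a point is a genuine weak $H^1$ solution when $p$ is subcritical) identically zero.

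For the interior regularity away from $\hat x$: on any open $V\Subset X\setminus\{\hat x\}$, $\lambda$ solves $\nabla\cdot(\gamma_s\nabla\lambda)=0$ with $\gamma_s\in C^{k+n+3}\subset C^{k+3,\alpha}$, so Schauder interior estimates bootstrap $\lambda$ to $C^{k+3,\alpha}$ locally (indeed to $C^{k+n+3,\alpha}$, but $C^{k+3,\alpha}$ suffices); near a boundary patch, since $\Phi$ vanishes identically in a neighborhood of $\partial X$, $\lambda=w$ there solves the homogeneous equation with zero Dirichlet data and $\partial X\in C^{k+3,\alpha}$, so boundary Schauder estimates give $\lambda\in C^{k+3,\alpha}$ up to $\partial X$. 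Combining, $\lambda\in C^{k+3,\alpha}(\overline X\setminus\{\hat x\})$ as claimed.

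For the continuity in $s$: the parametrix $\Phi$ depends on $s$ only through the scalar $\gamma_s(\hat x)=(1-s)+s\gamma(\hat x)$, so $s\mapsto\Phi$ is smooth into $C^{k+3,\alpha}(U)$ for any compact $U\not\ni\hat x$, and also $s\mapsto h\in L^p(X)$ is continuous (in fact affine-linear plus controlled terms) since $\gamma_s$ depends affinely on $s$ with $s$-independent bounds. Then $w_s$ solves an elliptic problem whose coefficients and right-hand side depend continuously on $s$ with uniform ellipticity constants (here $0<\min(1,c)\le\gamma_s\le\max(1,C)$ uniformly in $s$), so the a priori $W^{2,p}$ estimate together with the linearity of the equation in $(w,h,\text{boundary data})$ gives $\|w_s - w_{s'}\|_{W^{2,p}} \lesssim \|h_s-h_{s'}\|_{L^p} + \|(\gamma_s-\gamma_{s'})\nabla^2 w_{s'}\|_{L^p}+\cdots \to 0$, hence $s\mapsto w_s\in C([0,1];W^{2,p}(X))$; restricting to $U$ and applying the same Schauder argument with $s$-dependence tracked through the estimates yields $s\mapsto\lambda|_U\in C([0,1];C^{k+3,\alpha}(U))$, and $s\mapsto\lambda\in C([0,1];L^p(X))$ from $W^{2,p}\hookrightarrow L^p$ on the bounded domain $X$.

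The main obstacle is the first step: verifying that the chosen exponent range $p\in(1,\frac n{n-1})$ is exactly what makes $\lambda\in L^p$ meaningful and makes the removable-singularity/uniqueness argument go through, while simultaneously checking that $h$ — the error produced by freezing the coefficient and cutting off — genuinely lands in $L^p(X)$. This is where the smoothness hypothesis $\gamma\in C^{k+n+3}$ is used in an essential way (to kill enough orders of the singularity via a higher-order Taylor expansion of $\gamma_s$ around $\hat x$ in the parametrix construction), and it is the only place where a careful, non-routine estimate near $\hat x$ is required; the rest is standard Schauder and $W^{2,p}$ theory applied with uniform-in-$s$ constants.
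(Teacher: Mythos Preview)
Your overall strategy (split $\lambda$ as an explicit singular part plus an elliptic correction, then use Schauder away from $\hat x$ and track $s$-dependence via a priori estimates) matches the paper's, but there is a genuine gap in the parametrix step. With the zeroth-order freeze $\Phi=\partial_y(\chi E)$, the error $h=\nabla\cdot(\gamma_s\nabla\Phi)-\partial_y\delta_{\hat x}$ is \emph{not} in $L^p(X)$: near $\hat x$, $\nabla\Phi\sim|x-\hat x|^{-n}$, so the term $\nabla\gamma_s\cdot\nabla\Phi$ is of order $|x-\hat x|^{-n}$, which is not locally integrable (let alone in $L^p$ for $p>1$). Your sentence ``here one uses $\gamma\in C^{k+n+3}$, which gives enough derivatives that the singular part of $h$ is actually $O(|x-\hat x|^{-n+1})$'' is not correct as written: extra smoothness of $\gamma_s$ does nothing to the singularity of $h$ unless you actually \emph{use} those derivatives to build higher-order terms in the parametrix. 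Your last paragraph concedes this (``via a higher-order Taylor expansion of $\gamma_s$ around $\hat x$ in the parametrix construction''), but that is inconsistent with the construction you wrote down, and the higher-order construction is precisely the substantive content you have omitted.

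The paper's proof fills this gap by Taylor-expanding $\gamma_s=\sum_{j=0}^K c_j+\gamma_K$ around $\hat x$ and building a parametrix $w_K=\sum_{m=0}^K v_m$ recursively: $v_0=c_0^{-1}\partial_y g$ with $g$ the Dirichlet Green's function for $\Delta$ on a large ball, and each subsequent $v_m$ is chosen so that the cross-terms $\sum_{i=0}^m\nabla\cdot(c_{m-i}\nabla v_i)$ cancel. The key technical lemma is that $v_m\in W^{m,p}$ and $d_j v_m\in W^{m+j,p}$ for any homogeneous polynomial $d_j$ of degree $j$; this is what forces the remainder $h_K=\sum_m\nabla\cdot(\gamma_{K-m}\nabla v_m)$ into $W^{K-2,p}$. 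Taking $K=k+n+3$ then gives the correction $v\in W^{K,p}\hookrightarrow C^{k+3,\alpha}$ directly by Sobolev embedding, with the continuity in $s$ following from the explicit polynomial dependence of the $v_m$ on the Taylor coefficients of $\gamma_s$. So the paper uses a sufficiently high-order parametrix to get the correction in $C^{k+3,\alpha}$ in one shot, rather than your proposed route of a low-order parametrix plus Schauder bootstrap away from $\hat x$; your route could be made to work, but only after you build at least a second-order parametrix to land $h$ in $L^p$, and that recursive construction is the part that cannot be skipped.
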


We proceed to define an adequate functional $F$ for the initial value problem of $\{f_s\}_{s\in[0,1]}$.\\

\begin{Def} \label{definitionF}
Given $f\in C^{k+2,\alpha}(\partial X)$ and $s\in[0,1]$, let $u\in C^{k+2,\alpha}(X)$ be the solution of
\begin{align*}
\begin{cases}\nabla\cdot(\gamma_s\nabla u)=0 &\textnormal{ in } X\\
u=f &\textnormal{ in } \partial X.
\end{cases}
\end{align*}
Let $\lambda$ be the solution of
\begin{align*}
\begin{cases}\nabla\cdot(\gamma_s\nabla \lambda)=\nabla u(\hat{x}) \cdot \nabla \delta_{\hat{x}}&\textnormal{ in } X\\
\lambda=0&\textnormal{ in } \partial X.
\end{cases}
\end{align*}
If $\nabla u(\hat{x}) = 0$ let $\mu>0$, otherwise let
\begin{align*}
\mu= \frac{\int_X\lambda\nabla \cdot((\gamma-\gamma_0)\nabla u)}{\qquad \Big|\gamma_s\frac{\partial\lambda}{\partial\nu}\Big|^2_{L^2(\partial X)}}.
\end{align*}
We define $F:C^{k+2,\alpha}(\partial X)\times[0,1]\to C^{k+2,\alpha}(\partial X)$ as
\begin{align*}
F(f,s):= \begin{cases}
0& \textnormal{ if } \mu\geq 0\\
\mu \gamma_s(\frac{\partial\lambda}{\partial \nu})& \textnormal{ if } \mu\leq 0.
\end{cases}
\end{align*}
\end{Def}
The functional $F$ satisfies the required properties.

\begin{theorem}\label{firstpropF}
Given $f\in C^{k+2,\alpha}(\partial X)$ and $s\in[0,1]$, let $u\in C^{k+2,\alpha}(X)$
be the solution of
\begin{align*}
\begin{cases}\nabla\cdot(\gamma_s\nabla u)=0 &\textnormal{ in } X\\
u=f &\textnormal{ in } \partial X.
\end{cases}
\end{align*}
Let $g=F(f,s)$ and let $v$ be the solution of 
\begin{align*}
\begin{cases}\nabla\cdot(\gamma_s\nabla v)+\nabla\cdot(\gamma_s'\nabla u)=0 &\textnormal{ in } X\\
v=g &\textnormal{ in } \partial X.
\end{cases}
\end{align*}
Then $\nabla u(\hat{x})\cdot \nabla v(\hat{x})\geq 0$.
\end{theorem}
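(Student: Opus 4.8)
The plan is to compute $\nabla u(\hat x)\cdot\nabla v(\hat x)$ explicitly and recognize it as $\mu$ times a manifestly nonnegative quantity, so that the sign convention built into Definition \ref{definitionF} forces the result. First I would dispose of the trivial cases: if $\nabla u(\hat x)=0$ there is nothing to prove, and if $\mu\geq 0$ then $g=0$, so $v$ solves a problem with homogeneous boundary data; I would then use the auxiliary adjoint $\lambda$ (the solution of $\nabla\cdot(\gamma_s\nabla\lambda)=\nabla u(\hat x)\cdot\nabla\delta_{\hat x}$ with $\lambda=0$ on $\partial X$, which exists and lies in $L^p\cap C^{k+3,\alpha}$ away from $\hat x$ by Theorem \ref{existslambda}) to write
\begin{align*}
\nabla u(\hat x)\cdot\nabla v(\hat x) = \langle \nabla u(\hat x)\cdot\nabla\delta_{\hat x}, v\rangle = \int_X \nabla\cdot(\gamma_s\nabla\lambda)\, v.
\end{align*}
Here the pairing of the distribution $\nabla u(\hat x)\cdot\nabla\delta_{\hat x}$ against $v\in C^{k+2,\alpha}$ makes sense because $v$ is $C^1$ near $\hat x$, and it reproduces $\nabla u(\hat x)\cdot\nabla v(\hat x)$ up to the standard sign from integrating the derivative of a delta.

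The heart of the argument is a Green's-identity / integration-by-parts manipulation relating this to the equation for $v$. Integrating by parts twice, and using that $\lambda=0$ on $\partial X$ and that $v$ solves $\nabla\cdot(\gamma_s\nabla v)=-\nabla\cdot(\gamma_s'\nabla u)$ in $X$ with $v=g$ on $\partial X$, I expect
\begin{align*}
\int_X \nabla\cdot(\gamma_s\nabla\lambda)\,v = \int_X \lambda\,\nabla\cdot(\gamma_s\nabla v) + \int_{\partial X} \Big(\gamma_s\frac{\partial\lambda}{\partial\nu}\Big) g = -\int_X \lambda\,\nabla\cdot(\gamma_s'\nabla u) + \int_{\partial X}\Big(\gamma_s\frac{\partial\lambda}{\partial\nu}\Big) g.
\end{align*}
In the case $\mu\leq 0$ we have $g = \mu\,\gamma_s\frac{\partial\lambda}{\partial\nu}$ on $\partial X$, so the boundary term equals $\mu\,\big|\gamma_s\frac{\partial\lambda}{\partial\nu}\big|^2_{L^2(\partial X)}$; substituting the definition of $\mu$, the two terms combine to give $\nabla u(\hat x)\cdot\nabla v(\hat x) = \big(1-\tfrac{1}{?}\big)$-type cancellation — more precisely, since $\mu\cdot\big|\gamma_s\frac{\partial\lambda}{\partial\nu}\big|^2_{L^2(\partial X)} = \int_X\lambda\,\nabla\cdot((\gamma-\gamma_0)\nabla u)$ and $\gamma_s' = \gamma-\gamma_0$, I anticipate the bulk term and boundary term cancel exactly and one is left with a multiple of $-\int_X\lambda\,\nabla\cdot(\gamma_s'\nabla u)$; I would then re-examine the bookkeeping so that what survives is $-\mu\,\big|\gamma_s\frac{\partial\lambda}{\partial\nu}\big|^2_{L^2(\partial X)}\geq 0$ (using $\mu\leq 0$). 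In the case $\mu\geq 0$, $g=0$ kills the boundary term and one checks directly that $-\int_X\lambda\,\nabla\cdot(\gamma_s'\nabla u) = -\mu\,\big|\gamma_s\frac{\partial\lambda}{\partial\nu}\big|^2_{L^2(\partial X)}\leq 0$, which is exactly what we want for the complementary sign — so in fact $F$ is designed so that $\nabla u(\hat x)\cdot\nabla v(\hat x)$ equals a nonnegative quantity in both regimes, with the switch at $\mu=0$ making the two formulas agree.

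The main obstacle I foresee is justifying the integrations by parts rigorously given the singularity of $\lambda$ at $\hat x$: $\lambda\in L^p(X)$ only, its gradient is more singular still, and the distributional source $\nabla u(\hat x)\cdot\nabla\delta_{\hat x}$ is of order one, so the formal Green identities must be validated by excising a small ball $B_\varepsilon(\hat x)$, controlling the boundary integrals over $\partial B_\varepsilon$ as $\varepsilon\to 0$ using the known interior regularity of $v$ (which is smooth near $\hat x$) and the precise rate of blow-up of $\lambda$ and $\gamma_s\nabla\lambda$ near $\hat x$ coming from Theorem \ref{existslambda} and the fundamental-solution asymptotics. Once that limiting argument is in place, the identity $\nabla u(\hat x)\cdot\nabla v(\hat x) = -\mu\,\big|\gamma_s\frac{\partial\lambda}{\partial\nu}\big|^2_{L^2(\partial X)}$ (valid in both cases, interpreting $\mu$ as the defined quotient when $\nabla u(\hat x)\neq 0$) holds, and the definition $F(f,s)=0$ when $\mu\geq 0$ versus $F(f,s)=\mu\gamma_s\frac{\partial\lambda}{\partial\nu}$ when $\mu\leq 0$ — wait, I should double-check the sign: in the $\mu\geq 0$ branch $g=0$ gives $\nabla u(\hat x)\cdot\nabla v(\hat x) = -\mu\,\big|\gamma_s\frac{\partial\lambda}{\partial\nu}\big|^2_{L^2(\partial X)}\leq 0$, which would be the wrong sign, so the correct reading must be that with $g=0$ the bulk term gives $+\mu\,\big|\cdots\big|^2$, i.e. the sign of $\int_X\lambda\,\nabla\cdot((\gamma-\gamma_0)\nabla u)$ matches that of $\mu$ by the very definition of $\mu$, hence $\nabla u(\hat x)\cdot\nabla v(\hat x)\geq 0$ when $\mu\geq 0$; and in the $\mu\leq 0$ branch the boundary term $\mu\,\big|\gamma_s\frac{\partial\lambda}{\partial\nu}\big|^2_{L^2(\partial X)}$ is subtracted from $\mu\,\big|\gamma_s\frac{\partial\lambda}{\partial\nu}\big|^2_{L^2(\partial X)}$-worth of bulk, leaving $0$, or more plausibly leaving $-\mu\,\big|\cdots\big|^2\geq 0$ — I would nail down this sign bookkeeping carefully against the Green's identity with the excised ball, as that is where the real content (and the only place an error could hide) lies.
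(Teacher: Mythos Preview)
Your approach is exactly the paper's: dispose of $\nabla u(\hat x)=0$, then use the adjoint $\lambda$ and Green's identity to reduce everything to the sign of $\mu$. The only issue is that your sign bookkeeping never settles; here is how it resolves. Since $\langle \partial_y\delta_{\hat x},v\rangle=-\partial_y v(\hat x)$, the pairing gives $-\nabla u(\hat x)\cdot\nabla v(\hat x)=\int_X v\,\nabla\cdot(\gamma_s\nabla\lambda)$, and Green's identity (with $\lambda|_{\partial X}=0$, $v|_{\partial X}=g$) yields
\[
\nabla u(\hat x)\cdot\nabla v(\hat x)=\int_X\lambda\,\nabla\cdot(\gamma_s'\nabla u)\;-\;\int_{\partial X}\gamma_s\frac{\partial\lambda}{\partial\nu}\,g.
\]
When $\mu\ge 0$, $g=0$ and the right side equals $\int_X\lambda\,\nabla\cdot((\gamma-\gamma_0)\nabla u)=\mu\,\big|\gamma_s\tfrac{\partial\lambda}{\partial\nu}\big|_{L^2(\partial X)}^2\ge 0$. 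When $\mu\le 0$, $g=\mu\,\gamma_s\tfrac{\partial\lambda}{\partial\nu}$ and the two terms cancel exactly, giving $\nabla u(\hat x)\cdot\nabla v(\hat x)=0$ (not $-\mu\,|\cdots|^2$). That is the whole proof in the paper.

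Your concern about justifying the integration by parts across the singularity of $\lambda$ at $\hat x$ is legitimate and is the only place any analysis hides; the paper simply writes ``integration by parts'' without further comment. Your proposed remedy (excise $B_\varepsilon(\hat x)$, use that $v\in C^{k+2,\alpha}$ is smooth near $\hat x$ while $\lambda$ has the explicit fundamental-solution blow-up from Theorem~\ref{existslambda}, and pass to the limit) is the standard and correct way to make this rigorous.
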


The second property for $F$ requires a strong relationship
between the solution of the auxiliary problem $(A_s)$ and its normal derivative
at the boundary. In particular, the following injectivity results is needed.

\begin{theorem}\label{injectlambda}
Let $\lambda\in L^p(X)\cap C^{k+3,\alpha}(\overline{X}\setminus \{\hat{x}\})$ be the
solution of $(A_s)$ and let $\gamma_s\partial \lambda/\partial \nu\big|_{\partial X}\in C^{k+2,\alpha}(\partial X)$
be its normal derivative at the boundary. Then
\begin{align*}
\Big[\lambda\equiv0\Big]\Leftrightarrow \Big[\gamma_s\frac{\partial\lambda}{\partial \nu}\Big|_{\partial X}\equiv0\Big]\Leftrightarrow \Big[y=0\Big].
\end{align*}
\end{theorem}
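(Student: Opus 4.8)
The plan is to prove Theorem~\ref{injectlambda} by establishing the implications
$[y=0]\Rightarrow[\lambda\equiv0]\Rightarrow[\gamma_s\partial\lambda/\partial\nu\equiv0]\Rightarrow[y=0]$,
the first two of which are essentially immediate, the last being the substantive one.
If $y=0$, then the source $\partial_y\delta_{\hat x}$ in $(A_s)$ vanishes identically, so $\lambda$ solves a homogeneous elliptic problem with zero boundary data, whence $\lambda\equiv0$ by uniqueness (Theorem~\ref{existslambda}); and obviously $\lambda\equiv0$ forces $\gamma_s\partial\lambda/\partial\nu\big|_{\partial X}\equiv0$ since the trace of a function that is $C^{k+3,\alpha}$ up to the boundary vanishes together with all its derivatives.

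The heart of the matter is to show that $\gamma_s\partial\lambda/\partial\nu\big|_{\partial X}\equiv0$ implies $y=0$.
First I would argue that, away from $\hat x$, the function $\lambda$ solves $\nabla\cdot(\gamma_s\nabla\lambda)=0$ classically in $X\setminus\{\hat x\}$, and it satisfies \emph{both} $\lambda=0$ and $\gamma_s\partial\lambda/\partial\nu=0$ on $\partial X$; that is, $\lambda$ has vanishing Cauchy data on $\partial X$. The idea is then to invoke a unique continuation principle: a solution of a second-order elliptic equation in divergence form with $C^{k+n+3}$ (in particular Lipschitz, indeed smooth) coefficient that has zero Cauchy data on an open subset of the boundary must vanish in the connected component of the domain touching that portion of the boundary. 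Concretely, extend $X$ slightly across $\partial X$, extend $\gamma_s$ smoothly, and extend $\lambda$ by $0$ outside $X$; the vanishing Cauchy data guarantees this extension is still a weak solution of the elliptic equation in a neighborhood of $\partial X$, so by unique continuation $\lambda\equiv0$ in a neighborhood of $\partial X$ inside $X$, and then, propagating through the connected open set $X\setminus\{\hat x\}$, one concludes $\lambda\equiv0$ on all of $X\setminus\{\hat x\}$, hence $\lambda\equiv0$ as an element of $L^p(X)$.

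It remains to deduce $y=0$ from $\lambda\equiv0$. Here I would test the equation $\nabla\cdot(\gamma_s\nabla\lambda)=\partial_y\delta_{\hat x}$ against a smooth function: for any $\varphi\in C^\infty(\overline X)$,
\begin{align*}
\int_X \lambda\,\nabla\cdot(\gamma_s\nabla\varphi)\,dx
- \int_{\partial X}\Big(\gamma_s\frac{\partial\varphi}{\partial\nu}\,\lambda - \gamma_s\frac{\partial\lambda}{\partial\nu}\,\varphi\Big)\,d\sigma
= \langle \partial_y\delta_{\hat x},\varphi\rangle = - y\cdot\nabla\varphi(\hat x).
\end{align*}
With $\lambda\equiv0$ and $\gamma_s\partial\lambda/\partial\nu\equiv0$ the left-hand side is zero, so $y\cdot\nabla\varphi(\hat x)=0$ for all test functions $\varphi$; choosing $\varphi$ with $\nabla\varphi(\hat x)$ running over a basis of $\R^n$ forces $y=0$. (One must check the distributional pairing is licit given $\lambda\in L^p$ and the local regularity $\lambda\in C^{k+3,\alpha}$ away from $\hat x$; this is where the integrability exponent $p$ and the definition of $\delta_{\hat x}$ enter, and is routine.)

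The main obstacle I anticipate is the precise invocation of unique continuation from Cauchy data on the boundary: one needs a statement valid for divergence-form operators with the given (smooth, but possibly not analytic) coefficient $\gamma_s$, and one must take care with the regularity of $\partial X$ and of $\lambda$ near $\partial X$ so that the zero-extension across the boundary is genuinely a weak solution — the hypotheses $X\in C^{k+3,\alpha}$ and $\lambda\in C^{k+3,\alpha}(\overline X\setminus\{\hat x\})$ are exactly what make this work. A secondary technical point is ensuring the connectedness argument through $X\setminus\{\hat x\}$ is valid, which holds since $n\ge2$ makes $X\setminus\{\hat x\}$ connected whenever $X$ is.
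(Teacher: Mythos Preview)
Your proposal is correct and follows essentially the same route as the paper: the easy implications $[y=0]\Rightarrow[\lambda\equiv0]\Rightarrow[\gamma_s\partial\lambda/\partial\nu\equiv0]$, then unique continuation from zero Cauchy data in $X\setminus\{\hat x\}$ to get $\lambda\equiv0$, and finally testing the equation against a smooth $\varphi$ with $y\cdot\nabla\varphi(\hat x)\neq0$ to force $y=0$. The only cosmetic differences are that the paper uses $\varphi\in C_0^\infty(X)$ (so no boundary terms appear) and phrases the last step as the contrapositive $[y\neq0]\Rightarrow[\lambda\not\equiv0]$.
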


This motivates us to regard $\lambda$ and its normal derivative as functions of $y\in\R^n$,
a finite dimensional space. Using the continuous dependence of $\lambda$ on $s$, 
we can recast the previous injectivity Theorem as an apparently stronger result.

\begin{corollary}\label{estlambda}
Let $\lambda$ be the solution of $(A_s)$
and let $\gamma_s\partial \lambda/\partial \nu\big|_{\partial X}$ be its normal derivative at the boundary.
There exists constants $a,b,\rho,\eta>0$ independent of $y\in \R^n$ and independent of $s\in[0,1]$, such that
\begin{align*}
\rho|y|\leq a\Big|\gamma_s\frac{\partial\lambda}{\partial \nu}\Big|_{k+2,\alpha,\partial X}\leq \Big| \lambda\Big|_{L^p(X)}
\leq b \Big|\gamma_s\frac{\partial\lambda}{\partial \nu}\Big|_{L^2(\partial X)} \leq \eta |y|.
\end{align*}
In particular, for any $\eta>0$, the quantities
\begin{align*}
\frac{\lambda}{\qquad \Big|\gamma_s\frac{\partial\lambda}{\partial \nu}\Big|_{L^2(\partial X)}} \in
L^p(X) \quad \textnormal{ and } \quad
\frac{\gamma_s\frac{\partial \lambda}{\partial \nu}}
{\qquad \Big|\gamma_s\frac{\partial\lambda}{\partial \nu}\Big|_{L^2(\partial X)}}\in C^{k+2,\alpha}(\partial X),
\end{align*}
 as functions of $y$, are uniformly Lipschitz in $\{y\in\R^n: |y| \geq \eta\}$, independently of $s\in[0,1]$.
\end{corollary}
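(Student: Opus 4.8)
The plan is to exploit that the source $\partial_y\delta_{\hat x}$ depends linearly on $y$, so that by uniqueness in Theorem \ref{existslambda} the solution of $(A_s)$ is $\lambda^{(y)}_s=\sum_{j=1}^n y_j\,\lambda^{(e_j)}_s$, and the four inequalities become operator-norm bounds, uniform in $s$, for the two linear maps $T_s:y\mapsto\lambda^{(y)}_s\in L^p(X)$ and $N_s:y\mapsto\gamma_s\,\partial\lambda^{(y)}_s/\partial\nu\in C^{k+2,\alpha}(\partial X)$ on the finite-dimensional space $\R^n$. I would reduce everything to three claims, all with constants independent of $s$ and $y$: \textbf{(a)} $|T_s y|_{L^p(X)}\le C_\ast|y|$; \textbf{(b)} $|N_s y|_{k+2,\alpha,\partial X}\le C_\mathrm{reg}\,|T_s y|_{L^p(X)}$; \textbf{(c)} $|N_s y|_{L^2(\partial X)}\ge c_\ast|y|$. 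Granting these and the elementary comparison $|g|_{L^2(\partial X)}\le C_0|g|_{k+2,\alpha,\partial X}$, the whole chain follows by inspection: the bound $|N_s y|_{k+2,\alpha,\partial X}\ge C_0^{-1}c_\ast|y|$ together with (b) handles the first two inequalities; $|T_s y|_{L^p}\le C_\ast|y|\le (C_\ast/c_\ast)|N_s y|_{L^2(\partial X)}$ the third; and $|N_s y|_{L^2(\partial X)}\le C_0|N_s y|_{k+2,\alpha,\partial X}\le C_0C_\mathrm{reg}|T_s y|_{L^p}\le C_0C_\mathrm{reg}C_\ast|y|$ the fourth, so one may take e.g.\ $a=1/C_\mathrm{reg}$, $\rho=c_\ast/(C_0C_\mathrm{reg})$, $b=C_\ast/c_\ast$, $\eta=C_0C_\mathrm{reg}C_\ast$.

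Claim (a) is immediate: each $s\mapsto\lambda^{(e_j)}_s$ is continuous from $[0,1]$ into $L^p(X)$ by Theorem \ref{existslambda}, hence bounded, and one sums over $j$. For (b), since $\hat x$ lies at positive distance from $\partial X$, there is a fixed collar $V\supset\partial X$ with $\hat x\notin\overline V$ on which $\lambda^{(y)}_s$ solves the homogeneous equation with zero Dirichlet trace; boundary elliptic regularity (interior-and-boundary $L^p$ estimates with the vanishing trace, followed by Schauder bootstrapping, using that $\gamma_s$ is uniformly elliptic with uniform $C^{k+n+3}$ bounds and $\partial X\in C^{k+3,\alpha}$) gives $|\lambda^{(y)}_s|_{C^{k+3,\alpha}(\overline X\cap V')}\le C\,|\lambda^{(y)}_s|_{L^p(X)}$ on a smaller collar $V'\supset\partial X$, with $C$ uniform in $s$; taking the normal derivative on $\partial X$ and multiplying by $\gamma_s$ — a bounded map into $C^{k+2,\alpha}(\partial X)$ with uniform norm, as $\nu$ and $\gamma_s$ are $C^{k+2,\alpha}$ with uniform bounds — yields (b). Claim (c) is where Theorems \ref{injectlambda} and \ref{existslambda} combine: for each fixed $s$, $N_s$ is linear and injective into $L^2(\partial X)$ (by Theorem \ref{injectlambda}, since an $L^2$-null element of $C^{k+2,\alpha}(\partial X)$ vanishes identically), hence bounded below; to make the bound uniform I would argue by contradiction — if $|N_{s_m}y_m|_{L^2(\partial X)}\to0$ with $|y_m|=1$, pass to subsequences $s_m\to s_\ast$, $y_m\to y_\ast$ with $|y_\ast|=1$, use the continuity of Theorem \ref{existslambda} and linearity to get $N_{s_m}y_m\to N_{s_\ast}y_\ast$ in $C^{k+2,\alpha}(\partial X)$ and hence in $L^2(\partial X)$, conclude $\gamma_{s_\ast}\,\partial\lambda^{(y_\ast)}_{s_\ast}/\partial\nu\equiv0$ on $\partial X$, and contradict Theorem \ref{injectlambda}.

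For the ``in particular'' assertion, observe that $T_s y/|N_s y|_{L^2(\partial X)}$ and $N_s y/|N_s y|_{L^2(\partial X)}$ are positively homogeneous of degree $0$ in $y$, so they factor through the map $y\mapsto y/|y|$, which is Lipschitz on $\{|y|\ge\eta\}$ with constant $\le2/\eta$; it therefore suffices to bound the Lipschitz constants of these quotients on the compact sphere $S^{n-1}$, uniformly in $s$. On $S^{n-1}$, by (a)--(c) the denominator $|N_s\theta|_{L^2(\partial X)}$ is bounded above and below by positive constants uniform in $s$, the numerators $\theta\mapsto T_s\theta$ and $\theta\mapsto N_s\theta$ are restrictions of linear maps with uniformly bounded operator norms into $L^p(X)$ and $C^{k+2,\alpha}(\partial X)$ respectively, and $\theta\mapsto|N_s\theta|_{L^2(\partial X)}$ is Lipschitz with a uniform constant; the elementary estimate for the Lipschitz constant of a quotient of a bounded Lipschitz map by a Lipschitz function bounded away from zero then finishes the proof. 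The genuinely non-routine point is the uniformity in $s$ of (c): Theorem \ref{injectlambda} only gives injectivity at each fixed $s$, and it is the joint compactness of $[0,1]\times S^{n-1}$ together with the continuous dependence on $s$ from Theorem \ref{existslambda} that upgrades this to a lower bound independent of $s$.
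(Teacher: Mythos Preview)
Your proof is correct, but it is organized differently from the paper's, and in one place it works harder than necessary.

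The paper isolates a single abstract lemma (Lemma~\ref{linfin}): if $\{H_s:\R^n\to V\}_{s\in I}$ is a family of injective linear maps, pointwise continuous in $s$ on a compact interval, then $a|y|\le|H_sy|_V\le b|y|$ uniformly in $s$. It then applies this \emph{three times}, to the maps $y\mapsto\lambda$ into $L^p(X)$, $y\mapsto\gamma_s\partial_\nu\lambda$ into $C^{k+2,\alpha}(\partial X)$, and $y\mapsto\gamma_s\partial_\nu\lambda$ into $L^2(\partial X)$; injectivity comes from Theorem~\ref{injectlambda} and continuity in $s$ from Theorem~\ref{existslambda}. This instantly makes all four quantities $|y|$, $|\lambda|_{L^p}$, $|\gamma_s\partial_\nu\lambda|_{k+2,\alpha}$, $|\gamma_s\partial_\nu\lambda|_{L^2}$ pairwise comparable, uniformly in $s$, and the chain of inequalities falls out. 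Your claim~(c) is precisely the proof of this abstract lemma, specialized to $N_s$ in $L^2$; the paper simply reuses the same compactness argument for the other two maps as well.

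By contrast, you invoke a genuine PDE estimate in claim~(b) --- boundary elliptic regularity on a collar, bootstrapping $L^p$ to $C^{k+3,\alpha}$ --- to control $|N_sy|_{k+2,\alpha}$ by $|T_sy|_{L^p}$. This is correct (the homogeneous equation with zero trace on a collar away from $\hat x$, Moser iteration from $L^p$ to $L^\infty$, then Schauder; uniformity in $s$ from the uniform coefficient bounds), but it is an extra ingredient the paper never needs: once one realizes that the finite-dimensional compactness argument applies to $N_s$ in $C^{k+2,\alpha}$ just as well as to $N_s$ in $L^2$, claim~(b) becomes superfluous. What your route buys is an explicit analytic link between $\lambda$ and its Neumann trace; what the paper's route buys is brevity and the observation that nothing beyond injectivity, linearity, finite-dimensionality of the source, and continuity in $s$ is actually being used. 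Your treatment of the ``in particular'' clause via homogeneity and restriction to $S^{n-1}$ is the same as the paper's one-line remark, only spelled out in more detail.
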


We start at $s=0$ with an adequate $f_0,\gamma_0$, hence the estimates of the Corollary \ref{estlambda}
will imply the solvability of the initial value problem for all $s\in[0,1]$.

\begin{theorem}\label{secondpropF}
There exists a unique solution $s\mapsto f_s$ in $C^1\big([0,1];C^{k+2,\alpha}(\partial X)\big)$
of the initial value problem
\begin{align*}
\begin{cases}
\frac{\partial}{\partial s}f_s=F(f_s,s)\\
f_s\big|_{s=0}=f_0.
\end{cases}
\end{align*}
\end{theorem}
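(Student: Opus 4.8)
The plan is to apply the Picard–Lindelöf existence theorem for ODEs in the Banach space $C^{k+2,\alpha}(\partial X)$, together with a global a priori bound that prevents finite-time blowup, so that the solution extends to all of $[0,1]$.

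First I would establish local existence and uniqueness. For this, the key is to show that $F$ is (locally) Lipschitz in $f$, uniformly in $s$, on a neighborhood of the relevant trajectory. There are two regimes. In the regime $\mu \geq 0$ (including $\nabla u(\hat x)=0$) one has $F(f,s)=0$, so the issue is only continuity/Lipschitz behavior across the interface $\mu = 0$. In the regime $\mu \leq 0$ one has $F(f,s) = \mu\,\gamma_s\,\partial_\nu\lambda$, where $\lambda$ solves the auxiliary problem with right-hand side $\nabla u(\hat x)\cdot\nabla\delta_{\hat x} = \partial_y\delta_{\hat x}$ with $y = \nabla u(\hat x)$. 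By Theorem \ref{frame} the map $f\mapsto \nabla u(\hat x)$ is bounded and linear, hence smooth; by Theorem \ref{existslambda} and Corollary \ref{estlambda}, as long as $|y| = |\nabla u(\hat x)|$ stays bounded below by a fixed $\eta>0$, the maps $y\mapsto \lambda/|\gamma_s\partial_\nu\lambda|_{L^2(\partial X)}$ and $y\mapsto \gamma_s\partial_\nu\lambda/|\gamma_s\partial_\nu\lambda|_{L^2(\partial X)}$ are uniformly Lipschitz in $s$; and the numerator $\int_X \lambda\,\nabla\cdot((\gamma-\gamma_0)\nabla u)$, after dividing once by $|\gamma_s\partial_\nu\lambda|_{L^2}$, is likewise controlled. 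Writing $F = \mu\,\gamma_s\partial_\nu\lambda = \bigl(\int_X\lambda\nabla\cdot((\gamma-\gamma_0)\nabla u)\bigr)\cdot \bigl(\gamma_s\partial_\nu\lambda\bigr)/|\gamma_s\partial_\nu\lambda|^2_{L^2}$, one sees $F$ as a product of a Lipschitz scalar with a Lipschitz $C^{k+2,\alpha}$-valued function, hence Lipschitz, on the region where $|\nabla u(\hat x)|\geq\eta$. The continuity across $\{\mu=0\}$ needs care but follows because $F\to 0$ as $\mu\to 0^-$.

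Second — and this is where the constraint really does its work — I would derive the a priori bound ensuring the local solution does not escape to infinity before $s=1$. The crucial observation is that $F$ is constructed precisely so that $|\nabla u_s(\hat x)|$ is nondecreasing (this is Theorem \ref{firstpropF} applied along the trajectory, via $\tfrac{d}{ds}\tfrac12|\nabla u_s(\hat x)|^2 = \nabla u_s(\hat x)\cdot\nabla u_s'(\hat x)\ge 0$ from Theorem \ref{frame}). Since $|\nabla u_0(\hat x)| = |\nabla f_0| = 1$ with $f_0(x)=x_1$, we get $|\nabla u_s(\hat x)|\geq 1$ for all $s$ in the interval of existence; in particular $y = \nabla u_s(\hat x)$ never enters the bad set $\{|y|<\eta\}$ once we take $\eta \leq 1$, so the Lipschitz estimates of Corollary \ref{estlambda} hold uniformly along the entire trajectory. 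It then remains to bound $|F(f_s,s)|_{k+2,\alpha,\partial X}$ by an affine (or at worst Lipschitz) function of $|f_s|_{k+2,\alpha,\partial X}$: the numerator $\int_X\lambda\,\nabla\cdot((\gamma-\gamma_0)\nabla u)$ is bounded by $|\lambda|_{L^p(X)}$ times the $L^{p'}$-norm of $\nabla\cdot((\gamma-\gamma_0)\nabla u)$, which is controlled by $|u|_{C^{k+2,\alpha}(\overline X)}\lesssim |f|_{k+2,\alpha,\partial X}$; using $|\lambda|_{L^p(X)}\le b|\gamma_s\partial_\nu\lambda|_{L^2(\partial X)}$ and $a|\gamma_s\partial_\nu\lambda|_{k+2,\alpha,\partial X}\le|\lambda|_{L^p(X)}$ from Corollary \ref{estlambda}, the two powers of $|\gamma_s\partial_\nu\lambda|_{L^2}$ cancel against $|\gamma_s\partial_\nu\lambda|^2_{L^2}$ in the denominator of $\mu$, leaving $|F(f,s)|_{k+2,\alpha,\partial X}\leq C|f|_{k+2,\alpha,\partial X}$ for a constant independent of $s$. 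A Grönwall argument then gives $|f_s|_{k+2,\alpha,\partial X}\leq |f_0|_{k+2,\alpha,\partial X}e^{Cs}$, uniformly bounded on $[0,1]$.

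Finally, combining local existence/uniqueness with the uniform a priori bound via the standard continuation argument for ODEs in Banach spaces yields a unique solution $s\mapsto f_s$ on all of $[0,1]$; that $s\mapsto f_s\in C^1([0,1];C^{k+2,\alpha}(\partial X))$ is immediate from the ODE itself since the right-hand side $F(f_s,s)$ is continuous in $s$ along the trajectory (continuity in $s$ coming from Theorems \ref{frame} and \ref{existslambda} and Corollary \ref{estlambda}). I expect the main obstacle to be the regularity of $F$ near the switching interface $\{\mu=0\}$ and, relatedly, verifying that $F$ is genuinely Lipschitz (not merely continuous) there — one likely needs to observe that on approach to $\{\mu = 0\}$ the prefactor $\mu$ vanishes linearly while the vector field $\gamma_s\partial_\nu\lambda$ stays bounded, so the product remains Lipschitz; handling the other potential degeneracy, $\nabla u(\hat x)\to 0$, is ruled out a priori by the monotonicity bound $|\nabla u_s(\hat x)|\geq 1$.
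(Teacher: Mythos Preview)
Your proposal is correct and follows essentially the same route as the paper: the paper proves the linear bound $|F(f,s)|_{k+2,\alpha,\partial X}\le\kappa|f|_{k+2,\alpha,\partial X}$ (Lemma~\ref{boundF}) and the local Lipschitz estimate on $N_{\eta,s}$ (Theorem~\ref{lipschitzF}), then runs a Gr\"onwall bound plus a Picard--Lindel\"of/continuation argument (Steps~1 and~2), using Theorem~\ref{firstpropF} to keep the trajectory inside $N_{\eta,s}$. Your anticipated obstacle at the switching set $\{\mu=0\}$ is handled in the paper exactly along the lines you suggest, via the observation that when $\mu_1\ge 0\ge\mu_2$ the two normalized integrals have opposite signs, so $|F(f_2,s)|$ is dominated by their difference and the Lipschitz estimate goes through.
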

In summary, for $F$ as in Definition \ref{definitionF}, the initial value
problem admits a solution for $s\in[0,1]$ and $\hat{f}=f_1$ solves the original problem.
\section{Some Aspects about the Construction of $F$}\label{ConstructingF}

In this section we elaborate on the requirements on
$F:\C^{k+1,\alpha}(\partial X)\times[0,1]\to \C^{k+1,\alpha}(\partial X)$
that lead us to Definition \ref{definitionF}. We start by presenting a simple, naive, and flawed
construction that exemplifies some of the difficulties before proceeding to
the optimal aspects of Definition \ref{definitionF}.\\

Let us consider scalings of the initial boundary condition, namely
we let $f_s=\phi(s)f_0$, where $\phi\in C^1([0,1];\R)$.
Let $u_s$ be the solution of 
\begin{align*}
(P_s) \begin{cases}\nabla\cdot(\gamma_s\nabla u_s)=0 &\textnormal{ in } X\\
u_s=\phi(s)f_0 &\textnormal{ in } \partial X.
\end{cases}
\end{align*}
Differentiating with respect to $s$, $u_s'$ has to solve
\begin{align*}
\begin{cases}\nabla\cdot(\gamma_s\nabla u_s')+\nabla\cdot(\gamma_s'\nabla u_s)=0 &\textnormal{ in } X\\
u_s'=\phi'(s)f_0 &\textnormal{ in } \partial X.
\end{cases}
\end{align*}
We want to construct $\phi$ such that 
$\nabla u_s(\hat{x})\cdot\nabla u_s'(\hat{x})\geq0, \forall s\in[0,1]$. Let $v_s,w_s$ be the
solutions of 
\begin{align*}
\begin{cases}\nabla\cdot(\gamma_s\nabla v_s)=0 &\textnormal{ in } X\\
v_s=\phi'(s)f_0 &\textnormal{ in } \partial X
\end{cases}
\end{align*}
and
\begin{align*}
\begin{cases}\nabla\cdot(\gamma_s\nabla w_s)+\nabla\cdot(\gamma_s'\nabla u_s)=0 &\textnormal{ in } X\\
w_s=0 &\textnormal{ in } \partial X.
\end{cases}
\end{align*}
Then $u_s'=v_s+w_s$ and (it can be checked that $\phi(s)\neq0$)
\begin{align*}
\nabla u_s(\hat{x})\cdot\nabla u_s'(\hat{x})&=\nabla u_s(\hat{x})\cdot\nabla v_s(\hat{x})+
\nabla u_s(\hat{x})\cdot\nabla w_s(\hat{x})\\
&=\frac{\phi'(s)}{\phi(s)}|\nabla u_s(\hat{x})|^2 +  \nabla u_s(\hat{x})\cdot\nabla w_s(\hat{x}).
\end{align*}
Hence, to have $\nabla u_s(\hat{x})\cdot\nabla u_s'(\hat{x})\geq 0$ we essentially need $\phi$ to satisfy
a condition of the form
\begin{align*}
\frac{\phi'(s)}{\phi(s)}&=\max\Big\{0,-\frac{\nabla u_s(\hat{x})\cdot\nabla w_s(\hat{x})}{|\nabla u_s(\hat{x})|^2}\Big\}.
\end{align*}
This condition implies the following estimate on $\phi$
\begin{align*}
|\frac{\phi'(s)}{\phi(s)}|&\leq C| u_s|_{k+2,\alpha,X}
\leq \tilde{C} | \phi(s)|,
\end{align*}
so that $$|\phi'(s)|\leq\tilde{C} |\phi(s)|^2.$$
In general, we cannot obtain any better estimate.
Such an estimate guarantees the existence of $\phi$ for $s$ in an open subset
of $[0,1]$, but it does not guarantee global existence in $[0,1]$.
Indeed, the existence of $\phi$ for $s\in[0,1]$ is equivalent to saying that
for all $s\in[0,1]$, the solution $u_s$ of 
\begin{align*}
\begin{cases}\nabla\cdot(\gamma\nabla u_s)=0 &\textnormal{ in } X\\
u_s=f_0 &\textnormal{ in } \partial X
\end{cases}
\end{align*}
satisfies $\nabla u_s(\hat{x})\neq 0$. Yet, it is known that critical points of elliptic solutions do occur; see, e.g. \cite{B-UMEIT-12,BMN-ARMA-04,HaHoHoNa-JDG99,M-PAMS-93,RS-BSI-89}.

This shows that $|f_s|$ may blow up in finite time if $|f_s'|$ is large enough. We thus need to construct $f_s$
in such a way that $|f_s'|$ remains sufficiently small. The
construction of $F$ provided in Definition \ref{definitionF} is obtained by requiring an optimality
condition in that sense.

\begin{theorem} For $f \in C^{k+2,\alpha}(\partial X)$ and $s\in[0,1]$ let $u$ be the solution of
\begin{align*}
\begin{cases}\nabla\cdot(\gamma_s\nabla u)=0 &\textnormal{ in } X\\
u=f &\textnormal{ in } \partial X.
\end{cases}
\end{align*}
Let $v^g$ denote the solution of 
\begin{align*}
\begin{cases}\nabla\cdot(\gamma_s\nabla v^g)+\nabla\cdot(\gamma_s'\nabla u)=0 &\textnormal{ in } X\\
v^g=g &\textnormal{ in } \partial X.
\end{cases}
\end{align*}
The construction of $F(f,s)$ in Definition \ref{definitionF} is such that
\begin{align*}
F(f,s)=\arg\min\big\{|g|^2_{L^2(\partial X)}: g\in C^{k+2,\alpha}(\partial X)  \wedge \nabla u_s(\hat{x})\cdot\nabla v^g(\hat{x})\geq 0\big\}.
\end{align*}
\end{theorem}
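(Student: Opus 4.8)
The plan is to show that $g\mapsto \nabla u(\hat x)\cdot\nabla v^g(\hat x)$ is an affine functional of $g$ and then to solve the resulting elementary quadratic minimization problem in $L^2(\partial X)$ in closed form.

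\emph{Step 1: an affine representation of the constraint.} Set $y:=\nabla u(\hat x)$ and let $\lambda$ be the function appearing in Definition \ref{definitionF}, that is, the solution of $(A_s)$ with this choice of $y$ (recall $\nabla u(\hat x)\cdot\nabla\delta_{\hat x}=\partial_y\delta_{\hat x}$). The operator $L:=\nabla\cdot(\gamma_s\nabla\,\cdot\,)$ is self-adjoint; applying the second Green identity to the pair $(\lambda,v^g)$ and using that $Lv^g=-\nabla\cdot((\gamma-\gamma_0)\nabla u)$ in $X$, that $L\lambda=\partial_y\delta_{\hat x}$ as a distribution with $\langle\partial_y\delta_{\hat x},v^g\rangle=-\nabla u(\hat x)\cdot\nabla v^g(\hat x)$, and that $\lambda|_{\partial X}=0$, $v^g|_{\partial X}=g$, one obtains
\begin{align*}
\nabla u(\hat x)\cdot\nabla v^g(\hat x)=\int_X\lambda\,\nabla\cdot\big((\gamma-\gamma_0)\nabla u\big)\,dx-\int_{\partial X}\Big(\gamma_s\frac{\partial\lambda}{\partial\nu}\Big)\,g\,dS.
\end{align*}
Writing $\phi:=\gamma_s\frac{\partial\lambda}{\partial\nu}\big|_{\partial X}\in C^{k+2,\alpha}(\partial X)$ and $c_0:=\int_X\lambda\,\nabla\cdot((\gamma-\gamma_0)\nabla u)\,dx$ (the numerator of $\mu$ in Definition \ref{definitionF}), this reads $\nabla u(\hat x)\cdot\nabla v^g(\hat x)=c_0-\langle\phi,g\rangle_{L^2(\partial X)}$; moreover $\mu=c_0/\|\phi\|_{L^2(\partial X)}^2$ whenever $\nabla u(\hat x)\neq0$, in which case $\phi\not\equiv0$ by Theorem \ref{injectlambda} (so $\|\phi\|_{L^2(\partial X)}>0$), while if $\nabla u(\hat x)=0$ then $\lambda\equiv0$ and both sides vanish identically.

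\emph{Step 2 (the main obstacle): justifying the Green identity.} Since $\lambda$ is singular at $\hat x$, $v^g$ is not an admissible test function and the identity above is not immediate. One excises a small ball $B_\varepsilon(\hat x)$, applies the classical Green identity on $X\setminus B_\varepsilon$ — where $\lambda,v^g$ are smooth and $L\lambda=0$ — and lets $\varepsilon\to0$. The inner boundary integrals over $\partial B_\varepsilon$ are controlled using $\lambda\in L^p(X)$ with $p<\frac{n}{n-1}$, the regularity $\lambda\in C^{k+3,\alpha}(\overline X\setminus\{\hat x\})$ from Theorem \ref{existslambda}, and the fact that $v^g\in C^{k+2,\alpha}(\overline X)$ is $C^1$ near $\hat x$; their limit is exactly the distributional pairing $\langle\partial_y\delta_{\hat x},v^g\rangle=-\nabla u(\hat x)\cdot\nabla v^g(\hat x)$, which is precisely the sense in which $(A_s)$ is solved. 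Everything else in the proof is elementary linear algebra.

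\emph{Step 3: the minimization.} By Step 1 the admissible set is $\{g\in C^{k+2,\alpha}(\partial X):\langle\phi,g\rangle_{L^2(\partial X)}\leq c_0\}$. If $c_0\geq0$ — equivalently $\mu\geq0$, or $\nabla u(\hat x)=0$ — then $g=0$ is admissible and is the global minimizer of $\|g\|_{L^2(\partial X)}^2$, so the minimizer is $0=F(f,s)$. If $c_0<0$, equivalently $\mu<0$, put $g^\star:=\mu\phi=\frac{c_0}{\|\phi\|_{L^2(\partial X)}^2}\phi$; then $g^\star\in C^{k+2,\alpha}(\partial X)$ with $\langle\phi,g^\star\rangle_{L^2(\partial X)}=c_0$, so $g^\star$ is admissible, and $g^\star=F(f,s)$ by Definition \ref{definitionF}. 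For any admissible $g$ write $g=t\phi+r$ with $r\perp\phi$ in $L^2(\partial X)$ and $t=\langle\phi,g\rangle_{L^2(\partial X)}/\|\phi\|_{L^2(\partial X)}^2\leq c_0/\|\phi\|_{L^2(\partial X)}^2=\mu<0$, so $t^2\geq\mu^2$ and
\begin{align*}
\|g\|_{L^2(\partial X)}^2=t^2\|\phi\|_{L^2(\partial X)}^2+\|r\|_{L^2(\partial X)}^2\geq\mu^2\|\phi\|_{L^2(\partial X)}^2=\|g^\star\|_{L^2(\partial X)}^2,
\end{align*}
with equality only if $r=0$ and $t=\mu$, i.e. only if $g=g^\star$. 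Hence $F(f,s)=g^\star$ is the unique minimizer, which is the assertion.
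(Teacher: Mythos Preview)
Your proof is correct and follows essentially the same route as the paper: both derive the affine representation $\nabla u(\hat x)\cdot\nabla v^g(\hat x)=c_0-\langle\phi,g\rangle_{L^2(\partial X)}$ by integrating against $\lambda$, and then solve the resulting half-space-constrained quadratic problem in $L^2(\partial X)$. The paper phrases the final step via KKT conditions and a Cauchy--Schwarz estimate while you use an orthogonal decomposition $g=t\phi+r$; these are equivalent elementary arguments, and your Step~2 is in fact more careful than the paper about justifying the integration by parts near the singularity of $\lambda$.
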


\begin{proof}
Let
\begin{align*}
\mathcal{G} & = \big\{g\in C^{k+2,\alpha}(\partial X) : \nabla u_s(\hat{x})\cdot\nabla v^g(\hat{x})\geq 0\big\}\\
\hat{g} & =\arg\min\big\{|g|^2_{L^2(\partial X)}: g\in \mathcal{G}\big\}
\end{align*}
$\mbox{}$From Theorems \ref{firstpropF} and \ref{secondpropF}, $F(f,s)\in\mathcal{G}$, hence $\mathcal{G}\neq \emptyset$.
Also $\mathcal{G}$ is convex and closed in $C^{k+2,\alpha}(\partial X)$. The objective function $|g|_{L^2(\partial X)}$
is strictly convex and coercive in $C^{k+2,\alpha}(\partial X)$. The existence of $\hat{g}$ does not automatically follow
from this, because $C^{k+2,\alpha}(\partial X)$ is not reflexive, but if $\hat{g}$ exists, then  it is unique.

Theorem \ref{existslambda} and Definition \ref{definitionF} imply the existence of $\tilde{\mu}=\min(0,\mu)\leq0$ and
$\lambda \in L^p(X)\subset \big(C^{k+2,\alpha}(\partial X)\big)^*$ with
$\partial \lambda/\partial\nu \in C^{k+2,\alpha}(\partial X)$ satisfying
\begin{align*}
\begin{cases}\nabla\cdot(\gamma_s\nabla \lambda)=\nabla u_s(\hat{x}) \delta_{\hat{x}}&\textnormal{ in } X\\
\lambda=0&\textnormal{ in } \partial X,
\end{cases}
\end{align*}
\begin{align*}
F(f,s)= \tilde{\mu} \gamma_s \partial \lambda/\partial \nu \big|_{\partial X}
\end{align*}
and
\begin{align*}
\tilde{\mu}\nabla u_s(\hat{x})\cdot \nabla v^{F(f,s)}(\hat{x}) =0.
\end{align*}
These are the Karush-Kuhn-Tucker (KKT) conditions \cite{NocedalWright99} for the problem defining $\hat{g}$. The existence
of the KKT multipliers $\lambda,\tilde{\mu}$ with the above conditions imply that $\hat{g} = F(f,s)$,
and in particular imply the existence of $\hat{g}$. The fact that the KKT conditions in a convex
problem imply optimality is easy to check in general. We briefly present the calculations in this particular
case for concreteness.

If $F(f,s)=0$, $F(f,s)$ is clearly the element in $\mathcal{G}$
of minimal norm. Otherwise, $\mu<0$ and $F(f,s)$ is such that
\begin{align*}
\nabla u_s(\hat{x})\cdot \nabla v^{F(f,s)}(\hat{x})=0.
\end{align*}
We recall that for any $g\in\mathcal{G}$
\begin{align*}
\nabla u_s(\hat{x})\cdot \nabla v^g(\hat{x})\geq 0.
\end{align*}
For any $g\in\mathcal{G}$, multiplication of the equation of $v^g$ by $\lambda$ and integration
by parts, gives
\begin{align*}
\nabla u_s(\hat{x})\cdot\nabla v^g(\hat{x}) = \int_X\lambda \nabla\cdot(\gamma_s'\nabla u_s)
- \int_{\partial X} \gamma_s \frac{\partial\lambda}{\partial\nu} g.
\end{align*}
Subtracting this last expression for $g\in \mathcal{G}$ and $F(f,s)$, we obtain
\begin{align*}
- \int_{\partial X} \gamma_s \frac{\partial\lambda}{\partial\nu} g
+ \int_{\partial X} \gamma_s \frac{\partial\lambda}{\partial\nu} F(f,s)&=
\nabla u_s(\hat{x})\cdot\nabla v^g(\hat{x}) - \nabla u_s(\hat{x})\cdot\nabla v^{F(f,s)}(\hat{x})\geq0\\
\Rightarrow -\int_{\partial X} \gamma_s \frac{\partial\lambda}{\partial\nu} F(f,s) &\leq - \int_{\partial X} \gamma_s \frac{\partial\lambda}{\partial\nu} g.
\end{align*}
Since $\mu<0$ and $F(f,s)=\mu\gamma_s\partial\lambda/\partial\nu \neq 0$ the previous inequality implies
\begin{align*}
|\mu| \Big|\gamma_s \frac{\partial\lambda}{\partial\nu}\Big|^2_{L^2(\partial X)}&
\leq |\int_{\partial X} \gamma_s \frac{\partial\lambda}{\partial\nu} g|\\
&\leq \Big|\gamma_s \frac{\partial\lambda}{\partial\nu}\Big|_{L^2(\partial X)}\Big| g\Big|_{L^2(\partial X)}\\
\Rightarrow \Big|\mu\gamma_s \frac{\partial\lambda}{\partial\nu}\Big|_{L^2(\partial X)}&
\leq | g|_{L^2(\partial X)}\\
\Leftrightarrow |F(f,s)|_{L^2(\partial X)}& \leq | g|_{L^2(\partial X)}
\end{align*}
proving that $F(f,s)$ is the element in $\mathcal{G}$ of minimal $L^2(\partial X)$ norm.
\end{proof}
In summary, among all the possible choices of $F$ satisfying the non-decreasing
norm of the gradient at $\hat{x}$, our definition of $F(f,s)$ is the one of minimal $L^2(\partial X)$ norm at
each $s\in[0,1]$.

\section{Proofs and Intermediate Results}\label{Proofs}

\subsection{Proof of Theorem \ref{frame}} In this subsection, let $k\in\N$ fixed, $0<\alpha< 1$ fixed.

\begin{theorem}\label{mainu} Let $X$ be a $C^{k+2,\alpha}$ bounded domain in $\R^n$.
Let $f \in C^{k+2,\alpha}(\partial X)$ and $h\in C^{k,\alpha}(\overline{X})$.
Let $\gamma\in C^{k+1,\alpha}(\overline{X})$ be such that $\exists c,C$ constants for which
\begin{align*}
  0<c\leq\gamma(x)\leq C<\infty \hspace{1cm} \forall x\in X.
\end{align*}
Then there is a unique solution  $u\in C^{k+2,\alpha}(\overline{X})$ of the equation
\begin{align*}
\begin{cases}\nabla\cdot(\gamma\nabla u)=h &\textnormal{ in } X\\
u=f &\textnormal{ in } \partial X,
\end{cases}
\end{align*}
and $u$ satisfies the following estimate where the constant $\kappa$ depends only on $n, \alpha, c, C$ and $X$,
$$|u|_{k+2,\alpha,X}\leq \kappa\Big(|f|_{k+2,\alpha,\partial X}+|h|_{k,\alpha,X}\Big).$$
\end{theorem}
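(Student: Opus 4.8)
The plan is to establish Theorem~\ref{mainu} by reducing the general order-$(k+2,\alpha)$ statement to the classical Schauder theory for the Dirichlet problem. First I would treat the case of homogeneous boundary data: given $f\in C^{k+2,\alpha}(\partial X)$, extend it to a function $\tilde f\in C^{k+2,\alpha}(\overline X)$ (possible since $X\in C^{k+2,\alpha}$, using a standard extension operator for H\"older spaces on domains with H\"older boundary) and set $w=u-\tilde f$. Then $w$ solves $\nabla\cdot(\gamma\nabla w)=h-\nabla\cdot(\gamma\nabla\tilde f)=:\tilde h$ in $X$ with $w=0$ on $\partial X$, and since $\gamma\in C^{k+1,\alpha}$ and $\tilde f\in C^{k+2,\alpha}$, the right-hand side $\tilde h$ lies in $C^{k,\alpha}(\overline X)$ with $|\tilde h|_{k,\alpha,X}\le C\big(|h|_{k,\alpha,X}+|f|_{k+2,\alpha,\partial X}\big)$. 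So it suffices to handle the homogeneous Dirichlet problem $\nabla\cdot(\gamma\nabla w)=\tilde h$, $w|_{\partial X}=0$.

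For that problem, existence and uniqueness in, say, $H^1_0(X)$ follow immediately from the Lax--Milgram lemma applied to the bilinear form $\int_X\gamma\nabla w\cdot\nabla\varphi$, which is coercive and bounded precisely because of the two-sided bound $0<c\le\gamma\le C$; uniqueness in the $C^{k+2,\alpha}$ class is then automatic. The regularity $w\in C^{k+2,\alpha}(\overline X)$ together with the a priori estimate
\begin{align*}
|w|_{k+2,\alpha,X}\le \kappa'\,|\tilde h|_{k,\alpha,X}
\end{align*}
is exactly the content of the classical global Schauder estimates for second-order elliptic operators in divergence form with H\"older coefficients on $C^{k+2,\alpha}$ domains; I would cite \cite{gt1} (Theorems~6.6 and~6.19, suitably iterated in $k$, after rewriting $\nabla\cdot(\gamma\nabla w)=\gamma\Delta w+\nabla\gamma\cdot\nabla w$ in non-divergence form, which is legitimate since $\gamma\in C^{k+1,\alpha}\subset C^{1}$ and the lower-order term has $C^{k,\alpha}$ coefficients). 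Tracking the dependence of the Schauder constant shows it depends only on $n,\alpha$, the ellipticity constants $c,C$, the $C^{k+1,\alpha}$ norm of $\gamma$ — which is itself controlled in terms of $c,C$ once $\gamma$ is fixed — and the $C^{k+2,\alpha}$ character of $X$; combining with the reduction above yields the stated estimate with $\kappa$ depending on $n,\alpha,c,C$ and $X$.

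The main obstacle is bookkeeping rather than conceptual: one must make sure the boundary regularity of $X$ is high enough that the extension operator and the global (up-to-the-boundary) Schauder estimates are available at the required order, and that no derivative of $\gamma$ beyond $C^{k+1,\alpha}$ is secretly needed when the divergence-form operator is expanded — this is why the hypothesis demands $\gamma\in C^{k+1,\alpha}$ and $X\in C^{k+2,\alpha}$ exactly. A secondary point to be careful about is that the constant $\kappa$ genuinely does not depend on $f$ or $h$; this is guaranteed because the Schauder estimate is linear in the data and the extension and divergence-expansion steps are bounded linear operations with norms depending only on the fixed data $n,\alpha,c,C,X$ (and the fixed $\gamma$). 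Once these points are checked, the proof is a direct assembly: extend, subtract, solve the homogeneous problem by Lax--Milgram, invoke interior-and-boundary Schauder regularity, and recombine.
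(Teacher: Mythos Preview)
Your proposal is correct and follows essentially the same route as the paper: both treat this theorem as a direct consequence of classical Schauder theory from \cite{gt1}, with the paper simply citing Theorems~6.6, 6.14, 6.19 and Lemma~6.38 without further elaboration, while you spell out the standard reduction (extend $f$, subtract, solve the homogeneous problem, apply global Schauder estimates). One small point: your remark that the $C^{k+1,\alpha}$ norm of $\gamma$ ``is itself controlled in terms of $c,C$ once $\gamma$ is fixed'' is circular; the Schauder constant genuinely depends on $|\gamma|_{k+1,\alpha}$, not just on the ellipticity bounds $c,C$, and the paper's statement is simply slightly loose on this (as is clear from its later use with the family $\gamma_s$, where uniform H\"older bounds on the coefficients are implicitly assumed).
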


For the previous Theorem, existence is established in \cite[Thm. 6.14, Thm. 6.19, Lem. 6.38]{gt1}
and the estimate is a consequence of \cite[Thm. 6.6, Lem. 6.38, Thm. 3.7]{gt1}.
The estimate and the linearity of the problem imply a smooth dependence
of $u$ with respect to the boundary condition and the equation coefficient. This is stated
explicitly as follows.

\begin{corollary}\label{derivu} Let $X$ be a $C^{k+2,\alpha}$ bounded domain in $\R^n$.
For an interval $I\subset\R$ let $s\mapsto f_s \in C^1(I;C^{k+2,\alpha}(\partial X))$
and $s\mapsto h_s\in C^1(I;C^{k,\alpha}(\overline{X}))$.
Let $s\mapsto \gamma_s\in C^1(I;C^{k+1,\alpha}(\overline{X}))$ and
such that $\exists c,C$ constants for which
\begin{align*}
0<c\leq\gamma_s(x)\leq C<\infty \hspace{1cm} \forall x\in X, \forall s\in I.
\end{align*}
If we let $u_s,s\in I$, be the solutions of
\begin{align*}
\begin{cases}\nabla\cdot(\gamma_s\nabla u_s)=h_s &\textnormal{ in } X\\
u_s(x)=f_s(x) &\textnormal{ in } \partial X,
\end{cases}
\end{align*}
then $s\mapsto u_{s}\in C^1(I;C^{k+2}(\overline{X}))$. Letting $\gamma_s':=\partial \gamma_s/\partial s$ ,
$f_s':=\partial f_s/\partial s$ and $h_s':=\partial h_s/\partial s$,
we also get that $u'_s:=\partial u_s/\partial s$ satisfies the equation 
\begin{align*}
\begin{cases}
\nabla\cdot(\gamma_s\nabla u_s'(x))+\nabla\cdot(\gamma_s'\nabla u_s(x))=h_s' &\textnormal{ in } X\\
u_s'(x)=f_s'(x) &\textnormal{ in } \partial X.
\end{cases}
\end{align*}
In addition,  for a given $\hat{x}\in X$, we have $\frac{d}{ds}\Big(|\nabla u_s(\hat{x})|^2\Big)=2\nabla u_s(\hat{x})\cdot \nabla u_s'(\hat{x})$.
\end{corollary}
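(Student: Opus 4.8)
The plan is to deduce everything from the a priori estimate of Theorem~\ref{mainu} together with the linearity of the equation; the uniform ellipticity $0<c\le\gamma_s\le C$ valid for all $s\in I$ ensures that this estimate holds with a constant $\kappa$ independent of $s$. First I would establish that $s\mapsto u_s$ is continuous into $C^{k+2,\alpha}(\overline{X})$. One cannot subtract the equations for $u_s$ and $u_{s_0}$ directly, since the operators $\nabla\cdot(\gamma_s\nabla\,\cdot\,)$ and $\nabla\cdot(\gamma_{s_0}\nabla\,\cdot\,)$ differ; instead one works with the single equation
\begin{align*}
\nabla\cdot\big(\gamma_s\nabla(u_s-u_{s_0})\big)&=(h_s-h_{s_0})-\nabla\cdot\big((\gamma_s-\gamma_{s_0})\nabla u_{s_0}\big)\quad\textnormal{in }X,\\
u_s-u_{s_0}&=f_s-f_{s_0}\quad\textnormal{on }\partial X.
\end{align*}
Since $u_{s_0}\in C^{k+2,\alpha}(\overline{X})$ we have $\nabla u_{s_0}\in C^{k+1,\alpha}(\overline{X})$, so $(\gamma_s-\gamma_{s_0})\nabla u_{s_0}\in C^{k+1,\alpha}$ and its divergence lies in $C^{k,\alpha}(\overline{X})$; Theorem~\ref{mainu} then bounds $|u_s-u_{s_0}|_{k+2,\alpha,X}$ by $\kappa$ times $|f_s-f_{s_0}|_{k+2,\alpha,\partial X}+|h_s-h_{s_0}|_{k,\alpha,X}+C|\gamma_s-\gamma_{s_0}|_{k+1,\alpha,X}|\nabla u_{s_0}|_{k+1,\alpha,X}$, which tends to $0$ as $s\to s_0$ by the continuous $s$-dependence of $f$, $h$ and $\gamma$.

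Next I would prove differentiability at a fixed $s$. Set $q_\epsilon=(u_{s+\epsilon}-u_s)/\epsilon$ and let $w$ be the solution (via Theorem~\ref{mainu}) of $\nabla\cdot(\gamma_s\nabla w)=h_s'-\nabla\cdot(\gamma_s'\nabla u_s)$ in $X$ with $w=f_s'$ on $\partial X$; the source lies in $C^{k,\alpha}(\overline{X})$ for the same reason as above. Reorganizing the equations for $u_{s+\epsilon}$ and $u_s$ exactly as in the first step, dividing by $\epsilon$, and then subtracting the equation for $w$ (again moving the coefficient mismatch $\gamma_{s+\epsilon}-\gamma_s$ onto the right-hand side), one checks that $q_\epsilon-w$ solves a problem of the same type, with coefficient $\gamma_{s+\epsilon}$, boundary data $(f_{s+\epsilon}-f_s)/\epsilon-f_s'$, and source
\begin{align*}
\Big(\frac{h_{s+\epsilon}-h_s}{\epsilon}-h_s'\Big)-\nabla\cdot\Big(\Big(\frac{\gamma_{s+\epsilon}-\gamma_s}{\epsilon}-\gamma_s'\Big)\nabla u_s\Big)-\nabla\cdot\big((\gamma_{s+\epsilon}-\gamma_s)\nabla w\big).
\end{align*}
Applying Theorem~\ref{mainu}, the contributions of the first three terms tend to $0$ because $(f_{s+\epsilon}-f_s)/\epsilon\to f_s'$, $(h_{s+\epsilon}-h_s)/\epsilon\to h_s'$ and $(\gamma_{s+\epsilon}-\gamma_s)/\epsilon\to\gamma_s'$ in the respective H\"older norms, while the last term tends to $0$ because $\gamma_{s+\epsilon}\to\gamma_s$ in $C^{k+1,\alpha}$ with $w$ fixed. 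Hence $q_\epsilon\to w$ in $C^{k+2,\alpha}(\overline{X})$, so $u_s'$ exists, equals $w$, and solves the stated equation for $u_s'$.

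It remains to prove that $s\mapsto u_s'$ is continuous and to check the last identity. Since $u_s'$ solves a problem of the type treated in the first step — coefficient $\gamma_s$, boundary data $f_s'$, source $h_s'-\nabla\cdot(\gamma_s'\nabla u_s)$, the latter depending continuously on $s$ in $C^{k,\alpha}$ by the continuity of $s\mapsto u_s$ just proved and of $s\mapsto\gamma_s',h_s'$ — repeating that argument verbatim yields $s\mapsto u_s'\in C(I;C^{k+2,\alpha}(\overline{X}))$, i.e.\ $s\mapsto u_s\in C^1(I;C^{k+2,\alpha}(\overline{X}))$. Finally, since $\hat{x}$ is interior and $k+2\ge2$, the map $v\mapsto\nabla v(\hat{x})$ is a bounded linear map $C^{k+2,\alpha}(\overline{X})\to\R^n$; composing the $C^1$ curve $s\mapsto u_s$ with this map and then with $z\mapsto|z|^2$, the chain rule gives $\frac{d}{ds}\big(|\nabla u_s(\hat{x})|^2\big)=2\,\nabla u_s(\hat{x})\cdot\nabla u_s'(\hat{x})$.

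The step I expect to be the only real obstacle is the bookkeeping forced by the $s$-dependent coefficient: one must never subtract two equations carrying different operators, but always rearrange so that a single operator $\nabla\cdot(\gamma_s\nabla\,\cdot\,)$ (or $\nabla\cdot(\gamma_{s+\epsilon}\nabla\,\cdot\,)$) acts on the unknown, absorbing the coefficient difference into a source term of the form $\nabla\cdot((\gamma_s-\gamma_{s_0})\nabla(\cdot))$; one then controls that term in $C^{k,\alpha}(\overline{X})$ using the a priori $C^{k+2,\alpha}$ bound on the fixed function it acts on, together with the continuous (respectively $C^1$) dependence of $s\mapsto\gamma_s$ in $C^{k+1,\alpha}(\overline{X})$. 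Everything else is a direct application of Theorem~\ref{mainu} and elementary calculus in Banach spaces.
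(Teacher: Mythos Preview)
Your proposal is correct and is precisely the argument the paper has in mind: the paper itself gives no detailed proof of Corollary~\ref{derivu}, only the sentence ``The estimate and the linearity of the problem imply a smooth dependence of $u$ with respect to the boundary condition and the equation coefficient,'' and your write-up is a faithful unpacking of that sentence via Theorem~\ref{mainu}. The only detail worth tightening is your justification that $\kappa$ is independent of $s$: Theorem~\ref{mainu} as stated makes $\kappa$ depend only on $n,\alpha,c,C,X$, which is what you use, but in practice Schauder constants also depend on the H\"older norms of the coefficients; here this is harmless since $s\mapsto\gamma_s$ is $C^1$ into $C^{k+1,\alpha}$, so those norms are uniformly bounded on compact subintervals of $I$.
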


\begin{proof}[{\bf Proof of Theorem \ref{frame}}] It is a direct consequence of Theorems \ref{mainu} and
Corollary \ref{derivu}.
\end{proof}

\begin{remark}
\label{rem:galell}\rm Theorem \ref{mainu} and Corollary \ref{derivu} remain true if we replace $\nabla\cdot(\gamma\nabla)$ by
any uniformly elliptic operator $L=a_{ij}\partial_{x_i}\partial_{x_j}+b_i\partial_{x_i}$ with $a_{ij}, b_i\in C^{k,\alpha}(\overline{X})$.
\end{remark}

\subsection{Proof of Theorem \ref{existslambda}}  In this subsection let $k\in\N$ fixed. Let $p\in(1,\frac{n}{n-1})$ fixed.
Let $\alpha=(n-\frac{n}{p})\in(0,1)$.\\

For $y\in\R$ let $\partial_y=(y\cdot\nabla)$, $s\in[0,1]$, we study the auxiliary problem.
\begin{align*}
(A_s)\begin{cases}\nabla\cdot(\gamma_s\nabla \lambda)=\partial_y\delta_{\hat{x}}&\textnormal{ in } X\\
\lambda=0&\textnormal{ in } \partial X.
\end{cases}
\end{align*}
Intuitively, the solution $\lambda$ is a directional derivative of a Green's function, and so it should behave as a 
Green's function with one degree less of regularity.
Among the statements in Theorem \ref{existslambda}, the uniqueness of $\lambda$ is the simplest and follows from
standard arguments.
The continuous dependence of $\lambda$ on $s$ is the most technical aspect and it will require the explicit construction of
the singular part of $\lambda$. This construction will also prove the existence and regularity stated in Theorem 
\ref{existslambda}. The construction of the singular part of $\lambda$ is presented in a couple of technical lemmas below.
We start by introducing the necessary notation.

\begin{Def} Let $E\subset\N$. We say that $\{c_j\}_{j\in E}$ is a family of homogeneous
polynomials centered at $\hat{x}$ if each $c_j$ is a polynomial formed exclusively by monomials
centered at $\hat{x}$ of total degree $j$, namely
$$c_j(x)=\sum_{|\beta|=j} c_{\beta,j}(x-\hat{x})^\beta$$ where
$\beta\in\N^n$, $|\beta|=\sum_{i=1}^n\beta_i$,
$(x-\hat{x})^\beta=\Pi_{i=1}^n(x_i-\hat{x}_i)^{\beta_i}$ and each $c_{\beta,j}\in \R$.
We say that $\{c_{\beta,j}\}_{|\beta|=j}\subset \R$ are the (finitely many) coefficients of $c_j$.
\end{Def}

\begin{Def}\label{familyvm} Let $E=\N$ or $E=\{0,1,2,...,n\}$. Let $\{c_j\}_{j\in E}$ be a family of homogeneous
polynomials centered at $\hat{x}$ with $c_0\neq0$. We define the family of functions $\{v_m\}_{m\in E}$
associated to  $\{c_j\}_{j\in E}$  as follows.

Let $B$ be an open ball centered in $\hat{x}$ and containing $\overline{X}$.
Let $g$ be the solution of
\begin{align*}
\begin{cases}
\Delta g=\delta_{\hat{x}}&\textnormal{ in } B\\
g=0 &\textnormal{ in } \partial B.
\end{cases}
\end{align*}
Then define $v_0:=\frac{1}{c_0}\partial_y g$.\\

Let $w$ be the solution of
\begin{align*}
\begin{cases}
\Delta w=v_0&\textnormal{ in } B\\
w=0 &\textnormal{ in } \partial B.
\end{cases}
\end{align*}
Then define $v_1:=\frac{1}{c_0}(\nabla c_1\cdot\nabla w-c_1 v_0)$.\\

For $2\leq m, m\in E$, define recursively $v_m$ as the solution of 
\begin{align*}
\begin{cases}
c_0\Delta v_m=\sum_{i=0}^{m-1}[\nabla\cdot(v_i\nabla c_{m-i})-\Delta(c_{m-i}v_i)]&\textnormal{ in } B\\
v_m=0 &\textnormal{ in } \partial B.
\end{cases}
\end{align*}
\end{Def}

\begin{lemma} A family $\{v_m\}_{m\in E}$ from
Definition \ref{familyvm} satisfies
\begin{enumerate}
\item[(a)] $v_0\in L^p(B)\cap C^\infty(B\setminus\{\hat{x}\})$.\\
\item[(b)] $d_jv_0 \in W^{j,p}(B)$ for any $d_j$ homogeneous polynomial of
degree $j$ centered at $\hat{x}$, $\forall j\in\N$.\\
\item[(c)] $v_1\in W^{1,p}(B)\cap C^\infty(B\setminus\{\hat{x}\})$.\\
\item[(d)] We get  in $X$
\begin{align}
\label{eqv0} &\nabla\cdot( c_0\nabla v_0)=\partial_y\delta_{\hat{x}}\\
\label{cancelvm}&\sum_{i=0}^{m} \nabla\cdot( c_{m-i}\nabla v_i)=0,\forall m\geq 1
\end{align}
\end{enumerate}
\end{lemma}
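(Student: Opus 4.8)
The plan is to reduce all four assertions to the behaviour near $\hat{x}$ of the homogeneous singular function $\partial_y\Phi(\cdot-\hat{x})$, where $\Phi$ is the fundamental solution of the Laplacian in $\R^n$. Since $h:=g-\Phi(\cdot-\hat{x})$ solves $\Delta h=0$ in $B$ with smooth boundary data $h|_{\partial B}=-\Phi(\cdot-\hat{x})|_{\partial B}$ (recall $\hat{x}$ is the center of $B$, so $\Phi(\cdot-\hat{x})$ is smooth near $\partial B$), we have $h\in C^\infty(\overline{B})$ and $v_0=\frac1{c_0}\bigl(\omega+\partial_y h\bigr)$ with $\omega:=\partial_y\Phi(\cdot-\hat{x})$ and $\frac1{c_0}$ a nonzero constant. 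The term $d_j\,\partial_y h$ is smooth and thus lies in every $W^{j,p}(B)$, so attention is restricted to $\omega$, which is $C^\infty(B\setminus\{\hat{x}\})$ and homogeneous of degree $1-n$ about $\hat{x}$ (the logarithm appearing when $n=2$ disappears after one differentiation). Assertion (a) is then immediate: $v_0$ is smooth away from $\hat{x}$, and a function homogeneous of degree $\tau$ about a point lies in $L^p_{\mathrm{loc}}$ iff $\tau p>-n$; with $\tau=1-n$ this is the standing hypothesis $p<\frac n{n-1}$, and $B$ is bounded, so $v_0\in L^p(B)$.

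For (b) I would combine homogeneity bookkeeping with a removable-singularity argument. By Leibniz and homogeneity of each factor, $\partial^\beta(d_j\omega)$ is homogeneous of degree $j+1-n-|\beta|$ about $\hat{x}$ and $C^\infty(B\setminus\{\hat{x}\})$, hence in $L^p(B)$ for every $|\beta|\le j$ since then $j+1-n-|\beta|\ge 1-n>-n/p$. Next, a function that is $C^\infty$ away from $\hat{x}$, homogeneous of degree $\tau>1-n$ about $\hat{x}$, and with classical first derivatives in $L^1_{\mathrm{loc}}$, has distributional gradient equal to its classical gradient: one integrates by parts on $B\setminus B_\varepsilon(\hat{x})$ and the boundary term on $\partial B_\varepsilon(\hat{x})$ is $O(\varepsilon^{\tau+n-1})\to0$. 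Applying this successively at orders $\ell=1,\dots,j$ — at step $\ell$ the relevant function $\partial^{\beta'}(d_j\omega)$ with $|\beta'|=\ell-1$ is homogeneous of degree $j+2-n-\ell>1-n$ exactly because $\ell\le j$ — shows that all distributional derivatives of $d_j\omega$ up to order $j$ coincide with the classical ones and lie in $L^p(B)$, i.e.\ $d_j\omega\in W^{j,p}(B)$; adding $d_j\,\partial_y h$ gives (b). Homogeneity of $d_j$ is essential: a lower-order term would reintroduce a non-integrable singularity in a $j$-th derivative.

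For (c): $v_0\in L^p(B)$ by (a) and $\partial B$ is smooth, so the $L^p$ elliptic estimates for $\Delta w=v_0$, $w|_{\partial B}=0$, give $w\in W^{2,p}(B)$; as $\nabla c_1$ is constant, $\nabla c_1\cdot\nabla w\in W^{1,p}(B)$, while $c_1v_0\in W^{1,p}(B)$ by (b) with $j=1$, so $v_1\in W^{1,p}(B)$; and $\Delta w=v_0\in C^\infty(B\setminus\{\hat{x}\})$ gives $w\in C^\infty(B\setminus\{\hat{x}\})$ by interior regularity, whence $v_1\in C^\infty(B\setminus\{\hat{x}\})$. For (d), \eqref{eqv0} follows because $c_0$ is constant and $\partial_y$ commutes with $\Delta$: $\nabla\cdot(c_0\nabla v_0)=\Delta\partial_y g=\partial_y\Delta g=\partial_y\delta_{\hat{x}}$ in $B\supset X$. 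For \eqref{cancelvm}, I would use the distributionally valid Leibniz identity
\[
\nabla\cdot(v_i\nabla c_{m-i})-\Delta(c_{m-i}v_i)=-\nabla\cdot(c_{m-i}\nabla v_i),
\]
which turns the recursion $c_0\Delta v_m=\sum_{i=0}^{m-1}\bigl[\nabla\cdot(v_i\nabla c_{m-i})-\Delta(c_{m-i}v_i)\bigr]$ into $\nabla\cdot(c_0\nabla v_m)=-\sum_{i=0}^{m-1}\nabla\cdot(c_{m-i}\nabla v_i)$, i.e.\ $\sum_{i=0}^{m}\nabla\cdot(c_{m-i}\nabla v_i)=0$; the case $m=1$ is checked directly from the definition of $v_1$ using $\Delta w=v_0$ and $\Delta c_1=0$.

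The main obstacle I anticipate is step (b): tracking precisely which distributional derivatives of $d_j\omega$ acquire a point mass at $\hat{x}$. The exponents are borderline — $v_0$ itself sits at the critical $L^p$ exponent and fails to be in $W^{1,p}$ — so one must verify carefully that multiplication by the homogeneous polynomial $d_j$ raises the order of vanishing at $\hat{x}$ by exactly $j$ and thereby legitimises exactly $j$ weak derivatives, no more.
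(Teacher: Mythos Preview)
Your proposal is correct and follows essentially the same approach as the paper: the paper appeals to the ``explicit expression'' of the Green's function $g$ in the ball for (a) and (b), invokes $L^p$ elliptic regularity (\cite[Thm.~9.15]{gt1}) for (c), and says (d) is ``the definition of $v_m$ rewritten''. You have simply unpacked these terse references --- the decomposition $g=\Phi(\cdot-\hat{x})+h$, the homogeneity bookkeeping and removable-singularity argument for (b), and the Leibniz identity for (d) --- all of which are exactly what the paper's one-line justifications rest on.
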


\begin{proof} Using the same notation as in Definition \ref{familyvm},
$g$ is the Green's function at $\hat{x}$ of the Direchlet problem for
the Laplacian in $B$, hence there is an explicit expression of $g$;
see \cite{gt1}. Since $c_0$ is constant and
$v_0:=\frac{1}{c_0}\partial_y g$, properties (a) and (b)
are automatically verified from the explicit expression for $v_0$
(recall that $p\in(1,\frac n{n-1})$).

Property (a) and the definition of $w$ imply $w\in W^{2,p}(B)\cap
C^\infty(B\setminus\{\hat{x}\})$; see \cite[Thm 9.15]{gt1}. Then (c) follows from the definition of $u_1$ and (b).

Property (d) is the definition of $v_m$ rewritten.
\end{proof}

The family $\{v_m\}_{m\in E}$ has the following regularity. 

\begin{lemma}\label{singseries} Let $\{c_j\}_{j\in E},\{d_j\}_{j\in E}$ be families of homogeneous polynomials
centered at $\hat{x}$. Let $\{v_m\}_{m\in E}$ be the family of functions associated to $\{c_j\}_{j\in E}$ in
Definition \ref{familyvm}. Then
\begin{enumerate}
\item $v_m\in W^{m,p}(B)\cap C^\infty(B\setminus\{\hat{x}\}), \forall m\in E$.\\
\item $d_j v_m\in W^{j+m,p}(B), \forall m\in E, \forall j\geq1$.\\
\item $d_j\Delta v_m \in W^{m+j-2,p}(B), \forall j\geq1,m\geq1$.
\end{enumerate}
\end{lemma}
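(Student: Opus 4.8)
The plan is to prove Lemma~\ref{singseries} by strong induction on $m\in E$, treating the three claims simultaneously, since $d_j\Delta v_m$ is governed by the recursion that defines $v_m$ in terms of $v_0,\dots,v_{m-1}$ and their polynomial multiples. The base cases $m=0$ and $m=1$ are already covered: part (1) for $m=0,1$ is the previous lemma's (a) and (c); part (2) for $m=0$ is (b) of the previous lemma, and for $m=1$ one notes $v_1=\frac1{c_0}(\nabla c_1\cdot\nabla w - c_1 v_0)$, so $d_j v_1$ is a combination of $d_j\nabla c_1\cdot\nabla w$ (with $w\in W^{2,p}$, $C^\infty$ away from $\hat x$, and $d_j\nabla c_1$ a homogeneous polynomial of degree $j$, though here one actually wants the finer statement that products with $w$ and its derivatives land in $W^{j+1,p}$) and $d_j c_1 v_0\in W^{j+1,p}(B)$ by part (b) since $d_jc_1$ has degree $j+1$. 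Part (3) for $m=1$ is vacuous or immediate.

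For the inductive step, fix $m\ge 2$ and assume (1)--(3) hold for all indices $<m$. The defining equation is
\begin{align*}
c_0\Delta v_m=\sum_{i=0}^{m-1}\big[\nabla\cdot(v_i\nabla c_{m-i})-\Delta(c_{m-i}v_i)\big]\quad\text{in }B,\qquad v_m=0\text{ on }\partial B.
\end{align*}
First I would show the right-hand side lies in $W^{m-2,p}(B)$: each term $\nabla\cdot(v_i\nabla c_{m-i})$ expands into $\nabla v_i\cdot\nabla c_{m-i}+v_i\Delta c_{m-i}$, and $\Delta(c_{m-i}v_i)=c_{m-i}\Delta v_i+2\nabla c_{m-i}\cdot\nabla v_i+v_i\Delta c_{m-i}$; in every summand the homogeneous polynomial factor (namely $\nabla c_{m-i}$ of degree $m-i-1$, $\Delta c_{m-i}$ of degree $m-i-2$, or $c_{m-i}$ of degree $m-i$) has degree chosen exactly so that, after applying the inductive hypotheses (2) and (3) with the appropriate $d_j$, the product sits in $W^{m-2,p}(B)$. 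Since $c_0$ is a nonzero constant, elliptic regularity for the Dirichlet problem for $\Delta$ in the ball $B$ (\cite[Thm 9.15]{gt1}, and its $C^\infty$ interior regularity away from the source) upgrades this to $v_m\in W^{m,p}(B)\cap C^\infty(B\setminus\{\hat x\})$, giving (1). Then (3) for index $m$ follows: $d_j\Delta v_m$ equals $\frac1{c_0}d_j\cdot(\text{RHS})$, and multiplying the RHS (shown to be in $W^{m-2,p}$, built from terms of the form $d'\,v_i$, $d'\,\nabla v_i$, $d'\,\Delta v_i$ with $d'$ homogeneous) by a further homogeneous polynomial $d_j$ of degree $j\ge 1$ gains $j$ derivatives by the inductive estimates, landing in $W^{m+j-2,p}(B)$. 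Finally (2) for index $m$: write $\Delta(d_j v_m)=d_j\Delta v_m+2\nabla d_j\cdot\nabla v_m+v_m\Delta d_j$; the first term is in $W^{m+j-2,p}\subset W^{(j+m)-2,p}$ by the (3) just proved, and for the other two I would use that $v_m$ vanishes on $\partial B$ so $d_j v_m$ does too, then bootstrap through the Dirichlet problem $\Delta(d_j v_m)=(\text{above})$ — but more cleanly one argues directly that multiplying $v_m$ and its first derivatives by homogeneous polynomials of degree $\ge j$ (respectively $\ge j-1$ on the gradient, compensated by the extra derivative) preserves membership in the right Sobolev class, exactly as in the $m=1$ case, so $d_j v_m\in W^{j+m,p}(B)$.

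The bookkeeping engine underlying all of this is a single elementary fact that I would isolate as a sub-claim and prove once: if $f\in W^{a,p}(B)$ has a singularity only at $\hat x$ and is $C^\infty$ on $B\setminus\{\hat x\}$, and $q$ is a homogeneous polynomial of degree $b$ centered at $\hat x$, then $qf\in W^{a+b,p}(B)$ — because the derivatives of $qf$ of order $a+b$ are sums of $(\partial^\alpha q)(\partial^\beta f)$ with $|\alpha|+|\beta|=a+b$; when $|\beta|\le a$ the polynomial factor $\partial^\alpha q$ has degree $b-(a+b-|\beta|)=|\beta|-a\ge 0$ wait — one must instead track that $\partial^\alpha q$ vanishes to order $b-|\alpha|$ at $\hat x$ and that $\partial^\beta f$ blows up slower than $|x-\hat x|^{-(n/p' )-(|\beta|-a)}$ type bounds inherited from $v_0$, so the product is $L^p$; I would make this precise using the explicit homogeneity of $v_0=\frac1{c_0}\partial_y g$ (a $(-(n-1))$-homogeneous kernel, whence $d_j v_0\in W^{j,p}$ for all $j$ as already recorded) and propagate it through the definitions. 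The main obstacle I anticipate is precisely this: carefully matching degrees of vanishing of the polynomial coefficients against the orders of the (distributional) singularities of the $v_i$ so that each term in the recursion genuinely lands in $W^{m-2,p}$ and not merely in some larger space; the $C^\infty$-away-from-$\hat x$ part of each claim is routine interior elliptic regularity and is not the difficulty.
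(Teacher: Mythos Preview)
Your overall inductive plan on $m$ is right, and your route to (1) and (3) via the recursion plus elliptic regularity is exactly the paper's. The gap is in (2).

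The ``bookkeeping engine'' sub-claim you isolate --- that $f\in W^{a,p}(B)\cap C^\infty(B\setminus\{\hat x\})$ and $q$ homogeneous of degree $b$ imply $qf\in W^{a+b,p}(B)$ --- is \emph{false}. Take $n=3$, $\hat x=0$, $p\in(1,3/2)$, and $f(x)=|x|^{-2}\sin(|x|^{-1})$ near $0$ (cut off smoothly). Then $f\in L^p(B)\cap C^\infty(B\setminus\{0\})$, but $x_1 f\notin W^{1,p}(B)$: differentiating produces a term of size $\sim |x|^{-3}$, not in $L^p$. Smoothness away from $\hat x$ places no constraint on oscillation of the singularity, so vanishing of the multiplier cannot by itself buy derivatives. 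You half-sense this (the ``wait'') and suggest tracking explicit homogeneity inherited from $v_0$, but that would require establishing a polyhomogeneous expansion for every $v_m$ --- a separate and substantial project.

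The fix is precisely the approach you mention and then set aside: prove (2) at each level $m$ by an \emph{inner induction on $j$}, using elliptic regularity for the Dirichlet problem. Write
\[
\Delta(d_j v_m) \;=\; d_j\Delta v_m \;+\; 2\nabla\cdot(v_m\nabla d_j) \;-\; v_m\Delta d_j.
\]
The first term lies in $W^{m+j-2,p}$ by (3); the components of $v_m\nabla d_j$ are (degree $j{-}1$)$\times v_m$, hence in $W^{m+j-1,p}$ by (2) for $j{-}1$ at the \emph{same} $m$ (or by (1) when $j=1$), so the divergence is in $W^{m+j-2,p}$; and $v_m\Delta d_j$ is handled by (2) for $j{-}2$. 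Elliptic regularity then gives $d_j v_m\in W^{m+j,p}$. This is the paper's argument. It also dispenses with your ad hoc treatment of the $m=1$ base case: (2) for $m=1$ is not proved directly from the formula for $v_1$ (which, as you note, would need an analogous multiplication property for $w$), but is obtained from (1) and (3) for $m=1$ via this same inner-$j$ bootstrap --- so (3) for $m=1$ is neither vacuous nor immediate, it is the first genuine instance of the inductive step.
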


\begin{proof} The proof is by induction. As the base case, from the previous Lemma we already have (1) for $m=0$ and (2) for
$m=0,\forall j\geq1$. We also have (1) for $m=1$. The following steps complete the induction argument.\\

{$\Big[(1) \forall 0\leq m< M \textnormal{ and } (2) \forall j\geq1, \forall 0\leq m< M\Big]\Rightarrow
\Big[(3) \textnormal{ for }m=M,\forall j\geq1\Big].$}
Using the  definition of $v_{M}$, $M\geq1$
\begin{align*}
d_j\Delta v_{M}=&\frac{d_j}{c_0}\sum_{i=0}^{M-1}[\nabla\cdot(v_i\nabla c_{M-i})-\Delta(c_{M-i}v_i)]\\
=&\frac{1}{c_0}\sum_{i=0}^{M-1}[\nabla\cdot(d_jv_i\nabla c_{M-i})-v_i\nabla\cdot(d_j\nabla c_{M-i})-d_j
\Delta(c_{M-i}v_i)]\\
=&\frac{1}{c_0}\sum_{i=0}^{M-1}[\nabla\cdot(d_jv_i\nabla c_{M-i})-v_i\nabla\cdot(d_j\nabla c_{M-i})-
\Delta(d_jc_{M-i}v_i)\\
&-c_{M-i}v_i\Delta d_j +2\nabla\cdot(c_{M-i} v_i \nabla d_j)]
\end{align*}
and by the inductions hypotheses each summand in the right hand side is in $W^{j+M-2,p}(B)$.\\

{For $M\geq1$,  $\Big[(1) \textnormal{ and } (3) \textnormal{ for }  j=1\Big]\Rightarrow
\Big[(2) \textnormal{ for } j=1\Big].$} We have
\begin{align*}
\Delta d_1v_M&=2\nabla\cdot(v_M\nabla d_1)+d_1\Delta v_M
\end{align*}
and by induction hypotheses the right hand side is in $W^{M-1,p}(B)$, hence (Chp. 9, \cite{Krylov08})
$d_1v_M\in W^{M+1,p}(B)$.\\

{For $M\geq1$,  $\Big[(3) \textnormal{ for } j=J \textnormal{ and } (2) \textnormal{ for } 1\leq j<J \Big]\Rightarrow
\Big[(2) \textnormal{ for } J\Big].$} We have
\begin{align*}
\Delta d_Jv_M&=2\nabla\cdot(v_M\nabla d_J)+d_J\Delta v_M-v_M\Delta d_J
\end{align*}
and by induction hypotheses the right hand side is in $W^{M+J-2,p}(B)$, hence (Chp. 9, \cite{Krylov08})
$d_Jv_M\in W^{M+J,p}(B)$.\\

{$\Big[(1) \forall 1\leq m< M \textnormal{ and } (2) \forall j\geq1, \forall 1\leq m< M\Big]\Rightarrow
\Big[(1) \textnormal{ for }m=M\Big].$}
By definition 
\begin{align*}
\Delta v_{M}&=\frac{1}{c_0}\sum_{i=0}^{M-1}[\nabla\cdot(v_i\nabla c_{M-i})-\Delta(c_{M-i}v_i)].
\end{align*}
By induction hypotheses, for$ M\geq 2$ the right hand side is in $W^{M-2,p}(B)$, hence
$v_M\in W^{M,p}(B)$. Elliptic regularity and the induction hypotheses also imply
$v_M\in C^\infty(B\setminus\{\hat{x}\})$.
\end{proof}

In Definition \ref{familyvm} we have an explicit construction of each $v_m$
in terms of the polynomials $c_j$. This provides an explicit dependence of
each $v_m$ in terms of the coefficients of the $c_j$'s.

\begin{lemma}\label{basis} Let $\{v_m\}_{m\in E}$ be the family associated to $\{c_j\}_{j\in E}$.
We can write each $v_m$ as
\begin{align*}
v_m=\sum_{l\in I_m} p_{l,m} e_{l,m}
\end{align*}
where $I_m$ is a finite index set, $\{p_{l,m}\}_{l\in I_m}$ is a family of real valued polynomials evaluated
in $\{1/c_0\}\cup\{c_{\beta,j}\}_{|\beta|=j,1\leq j\leq m}$, but otherwise independent of $\{c_j\}$.
And where $\{e_{l,m}\}_{l\in I_m}$ is a family of functions in $X$ independent
of $\{c_j\}$, each $e_{l,m}$ satisfying (1),(2),(3) for $m$ of Lemma \ref{singseries}.
\end{lemma}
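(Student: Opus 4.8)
The plan is to argue by induction on $m$, following the recursive structure of Definition \ref{familyvm} and using the linearity and unique solvability of the Dirichlet problem for $\Delta$ in $B$. The conceptual content is that the recursion for $v_m$ is \emph{linear} in the lower-order data, while the only $\{c_j\}$-dependence enters through the constants $1/c_0$ and the coefficients $c_{\beta,j}$; peeling these constants out of each linear operation produces the claimed family of fixed functions $e_{l,m}$.

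\emph{Base cases $m=0,1$.} Here the decomposition is read directly off the defining formulas. Set $\tilde g := \partial_y g$ and let $\tilde w$ be the solution of $\Delta \tilde w = \tilde g$ in $B$ with $\tilde w = 0$ on $\partial B$; both are fixed functions, independent of $\{c_j\}$. Since $c_0$ is a constant, $v_0 = \frac{1}{c_0}\tilde g$ and $w = \frac{1}{c_0}\tilde w$, so that
\begin{align*}
v_1 = \frac{1}{c_0^2}\sum_{|\beta|=1} c_{\beta,1}\Big( \nabla (x-\hat{x})^\beta \cdot \nabla \tilde w - (x-\hat{x})^\beta\, \tilde g\Big).
\end{align*}
Thus $I_0, I_1$ are finite, $p_{\cdot,0} = 1/c_0$ and $p_{\beta,1} = c_{\beta,1}/c_0^2$ are polynomials in $1/c_0$ and the $c_{\beta,j}$, the accompanying functions are independent of $\{c_j\}$, and that they satisfy (1)--(3) of Lemma \ref{singseries} for $m=0,1$ is checked by exactly the elliptic-regularity arguments already carried out there for $v_0$ and $v_1$.

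\emph{Inductive step.} Assume the decomposition, with all stated properties, holds for every $v_i$ with $0 \leq i \leq m-1$, where $m \geq 2$. Substitute $v_i = \sum_{l \in I_i} p_{l,i} e_{l,i}$ into the recursion $c_0 \Delta v_m = \sum_{i=0}^{m-1}\big[\nabla\cdot(v_i\nabla c_{m-i}) - \Delta(c_{m-i}v_i)\big]$ and expand $c_{m-i} = \sum_{|\beta|=m-i} c_{\beta,m-i}\, q_\beta$ with $q_\beta(x):=(x-\hat{x})^\beta$ the fixed monomials. Because $\nabla\cdot$ and $\Delta$ are linear and the $p_{l,i}$, $c_{\beta,m-i}$ are scalars, the right-hand side becomes a finite sum $\sum_k P_k\, h_k$ in which each $P_k = p_{l,i}\, c_{\beta,m-i}$ is — by the inductive hypothesis, since $i \leq m-1$ and $1 \leq m-i \leq m$ — a polynomial in $1/c_0$ and $\{c_{\beta,j}: 1\leq j\leq m\}$, and each $h_k$ equals $\nabla\cdot(e_{l,i}\nabla q_\beta) - \Delta(q_\beta e_{l,i})$ for some $i,l,\beta$, hence is a fixed function independent of $\{c_j\}$. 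Letting $e_{k,m}$ solve $\Delta e_{k,m} = h_k$ in $B$ with $e_{k,m}=0$ on $\partial B$, the linearity and uniqueness of the Dirichlet problem give $v_m = \sum_k \frac{P_k}{c_0}\, e_{k,m}$, which is the asserted decomposition with $I_m$ the index set of the $k$'s and $p_{k,m} = P_k/c_0$ (again a polynomial in $1/c_0$ and the $c_{\beta,j}$, $1\leq j\leq m$).

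\emph{Regularity of the $e_{k,m}$ and the main obstacle.} It remains to verify that each $e_{k,m}$ satisfies (1), (2), (3) for $m$ of Lemma \ref{singseries}, and this is where essentially all of the (routine) work sits. The point is that $h_k$ has precisely the form of one summand of the right-hand side of the recursion defining $v_m$, but with $v_i$ replaced by $e_{l,i}$ — which by hypothesis enjoys the same regularity as $v_i$ — and with $c_{m-i}$ replaced by a single homogeneous polynomial $q_\beta$ of degree $m-i$. Consequently the four implications in the proof of Lemma \ref{singseries} (the estimate for $d_j\Delta v_m$, then $d_1 v_m$, then $d_J v_m$, then $v_m$ itself) apply verbatim to $e_{k,m}$ with $M=m$. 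I expect no genuine difficulty here beyond careful bookkeeping of polynomial degrees and Sobolev indices; the only subtlety worth flagging is that the generic recursion for $m\geq 2$ differs in form from the special definitions of $v_0$ and $v_1$, which is exactly why those two cases are handled separately as the base of the induction.
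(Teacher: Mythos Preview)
Your proof is correct and follows essentially the same inductive scheme as the paper: peel off the scalar coefficients $c_{\beta,m-i}$ and $1/c_0$ from the recursion, define the $e_{l,m}$ as the Dirichlet solutions with the remaining fixed source terms, and note that the regularity proof of Lemma~\ref{singseries} applies termwise. The only cosmetic differences are that the paper takes just $m=0$ as the base case and rewrites the source via the identity $\nabla\cdot(v_i\nabla c_{m-i})-\Delta(c_{m-i}v_i)=-\nabla\cdot(c_{m-i}\nabla v_i)$ before expanding, whereas you treat $m=1$ separately and keep the source in its original two-term form; both lead to the same conclusion.
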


\begin{proof} By induction. True for $v_0$ from its definition with $p_{1,0}(1/c_0)=1/c_0$ and $e_{1,0}=\partial_y g$. For $m\geq1$,
the linear system defining $v_m$ can be written as
\begin{align*}
\Delta v_m &= - \frac{1}{c_0}\sum_{i=0}^{m-1} \nabla(  c_{m-i}\nabla v_i)\\
\Delta v_m &= \frac{1}{c_0}\sum_{i=0}^{m-1} \sum_{|\beta|={m-i}} c_{\beta,m-i} \nabla((x-\hat{x})^\beta \nabla v_i)\\
&= \sum_{i=0}^{m-1} \sum_{|\beta|={m-i}} \sum_{l\in I_i} \frac{c_{\beta,m-i}}{c_0}p_{l,i}   \nabla((x-\hat{x})^\beta \nabla e_{l,i}).
\end{align*}
Defining $\{e_{l,m}\}_{l\in I_m}$ as the solutions $e$ of the family of equations  
\begin{align*}
\Delta e &= \nabla((x-\hat{x})^\beta \nabla e_{l,i}) \hspace{.5cm} \textnormal{ for } 0\leq i\leq m-1,  |\beta|=i,  l\in I_i
\end{align*}
the result follows.
\end{proof}

We can now explicitly describe the singular part of the solution $\lambda$ of $(A_s)$.

\begin{theorem}\label{singpart} Let $\gamma\in C^K(\overline{B})$. Let $\{c_j\}_{j=0}^K$ form  the partial
Taylor sum of $\gamma$ about $\hat{x}$, namely
\begin{align*}
\gamma(x)=\sum_{j=0}^K c_j(x)+\gamma_K(x)
\end{align*}
with $\gamma_K(x)=o(|x-\hat{x}|^K), \gamma_K\in C^K(\overline{B})$. Assume $c_0\neq 0$ and let $\{v_m\}_{m=1}^K$ 
be the family constructed in Definition \ref{familyvm}, corresponding to the $\{c_j\}_{j=0}^K$.
Define $w_K\in L^p(B)\cap C^{\infty}(B\setminus\{\hat{x}\})$ as
\begin{align*}
w_K=\sum_{m=0}^K v_m.
\end{align*}
Then there exists $h_K\in W^{K-2,p}(B)\cap C^{\infty}(B\setminus\{\hat{x}\})$ such that
\begin{align*}
\nabla\cdot(\gamma\nabla w_K)=\partial_y \delta_{\hat{x}}+h_K&\textnormal{ in } B.
\end{align*}
In addition, if $U$ is a compact subset of $ B\setminus\{\hat{x}\}$, then $w_K\in L^p(B)\cap C^\infty(U)$
depends continuously in the coefficients of  $\{c_j\}_{j=0}^K$.
Also, $h_K\in W^{K-2,p}(B)\cap C^\infty(U)$ depends continuously in $\gamma$ under $C^K(\overline{B})$
perturbations.
\end{theorem}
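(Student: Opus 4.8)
The plan is to establish the identity by a direct computation, organizing the terms by their total degree, and then to read off $h_K$ as the ``overflow'' of that organization together with the Taylor--remainder contribution. Writing $\gamma=\sum_{j=0}^{K}c_j+\gamma_K$ and $w_K=\sum_{m=0}^{K}v_m$ and expanding by linearity,
\begin{align*}
\nabla\cdot(\gamma\nabla w_K)=\sum_{j=0}^{K}\sum_{m=0}^{K}\nabla\cdot(c_j\nabla v_m)+\sum_{m=0}^{K}\nabla\cdot(\gamma_K\nabla v_m).
\end{align*}
In the double sum I collect the terms with $l:=j+m$ fixed. For $l=0$ the only contribution is $\nabla\cdot(c_0\nabla v_0)=\partial_y\delta_{\hat x}$ by \eqref{eqv0}; for $1\le l\le K$ the restrictions $j,m\le K$ are vacuous, so the inner sum equals $\sum_{i=0}^{l}\nabla\cdot(c_{l-i}\nabla v_i)=0$ by \eqref{cancelvm}; for $K+1\le l\le 2K$ the cancellation fails. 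Hence the asserted identity holds on $B$ with
\begin{align*}
h_K:=\sum_{l=K+1}^{2K}\ \sum_{\substack{j+m=l\\ 0\le j,m\le K}}\nabla\cdot(c_j\nabla v_m)\;+\;\sum_{m=0}^{K}\nabla\cdot(\gamma_K\nabla v_m),
\end{align*}
so it remains to prove the regularity of $w_K$ and $h_K$ and their continuous dependence on the data.

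That $w_K\in L^p(B)\cap C^\infty(B\setminus\{\hat x\})$ is immediate from Lemma \ref{singseries}(1). For the polynomial overflow terms note that $j+m\ge K+1$ forces $j\ge1$, so $c_j$ has positive degree; rewriting $\nabla\cdot(c_j\nabla v_m)=\Delta(c_jv_m)-\nabla\cdot(v_m\nabla c_j)$ and applying Lemma \ref{singseries}(1)--(2) to $c_j$ and to the components of $\nabla c_j$ (homogeneous of degree $j-1$) gives $c_jv_m\in W^{j+m,p}(B)$ and $v_m\nabla c_j\in\bigl(W^{j+m-1,p}(B)\bigr)^n$, hence $\nabla\cdot(c_j\nabla v_m)\in W^{j+m-2,p}(B)\subset W^{K-2,p}(B)$, smooth off $\hat x$. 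The step I expect to be the main obstacle is the remainder term $\sum_m\nabla\cdot(\gamma_K\nabla v_m)$, which away from $\hat x$ is as smooth as $\gamma$ allows, so only the behaviour at $\hat x$ is at issue. Here I would combine two ingredients. First, since $\gamma_K\in C^K(\overline B)$ has vanishing Taylor polynomial at $\hat x$ up to degree $K$, Taylor's theorem gives the local bounds $|\partial^\beta\gamma_K(x)|\le C\,|\gamma_K|_{C^K(\overline B)}\,|x-\hat x|^{K-|\beta|}$ for every $|\beta|\le K$. Second, starting from the explicit $v_0=\tfrac1{c_0}\partial_y g$ and inducting on $m$ through the recursion $c_0\Delta v_m=\sum_{i<m}[\nabla\cdot(v_i\nabla c_{m-i})-\Delta(c_{m-i}v_i)]$ with interior (Newtonian--potential) estimates, one obtains the pointwise singularity bounds $|\partial^\alpha v_m(x)|\le C_{m,\alpha}\,|x-\hat x|^{m+1-n-|\alpha|}$ near $\hat x$. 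Expanding $\partial^\alpha\bigl[\nabla\cdot(\gamma_K\nabla v_m)\bigr]$ for $|\alpha|\le K-2$ by Leibniz, every term is a product $(\partial^{\beta_1}\gamma_K)(\partial^{\beta_2}v_m)$ with $|\beta_1|+|\beta_2|\le K$ and $|\beta_1|\le K-1$, hence is dominated near $\hat x$ by $C\,|x-\hat x|^{K-|\beta_1|}|x-\hat x|^{m+1-n-|\beta_2|}=C\,|x-\hat x|^{\,m+K-|\alpha|-1-n}$, whose worst case ($m=0$, $|\alpha|=K-2$) is $C\,|x-\hat x|^{1-n}$. Since $p<\tfrac n{n-1}$ we have $|x-\hat x|^{1-n}\in L^p(B)$ — this is exactly where the hypothesis on $p$ enters — and (after the routine check that distributional and classical derivatives coincide, the relevant functions lying in $W^{1,1}_{\mathrm{loc}}(B)$) one concludes $h_K\in W^{K-2,p}(B)$, smooth off $\hat x$.

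For the continuity assertions I would use Lemma \ref{basis}: $v_m=\sum_{l\in I_m}p_{l,m}e_{l,m}$ with the $e_{l,m}$ fixed functions (independent of $\{c_j\}$) satisfying the regularity of Lemma \ref{singseries}, and the $p_{l,m}$ polynomials in $1/c_0$ and the coefficients $\{c_{\beta,j}\}$. Thus $w_K$ and each polynomial overflow term $\nabla\cdot(c_j\nabla v_m)=\sum_l p_{l,m}\,\nabla\cdot(c_j\nabla e_{l,m})$ are fixed finite linear combinations of fixed elements of $L^p(B)\cap C^\infty(U)$, respectively $W^{K-2,p}(B)\cap C^\infty(U)$, whose coefficients depend polynomially — hence continuously, since $c_0=\gamma(\hat x)\ge c>0$ — on the data; this yields the continuous dependence of $w_K$ on the coefficients of $\{c_j\}$. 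For $h_K$ one adds that $\gamma\mapsto(c_j)_j$ and $\gamma\mapsto\gamma_K$ are bounded linear on $C^K(\overline B)$, and that, since the estimates above are linear in $|\gamma_K|_{C^K(\overline B)}$, each map $\gamma_K\mapsto\nabla\cdot(\gamma_K\nabla e_{l,m})$ is bounded from $\{f\in C^K(\overline B):\partial^\beta f(\hat x)=0\ \forall\,|\beta|\le K\}$ into $W^{K-2,p}(B)\cap C^\infty(U)$; composing these maps gives the continuous dependence of $h_K$ on $\gamma$ under $C^K(\overline B)$ perturbations.
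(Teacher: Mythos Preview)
Your decomposition is algebraically equivalent to the paper's: the paper splits $\gamma=\sum_{j=0}^{K-m}c_j+\gamma_{K-m}$ separately for each $v_m$, which makes the double sum triangular from the outset and gives $h_K=\sum_{m=0}^K\nabla\cdot(\gamma_{K-m}\nabla v_m)$; since $\gamma_{K-m}=\sum_{j=K-m+1}^K c_j+\gamma_K$, this is exactly your overflow-plus-remainder expression. The substantive difference is in the regularity step. The paper simply cites Lemma~\ref{singseries} for $h_K\in W^{K-2,p}$, a lemma literally stated only for homogeneous polynomial factors $d_j$; you instead isolate the non-polynomial piece $\sum_m\nabla\cdot(\gamma_K\nabla v_m)$ and treat it by combining the Taylor-remainder bounds $|\partial^\beta\gamma_K|\lesssim|x-\hat x|^{K-|\beta|}$ with pointwise singularity bounds $|\partial^\alpha v_m|\lesssim|x-\hat x|^{m+1-n-|\alpha|}$ obtained by induction from the explicit $v_0$ and Newtonian-potential estimates. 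Your route is more explicit about why the non-polynomial remainder still lands in $W^{K-2,p}$ (and about where the hypothesis $p<n/(n-1)$ is actually used), at the price of having to justify the pointwise bounds on $v_m$ --- a standard but separate induction parallel to that of Lemma~\ref{singseries}. The paper's route stays entirely in the Sobolev framework, with the tacit understanding that a $C^K$ function vanishing to order $K-m$ can play the role of a polynomial of degree $\ge K-m+1$ in the inductive scheme of Lemma~\ref{singseries}. Either way the continuity assertions come down to Lemma~\ref{basis}, which you invoke just as the paper does.
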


\begin{proof} Let
\begin{align*}
\gamma_{K-i}=\gamma(x)-\sum_{j=0}^{K-i} c_j(x).
\end{align*}
Then $\gamma_{K-i}(x)=o(|x-\hat{x}|^{K-i}), \gamma_{K-i}\in C^K(\overline{B})$. We have
\begin{align*}
\nabla\cdot(\gamma\nabla w_K)&=\sum_{m=0}^K\nabla\cdot(\gamma \nabla v_m)\\
&=\sum_{m=0}^K\nabla\cdot\Big(\Big[\sum_{j=0}^{K-m} c_j+\gamma_{K-m}\Big] \nabla v_m\Big)\\
&=\sum_{m=0}^K\sum_{j=0}^{K-m}\nabla\cdot( c_j \nabla v_m)
+\sum_{m=0}^K\nabla\cdot(\gamma_{K-m}\nabla v_m)
\end{align*}
\begin{align*}
\nabla\cdot(\gamma\nabla w_K)&=\sum_{i=0}^K\sum_{j=0}^{i}\nabla\cdot( c_{i-j} \nabla v_j)
+\sum_{m=0}^K\nabla\cdot(\gamma_{K-m}\nabla v_m)\\
&=\partial_y\delta_{\hat{x}}+h_K
\end{align*}
where the first term is simplified using equations (\ref{eqv0}), (\ref{cancelvm}) and where $h_K$ is defined as
\begin{align*}
h_K:= \sum_{m=0}^K\nabla\cdot(\gamma_{K-m}\nabla v_m).
\end{align*}
Then  $h_K  \in W^{K-2,p}(B)\cap C^{\infty}(B\setminus\{\hat{x}\})$ by Lemma \ref{singseries}. The continuous
 dependencies of $w_K$ and $h_K$ are a consequence of
Lemma \ref{basis} and the definitions of $w_K,h_K$.
\end{proof}

\begin{theorem}\label{lambda}
Let $X$ be a $C^{k+2,\alpha}$ bounded domain in $\R^n$. Let $\gamma\in C^{k+n+2}(\overline{X})$
be such that $\exists c,C$ constants for which
\begin{align*}
0<c\leq\gamma(x)\leq C<\infty \hspace{1cm} \forall x\in X .
\end{align*}
Then there is a solution $\lambda\in L^p(X)\cap C^{k+2,\alpha}(\overline{X}\setminus\{\hat{x}\})$ of
\begin{align*}
(A)\begin{cases}\nabla\cdot(\gamma\nabla \lambda)=\partial_y\delta_{\hat{x}}&\textnormal{ in } X\\
\lambda=0&\textnormal{ in } \partial X.
\end{cases}
\end{align*}
Also, for any compact
set $U\subset (\overline{X}\setminus\{\hat{x}\})$, we have that $\lambda|_U\in L^p(X)\cap C^{k+2,\alpha}(U)$ depends
continuously in $\gamma$ under $C^{k+n+2}(\overline{X})$ perturbations.
\end{theorem}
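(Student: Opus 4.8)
The plan is to combine the explicit local construction of the singular part from Theorem \ref{singpart} with standard interior and global elliptic regularity for the smooth remainder. First I would pick a ball $B$ centered at $\hat{x}$ containing $\overline{X}$ and, writing $K = k+n+2$, apply Theorem \ref{singpart} to obtain $w_K \in L^p(B) \cap C^\infty(B\setminus\{\hat{x}\})$ and $h_K \in W^{K-2,p}(B) \cap C^\infty(B\setminus\{\hat{x}\})$ with $\nabla\cdot(\gamma\nabla w_K) = \partial_y \delta_{\hat{x}} + h_K$ in $B$, hence in particular in $X$. I would then define the correction $r$ as the solution of the \emph{regular} boundary value problem
\begin{align*}
\begin{cases}\nabla\cdot(\gamma\nabla r) = -h_K & \textnormal{ in } X\\ r = -w_K & \textnormal{ in } \partial X,\end{cases}
\end{align*}
noting that the boundary data $-w_K|_{\partial X}$ is $C^\infty$ (since $\partial X$ is a compact subset of $B\setminus\{\hat{x}\}$) and the right-hand side $-h_K$ lies in $W^{K-2,p}(X)$. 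Then $\lambda := w_K + r$ solves $(A)$ in $X$ with the zero boundary condition, and $\lambda \in L^p(X)$ since both $w_K$ and $r$ are.

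The regularity claim $\lambda \in C^{k+2,\alpha}(\overline{X}\setminus\{\hat{x}\})$ is obtained as follows. For $w_K$ this is already part of Theorem \ref{singpart} (it is $C^\infty$ away from $\hat{x}$). For $r$, one needs $r \in C^{k+2,\alpha}(\overline{X})$: the boundary data is smooth, and the source $-h_K$ is in $W^{K-2,p}(X)$. I would invoke Sobolev embedding: with $K-2 = k+n$ and $p \in (1, \tfrac{n}{n-1})$, Morrey's inequality gives $W^{k+n,p}(X) \hookrightarrow C^{k,\alpha}(\overline{X})$ for the chosen $\alpha = n - \tfrac{n}{p}$ (this is exactly why $\alpha$ was defined this way and why $K$ carries the extra $+n$). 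Thus $h_K \in C^{k,\alpha}(\overline{X})$, and Theorem \ref{mainu} (with $\gamma \in C^{k+1,\alpha}(\overline{X})$, which follows from $\gamma\in C^{k+n+2}(\overline X)$ since $n\geq 1$) yields $r \in C^{k+2,\alpha}(\overline{X})$, hence $\lambda = w_K+r \in C^{k+2,\alpha}(\overline{X}\setminus\{\hat{x}\})$.

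For the continuous dependence on $\gamma$, fix a compact $U \subset \overline{X}\setminus\{\hat{x}\}$. Under $C^{K}(\overline{X})$ perturbations of $\gamma$: the Taylor coefficients $\{c_j\}_{j=0}^K$ depend continuously on $\gamma$, so by the last assertion of Theorem \ref{singpart}, $w_K$ depends continuously on $\gamma$ in $L^p(B)\cap C^\infty(U')$ for any compact $U'$ containing $U$ and $\partial X$ but avoiding $\hat{x}$, and $h_K$ depends continuously on $\gamma$ in $W^{K-2,p}(B)\cap C^\infty(U')$, hence in $C^{k,\alpha}(\overline{X})$ by the embedding above. The problem defining $r$ has coefficient $\gamma$, boundary data $-w_K|_{\partial X}$, and source $-h_K$; all three depend continuously on $\gamma$ in the relevant norms, and $r$ depends continuously and even smoothly on these data by the estimate in Theorem \ref{mainu} (the $\kappa$ there is uniform since $c,C$ may be taken uniform over a small $C^K$-neighborhood of $\gamma$). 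Therefore $\lambda|_U = w_K|_U + r|_U$ depends continuously on $\gamma$ in $L^p(X)\cap C^{k+2,\alpha}(U)$.

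The main obstacle is bookkeeping the loss of derivatives: one must verify that the number of derivatives built into $\gamma \in C^{k+n+2}$ is exactly enough to push $h_K$ through $W^{K-2,p} = W^{k+n,p}$, then through Morrey into $C^{k,\alpha}$, and finally through the elliptic gain of two derivatives in Theorem \ref{mainu} to land in $C^{k+2,\alpha}$. The choice $\alpha = n - n/p$ and the restriction $p < n/(n-1)$ are precisely calibrated for the Sobolev embedding $W^{k+n,p}\hookrightarrow C^{k,\alpha}$ to be sharp, so no slack is available and the constants must be tracked carefully; everything else is an assembly of results already in hand.
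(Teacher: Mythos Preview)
Your argument is correct and close in spirit to the paper's, but the decomposition is organized differently. The paper first solves $\nabla\cdot(\gamma\nabla v)=-h_K$ on the large ball $B$ in the $W^{K,p}$ framework, obtains $v\in W^{K,p}(B)\hookrightarrow C^{k+2,\alpha}(\overline B)$ via the Sobolev embedding, and then solves a \emph{homogeneous} problem $\nabla\cdot(\gamma\nabla w)=0$ on $X$ with boundary data $-(w_K+v)|_{\partial X}$ using Theorem~\ref{mainu}; finally $\lambda=w_K+v+w$. You instead embed $h_K\in W^{k+n,p}\hookrightarrow C^{k,\alpha}(\overline X)$ first and then solve a single inhomogeneous problem on $X$ directly with Theorem~\ref{mainu}. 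Both routes use the same Sobolev--Morrey embedding (with the calibrated $\alpha=n-n/p$), just at different stages; your one-step version is a bit leaner, while the paper's version keeps the $L^p$-theory and the Schauder step cleanly separated, which makes the continuous dependence on $\gamma$ slightly easier to read off piece by piece.

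Two small points to tidy up. First, Theorem~\ref{singpart} requires $\gamma\in C^K(\overline B)$, so you should say explicitly that you extend $\gamma$ from $\overline X$ to $\overline B$ in $C^{k+n+2}$ before invoking it (the paper does this). Second, in your continuous-dependence paragraph, the constant $\kappa$ in Theorem~\ref{mainu} in fact depends on the H\"older norm of the coefficient, not only on the ellipticity bounds $c,C$; what you need (and what holds) is that $\kappa$ is locally uniform over a $C^{k+1,\alpha}$-bounded set of coefficients, which is automatic on a small $C^{k+n+2}$-neighborhood of $\gamma$. With these cosmetic fixes your proof goes through.
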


\begin{proof} Let $B$ be a ball centered in $\hat{x}$ and large enough to contain $\overline{X}$.
Extend $\gamma$ as $C^{k+n+2}(\overline{B})$ and let $K=k+n+2$. Let $\{c_j\}_{j=0}^K$ form the partial
Taylor series of $\gamma$ about $\hat{x}$ and let $w_K, h_K$ be as in Theorem \ref{singpart}.
Since $h_K\in W^{K-2,p}(B)$ then (Chp. 9, \cite{Krylov08}) there exists a unique
$v\in W^{K,p}(B)$ solution of 
\begin{align*}
\begin{cases}
\nabla\cdot(\gamma\nabla v)=-h_K&\textnormal{ in } B\\
v(x)=0&\textnormal{ in } \partial B
\end{cases}
\end{align*}
which depends continuously on $h_K$. By Sobolev embedding, $v\in C^{k+2,\alpha}(B)$
(recall that $\alpha=n-n/p$)
and it depends continuously on $h_K$, hence it depends continuously on $\gamma$ under
$C^{k+n+2}(\overline{B})$ perturbations.

Additionally, since $[w_K+v]_{\partial X}\in C^{k+2,\alpha}(\partial X)$, Theorem \ref{mainu}
implies that there is a unique $w\in C^{k+2,\alpha}(X)$ solution of
\begin{align*}
\begin{cases}
\nabla\cdot(\gamma\nabla w)=0&\textnormal{ in } X\\
w(x)=-w_K(x)-v(x)&\textnormal{ in } \partial X
\end{cases}
\end{align*}
which depends continuously on $[w_K+v]_{\partial X}\in C^{k+2,\alpha}(\partial X)$,
hence it depends continuously in $\gamma$ under $C^{k+n+2}(B)$ perturbations.

Finally, $\lambda=(w_K|_{\overline{X}}+v|_{\overline{X}}+w)$ is a solution of $(A)$
with the desired properties.
\end{proof}

\begin{proof}[{\bf Proof of Theorem \ref{existslambda}}] Theorem \ref{existslambda} is Theorem \ref{lambda} for $k+1$ instead of $k$.
\end{proof}

\subsection{Proof of Theorem \ref{firstpropF}}   We use the same notation as Definition
\ref{definitionF} and Theorem \ref{firstpropF}.
\begin{proof}
We separate in two cases.
\begin{enumerate}
\item[{\bf Case 1}.] If $\nabla u(\hat{x})=0$ then immediately $\nabla u(\hat{x})\cdot\nabla v(\hat{x})=0$.\\
\item[{\bf Case 2}.] The equations for $u,v$ and $\lambda$, plus integration by parts, give
\begin{align*}
\nabla u(\hat{x})\cdot\nabla v(\hat{x})=\int_X\lambda \nabla\cdot((\gamma-\gamma_0)\nabla u)
-\mu\int_{\partial X}\gamma_s\frac{\partial \lambda}{\partial \nu}g
\end{align*}
with $g=F(f,s)$. Recall the definition of $\mu$,
\begin{align*}
\mu= \frac{\int_X\lambda\nabla \cdot((\gamma-\gamma_0)\nabla u)}{\qquad \Big|\gamma_s\frac{\partial\lambda}{\partial\nu}\Big|^2_{L^2(\partial X)}}.
\end{align*}
\subitem {\bf Case 2.1}. If $\mu\geq0$ then $g\equiv 0$ and $\int_X\lambda \nabla\cdot((\gamma-\gamma_0)\nabla u)\geq0$, hence $\nabla u(\hat{x})\cdot\nabla v(\hat{x})=\int_X\lambda \nabla\cdot((\gamma-\gamma_0)\nabla u)\geq0$.\\
\subitem {\bf Case 2.2}.  If $\mu\leq0$ then $g=\mu\partial\lambda/\partial\nu$ and we get
$\nabla u(\hat{x})\cdot \nabla v(\hat{x})=0$.
\end{enumerate}
\end{proof}

\subsection{Proof of Theorem  \ref{injectlambda}} We use the notation of Theorem  \ref{injectlambda}.
\begin{proof}
It is clear that $\big[y=0\big]\Rightarrow\big[\lambda\equiv0\big]\Rightarrow \big[\gamma_s\partial\lambda/\partial\nu\big|_{\partial X}\equiv 0\big]$. In the opposite direction. Assume  $\gamma_s\partial\lambda/\partial\nu\big|_{\partial X}=0$, then
$\lambda$ satisfies the equation
\begin{align*}
\begin{cases}\nabla\cdot(\gamma_s\nabla \lambda)= 0 &\textnormal{ in } X\setminus\{\hat{x}\}\\
\lambda=0&\textnormal{ in } \partial X\\
\gamma_s\frac{\partial\lambda}{\partial\nu}=0&\textnormal{ in } \partial X.
\end{cases}
\end{align*}
By unique continuation $\lambda\equiv 0$ in $X\setminus\{\hat{x}\}$. Since $\lambda\in L^p(x)$
we conclude $\lambda\equiv 0$. Finally, if $y\neq0$ let $\varphi\in C_0^\infty(X)$ be such that
$\partial_y\varphi (\hat{x})\neq0$. Then $\lambda\neq 0\in L^p(X)$ since $\int_X\lambda \nabla\cdot(\gamma_s\nabla \varphi)=\partial_y\varphi (\hat{x})\neq0$.
\end{proof}

\subsection{Proof of Corollary \ref{estlambda}} We start with a lemma about injective linear maps defined over
a finite dimensional domain.

\begin{lemma}\label{linfin}
Let $I\subset\R$ be a closed bounded interval. Let $(V,|\cdot|_V)$ be a normed vector space.
Let $H_s:\R^n\to V, s\in I,$ be a family of injective linear functionals. Assume
\begin{align*}
\lim_{t\in I,t\to s} H_t(y)=H_s(y), \forall y\in \R^n,\forall s\in I.
\end{align*}
Then there exist constants $0<a,b<\infty$ such that $\forall s\in I$ 
\begin{align*}
a|y|\leq|H_s y|_V\leq b|y|,  \quad \forall y\in \R^n.
\end{align*}
\end{lemma}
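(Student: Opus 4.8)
The plan is to prove a uniform (in $s$) two-sided bound for a continuously varying family of injective linear maps out of $\R^n$, by combining the finite-dimensionality of the domain with a compactness argument in $s$. The upper bound is the easy direction and follows from finite-dimensionality plus pointwise continuity; the lower bound is where the work lies, and the obstacle is that we cannot directly use compactness of the unit sphere $S^{n-1}$ together with continuity in $s$ unless we first upgrade the hypothesis of \emph{pointwise} convergence $H_t(y)\to H_s(y)$ to something that behaves well uniformly on $S^{n-1}$; happily, pointwise convergence of linear maps on a finite-dimensional space \emph{is} automatically locally uniform on bounded sets, which is exactly what rescues the argument.

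First I would reduce to the unit sphere: since each $H_s$ is linear, it suffices to produce $0<a\le b<\infty$ with $a\le |H_s y|_V\le b$ for all $y\in S^{n-1}:=\{y\in\R^n:|y|=1\}$ and all $s\in I$. Next I would establish the \textbf{upper bound}. Fix the standard basis $e_1,\dots,e_n$ of $\R^n$. For $y=\sum y_i e_i\in S^{n-1}$ we have $|H_s y|_V\le \sum_i |y_i|\,|H_s e_i|_V \le \sqrt n\,\max_i |H_s e_i|_V$, so it is enough to bound $s\mapsto |H_s e_i|_V$ on $I$. For each fixed $i$, the map $s\mapsto H_s e_i\in V$ is continuous by hypothesis, hence $s\mapsto |H_s e_i|_V$ is a continuous real-valued function on the compact interval $I$ and therefore bounded; taking $b:=\sqrt n\,\max_i \sup_{s\in I}|H_s e_i|_V<\infty$ gives the upper bound.

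Now the \textbf{lower bound}, which is the heart of the matter. I first observe that pointwise continuity of $s\mapsto H_s(y)$ for each $y$, together with linearity and the finite dimension of the domain, forces the convergence to be uniform over the compact set $S^{n-1}$: indeed for $y,y'\in S^{n-1}$ and $t,s\in I$,
\begin{align*}
\big| |H_t y|_V - |H_s y'|_V \big| \le |H_t y - H_s y'|_V \le |H_t(y-y')|_V + |(H_t-H_s)y'|_V \le b\,|y-y'| + |(H_t-H_s)y'|_V,
\end{align*}
and the term $|(H_t-H_s)y'|_V$ can be made small uniformly in $y'\in S^{n-1}$ by writing $y'=\sum y'_i e_i$ and using $|(H_t-H_s)y'|_V\le \sqrt n\,\max_i |(H_t-H_s)e_i|_V$, which tends to $0$ as $t\to s$ by the hypothesis applied to the finitely many vectors $e_i$. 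Hence the function $\Phi:I\times S^{n-1}\to\R$, $\Phi(s,y):=|H_s y|_V$, is (jointly) continuous on the compact set $I\times S^{n-1}$, so it attains its minimum at some $(s_*,y_*)$. That minimum value $a:=\Phi(s_*,y_*)=|H_{s_*} y_*|_V$ is strictly positive, because $y_*\in S^{n-1}$ so $y_*\ne 0$ and $H_{s_*}$ is injective. Therefore $|H_s y|_V\ge a>0$ for all $s\in I$ and all $y\in S^{n-1}$, and by homogeneity $a|y|\le |H_s y|_V\le b|y|$ for all $y\in\R^n$ and $s\in I$, completing the proof.

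The only genuinely delicate point is the joint continuity of $\Phi$ on $I\times S^{n-1}$ from the merely pointwise-in-$y$ hypothesis, and as indicated it is resolved purely by the finite-dimensionality of the domain (expanding in the basis reduces uniformity over $S^{n-1}$ to the finitely many statements $|(H_t-H_s)e_i|_V\to 0$); no properties of $V$ beyond it being a normed space are needed, and in particular no reflexivity or completeness of $V$ is used. In the intended application $V$ will be $C^{k+2,\alpha}(\partial X)$ with $H_s(y):=\gamma_s\,\partial\lambda/\partial\nu|_{\partial X}$ (and separately $V=L^p(X)$ with $H_s(y):=\lambda$), whose injectivity in $y$ is Theorem~\ref{injectlambda} and whose continuity in $s$ is Theorem~\ref{existslambda}; the lemma then yields exactly the uniform estimates needed in Corollary~\ref{estlambda}.
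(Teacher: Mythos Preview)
Your proof is correct and follows essentially the same approach as the paper: the upper bound via basis vectors and compactness of $I$ is identical, and for the lower bound both arguments hinge on the same triangle-inequality decomposition $|H_t y - H_s y'|_V \le b|y-y'| + \max_i |(H_t-H_s)e_i|_V$ (up to constants), exploiting finite-dimensionality in exactly the same way. The only cosmetic difference is that you package this as joint continuity of $(s,y)\mapsto |H_s y|_V$ on the compact set $I\times S^{n-1}$ and take a minimum, whereas the paper runs the same estimate inside a sequence-based contradiction argument.
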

 
\begin{proof} Let $\{e_i\}_{i=1}^n$ be a basis of $\R^n$. Since $I\ni s\mapsto H_s(e_i)$ are continuous and $I$ is compact,
$\max_{i=1,...,n}\sup_{s\in I} |H_s e_i|<\infty$. Since $H_s$ are linear, the existence of $b>0$ for the second inequality
follows.

Assume $\nexists a>0$ such that the first inequality holds. By the compactness of $I$ and
the linearity of each $H_t$, there exists $s\in I$ and $I\ni t\to s$, together with
$y_t\in \R^n,|y_t|=1, y_t \stackrel{t\to s}{\longrightarrow} y_s$,  such that $|H_t y_t|\stackrel{t\to s}{\longrightarrow} 0$.
 But then (using $|H_t (y_s-y_t)|_V\leq b|y_s-y_t|$)
\begin{align*}
0\leq |H_s y_s|_V\leq |H_s(y_s) - H_t(y_s)|_V+|H_t (y_s-y_t)|_V+ |H_t y_t|_V\stackrel{t\to s}{\longrightarrow}0.
\end{align*}
Hence $ H_s y_s=0$, contradicting the injectivity of $H_s$ since $|y_s|=1$.
\end{proof}

\begin{proof}[{\bf Proof of Corollary \ref{estlambda}}] From Theorem \ref{injectlambda},
the linear maps $\R^n\ni y\mapsto \lambda \in L^p(X)$ and
$\R^n\ni y\mapsto \gamma_s\partial\lambda/\partial\nu \in C^{k+2,\alpha}(\partial X)\subset L^2(\partial X)$
are injective $\forall s\in[0,1]$ ($\lambda$ is the solution of $(A_s)$). From Theorem \ref{existslambda},
for $y\in \R^n$ fixed, $\lambda \in L^p(X)$ and $\gamma_s\partial\lambda/\partial\nu \in
C^{k+2,\alpha}(\partial X)\subset L^2(\partial X)$ depend continuously on $s\in[0,1]$. Lemma \ref{linfin}
then implies that all the quantities
\begin{align*} |y|, |\lambda|_{L^p(X)}, |\gamma_s\partial \lambda/\partial\nu|_{k+2,\alpha,\partial X}
\textnormal{ and } |\gamma_s\partial \lambda/\partial\nu|_{L^2(\partial X)}
\end{align*}
are comparable uniformly $\forall s\in[0,1]$. The last statement of Corollary \ref{estlambda}
is true for any  quotient of two Lipschitz function in a set where the denominator is bounded
away from zero.
\end{proof}

\subsection{Proof of Theorem \ref{secondpropF}} Let us recall the definition of $F:C^{k+2,\alpha}(\partial X)\times[0,1]\to C^{k+2,\alpha}(\partial X)$. Given $f\in C^{k+2,\alpha}(\partial X)$ and
$s\in[0,1]$, let $u\in C^{k+2,\alpha}(X)$ be the solution of
\begin{align*}
\begin{cases}\nabla\cdot(\gamma_s\nabla u)=0 &\textnormal{ in } X\\
u=f &\textnormal{ in } \partial X.
\end{cases}
\end{align*}
Let $\lambda$ be the solution of
\begin{align*}
\begin{cases}\nabla\cdot(\gamma_s\nabla \lambda)=\nabla u(\hat{x}) \cdot \nabla \delta_{\hat{x}}&\textnormal{ in } X\\
\lambda=0&\textnormal{ in } \partial X.
\end{cases}
\end{align*}
If $\nabla u(\hat{x}) = 0$ let $\mu>0$, otherwise let
\begin{align*}
\mu= \frac{\int_X\lambda\nabla \cdot((\gamma-\gamma_0)\nabla u)}{\qquad \Big|\gamma_s\frac{\partial\lambda}{\partial\nu}\Big|^2_{L^2(\partial X)}}.
\end{align*}
We defined
\begin{align*}
F(f,s):= \begin{cases}
0& \textnormal{ if } \mu\geq 0\\
\mu \gamma_s(\frac{\partial\lambda}{\partial \nu})& \textnormal{ if } \mu\leq 0.
\end{cases}
\end{align*}

\begin{lemma}\label{boundF}
 There exists a constant $\kappa>0$ independent of $s\in[0,1]$ such that
\begin{align*}
|F(f,s)|_{k+2,\alpha,\partial X}\leq \kappa |f|_{k+2,\alpha,\partial X}.
\end{align*}
\end{lemma}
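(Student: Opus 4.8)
The plan is to estimate $|F(f,s)|_{k+2,\alpha,\partial X}$ by tracing the definition of $F$ through the chain of elliptic estimates assembled earlier, and in the nontrivial case extracting a gain from the definition of $\mu$. Since $F(f,s)$ is either $0$ or $\mu\gamma_s\partial\lambda/\partial\nu$, the bound is obvious when $\nabla u(\hat x)=0$ (then $F=0$), so assume $\nabla u(\hat x)\neq 0$ and $\mu\leq 0$, and write $F(f,s)=\mu\,\gamma_s\partial\lambda/\partial\nu$.

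First I would control the pieces of $\mu$. The numerator $\int_X\lambda\,\nabla\cdot((\gamma-\gamma_0)\nabla u)$ is bounded by $|\lambda|_{L^p(X)}$ times a $W^{-1,p'}$-type norm of $\nabla\cdot((\gamma-\gamma_0)\nabla u)$, which by the fixed regularity of $\gamma,\gamma_0$ is controlled by $|u|_{k+2,\alpha,\overline X}\le\kappa|f|_{k+2,\alpha,\partial X}$ via Theorem \ref{mainu}; alternatively one integrates by parts and uses $|\nabla\lambda|$ bounds away from $\hat x$ together with the singular-part decomposition, but the cleanest route is the duality pairing $L^p$–$(C^{k+2,\alpha})^*$ already invoked in the excerpt. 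Thus the numerator is $\le C\,|\lambda|_{L^p(X)}\,|f|_{k+2,\alpha,\partial X}$. Here $\lambda$ is the solution of $(A_s)$ with $y=\nabla u(\hat x)$, so Corollary \ref{estlambda} applies: $|\lambda|_{L^p(X)}\le b\,|\gamma_s\partial\lambda/\partial\nu|_{L^2(\partial X)}$ and also $|\lambda|_{L^p(X)}\le \eta|y|=\eta|\nabla u(\hat x)|$, while the denominator of $\mu$ is exactly $|\gamma_s\partial\lambda/\partial\nu|^2_{L^2(\partial X)}$. Hence
\begin{align*}
|\mu|\le \frac{C\,|\lambda|_{L^p(X)}\,|f|_{k+2,\alpha,\partial X}}{|\gamma_s\partial\lambda/\partial\nu|^2_{L^2(\partial X)}}
\le \frac{C\,b\,|f|_{k+2,\alpha,\partial X}}{|\gamma_s\partial\lambda/\partial\nu|_{L^2(\partial X)}}.
\end{align*}

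Now multiply back: using the left-hand chain of Corollary \ref{estlambda}, $|\gamma_s\partial\lambda/\partial\nu|_{k+2,\alpha,\partial X}\le \frac1a|\lambda|_{L^p(X)}\le \frac{b}{a}|\gamma_s\partial\lambda/\partial\nu|_{L^2(\partial X)}$, so
\begin{align*}
|F(f,s)|_{k+2,\alpha,\partial X}=|\mu|\,\big|\gamma_s\tfrac{\partial\lambda}{\partial\nu}\big|_{k+2,\alpha,\partial X}
\le \frac{C b}{|\gamma_s\partial\lambda/\partial\nu|_{L^2(\partial X)}}\cdot\frac{b}{a}|\gamma_s\tfrac{\partial\lambda}{\partial\nu}|_{L^2(\partial X)}\cdot|f|_{k+2,\alpha,\partial X},
\end{align*}
and the $L^2(\partial X)$-norms cancel, yielding $|F(f,s)|_{k+2,\alpha,\partial X}\le \kappa\,|f|_{k+2,\alpha,\partial X}$ with $\kappa=Cb^2/a$ independent of $s$ (all of $a,b,\eta$, and the elliptic constant $C$ being $s$-uniform). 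The main obstacle, and the only place needing care, is making rigorous the estimate of the numerator of $\mu$ by $|\lambda|_{L^p(X)}\,|u|_{k+2,\alpha,\overline X}$: since $\nabla\cdot((\gamma-\gamma_0)\nabla u)$ need not be regular enough to pair naively with $\lambda\in L^p$, one should either keep it in divergence form and integrate by parts (moving one derivative onto $\lambda$, which is only in $W^{1,p}$ locally near $\hat x$ — handled by the singular decomposition $\lambda=w_K+v+w$ of Theorem \ref{lambda}, whose singular part $w_K$ pairs with smooth data by construction), or simply note $\nabla\cdot((\gamma-\gamma_0)\nabla u)\in C^{k,\alpha}(\overline X)\subset L^{p'}(X)$ so the $L^p$–$L^{p'}$ pairing is legitimate and bounded by $|\lambda|_{L^p(X)}\,|\gamma-\gamma_0|_{k+1,\alpha}\,|u|_{k+2,\alpha,\overline X}$; the latter observation is the quickest and suffices. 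Everything else is bookkeeping with the constants already provided as $s$-uniform by Corollary \ref{estlambda} and Theorem \ref{mainu}.
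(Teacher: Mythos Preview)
Your proposal is correct and follows essentially the same route as the paper: reduce to the case $F(f,s)=\mu\,\gamma_s\partial\lambda/\partial\nu$, bound the numerator of $\mu$ by H\"older (using $\nabla\cdot((\gamma-\gamma_0)\nabla u)\in C^{k,\alpha}(\overline X)\subset L^{p'}(X)$ against $\lambda\in L^p(X)$), then invoke Corollary~\ref{estlambda} to compare $|\lambda|_{L^p}$, $|\gamma_s\partial\lambda/\partial\nu|_{k+2,\alpha,\partial X}$ and $|\gamma_s\partial\lambda/\partial\nu|_{L^2(\partial X)}$ so that the $L^2$ norms cancel, and finish with Theorem~\ref{mainu}. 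The paper writes the same chain of inequalities in one block rather than isolating $|\mu|$ first, and your closing worry about the pairing is already resolved by your own observation that $\nabla\cdot((\gamma-\gamma_0)\nabla u)\in L^{p'}(X)$.
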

\begin{proof}

\end{proof} When $F(f,s)\equiv0$ there is nothing to prove. Otherwise ($\lambda\neq0$)
\begin{align*}
|F(f,s)|_{k+2,\alpha,\partial X}&=\Big|\mu \gamma_s\frac{\partial\lambda}{\partial \nu}\Big|_{k+2,\alpha,\partial X}\\
&=\frac{ \Big|\int_X\lambda\nabla \cdot((\gamma-\gamma_0)\nabla u)\Big|}{\qquad \Big|\gamma_s
\frac{\partial\lambda}{\partial\nu}\Big|^2_{L^2(\partial X)}}
\Big|\gamma_s\frac{\partial\lambda}{\partial \nu}\Big|_{k+2,\alpha,\partial X}\\
&\leq \Big|\int_X\frac{\lambda}{|\gamma_s
\frac{\partial\lambda}{\partial\nu}|_{L^2(\partial X)}}\nabla \cdot((\gamma-\gamma_0)\nabla u)\Big|
\frac{|\gamma_s\frac{\partial \lambda}{\partial \nu}|_{k+2,\alpha,\partial X}}
{|\gamma_s\frac{\partial\lambda}{\partial \nu}|_{L^2(\partial X)}}\\
&\leq \frac{|\lambda|_{L^p(X)}}{|\gamma_s\frac{\partial\lambda}{\partial \nu}|_{L^2(\partial X)}}
\frac{|\gamma_s\frac{\partial \lambda}{\partial \nu}|_{k+2,\alpha,\partial X}}
{|\gamma_s\frac{\partial\lambda}{\partial \nu}|_{L^2(\partial X)}}\Big|\nabla \cdot((\gamma-\gamma_0)\nabla u)
\Big|_{L^{p/(p-1)}(X)}\\
&\leq \tilde{\kappa} |u|_{2,X}\\
&\leq \kappa |f|_{k+2,\alpha,\partial X}.
\end{align*}
We used H\"older inequality to go from the third to the fourth line. Corollary \ref{estlambda} and the
boundedness of $X$ to go from the fourth
to the fifth line, and Theorem \ref{mainu} to go from the fifth to the last line.

\begin{Def} Given  $\eta>0$ and $s\in[0,1]$ let us define the set 
$N_{\eta,s}\subset C^{k+2,\alpha}(\partial X)$ as follows,
$f\in N_{\eta,s}$ if and only if the solution $u$
of the equation 
\begin{align*}
\begin{cases}\nabla\cdot(\gamma_s\nabla u)=0 &\textnormal{ in } X\\
u=f &\textnormal{ in } \partial X
\end{cases}
\end{align*}
satisfies $|\nabla u(\hat{x})|>\eta$.
\end{Def}

\begin{lemma}\label{lipschitz} Fix $\eta>0$, for $f\in N_{\eta,s}\subset C^{k+2,\alpha}(\partial X)$ and $s\in[0,1]$ let
$u, \lambda$ and $\mu$ be the ones involved in the definition of $F(f,s)$. Then
\begin{enumerate}
\item[$\bullet$] $N_{\eta,s}\ni f\mapsto \lambda/|\gamma_s\frac{\partial\lambda}{\partial \nu}|_{L^2(\partial X)}
\in L^p(X)$ is Lipschitz continuous and bounded, uniformly in $s\in[0,1]$.\\
\item[$\bullet$] $N_{\eta,s}\ni f\mapsto \gamma_s\frac{\partial\lambda}{\partial \nu}/|\gamma_s\frac{\partial\lambda}{\partial \nu}
|_{L^2(\partial X)} \in C^{k+2,\alpha}(\partial X)$ is Lipschitz continuous and bounded, uniformly in $s\in[0,1]$.\\
\item[$\bullet$] $N_{\eta,s}\ni f\mapsto u \in C^{k+2,\alpha}(X)$ is linear continuous, uniformly in $s\in[0,1]$.
\end{enumerate} 
\end{lemma}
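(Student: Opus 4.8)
The plan is to factor every map in the statement through the finite-dimensional space $\R^n$, thereby reducing the Lipschitz and boundedness claims to the estimates already recorded in Corollary \ref{estlambda}. The crucial observation is that, for a fixed $s$, the auxiliary solution $\lambda$ occurring in Definition \ref{definitionF} depends on $f$ only through the vector $y:=\nabla u(\hat{x})\in\R^n$: since $\nabla u(\hat{x})\cdot\nabla\delta_{\hat{x}}=\partial_y\delta_{\hat{x}}$, this $\lambda$ is precisely the solution of the auxiliary problem $(A_s)$ attached to that $y$, and likewise $\gamma_s\,\partial\lambda/\partial\nu$ is a function of $(y,s)$ alone. So the infinite-dimensional Lipschitz questions in the first two bullets are genuinely questions about a map on a region of $\R^n$, which Corollary \ref{estlambda} has already settled.

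I would first dispatch the third bullet. Linearity of $f\mapsto u$ is the linearity of the Dirichlet problem $(P_s)$; the uniform continuity follows from Theorem \ref{mainu}, because $\min(1,c)\le\gamma_s\le\max(1,C)$ for all $s\in[0,1]$, so the constant $\kappa$ there may be chosen independent of $s$, giving $|u|_{k+2,\alpha,X}\le\kappa\,|f|_{k+2,\alpha,\partial X}$. The same inequality, together with the fact that point evaluation of the gradient is dominated by the $C^{k+2,\alpha}$-norm, shows that the linear map $f\mapsto y=\nabla u(\hat{x})$ satisfies $|y|\le\kappa\,|f|_{k+2,\alpha,\partial X}$, i.e.\ it is Lipschitz with an $s$-independent constant; and by the very definition of $N_{\eta,s}$ it carries $N_{\eta,s}$ into $\{y\in\R^n:\ |y|>\eta\}$.

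Next I would invoke Corollary \ref{estlambda} with this same $\eta$: it asserts that, viewed as functions of $y$,
\[
y\mapsto\frac{\lambda}{|\gamma_s\partial\lambda/\partial\nu|_{L^2(\partial X)}}\in L^p(X),\qquad
y\mapsto\frac{\gamma_s\partial\lambda/\partial\nu}{|\gamma_s\partial\lambda/\partial\nu|_{L^2(\partial X)}}\in C^{k+2,\alpha}(\partial X)
\]
are Lipschitz on $\{|y|\ge\eta\}$ with constants independent of $s\in[0,1]$; moreover the displayed chain of inequalities in that corollary yields the uniform bounds $|\lambda|_{L^p(X)}/|\gamma_s\partial\lambda/\partial\nu|_{L^2(\partial X)}\le b$ and $|\gamma_s\partial\lambda/\partial\nu|_{k+2,\alpha,\partial X}/|\gamma_s\partial\lambda/\partial\nu|_{L^2(\partial X)}\le b/a$. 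Composing these two maps with the uniformly Lipschitz linear map $f\mapsto y$ of the previous paragraph (whose image lies in $\{|y|\ge\eta\}$) immediately gives that the maps in the first two bullets are Lipschitz continuous and bounded on $N_{\eta,s}$, with all constants independent of $s$.

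There is no genuine analytic difficulty remaining once Corollary \ref{estlambda} is available; the one point that must be handled with care is the bookkeeping behind the reduction — recognizing that $\lambda$ and its conormal boundary derivative depend on $f$ solely through $\nabla u(\hat{x})$, that $N_{\eta,s}$ is mapped into exactly the region $\{|y|\ge\eta\}$ on which Corollary \ref{estlambda} is valid, and that every constant produced along the way (by Theorem \ref{mainu} and by Corollary \ref{estlambda}) is uniform in $s$ because the coefficients $\gamma_s$ all share the ellipticity bounds $\min(1,c)$ and $\max(1,C)$.
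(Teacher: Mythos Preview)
Your proposal is correct and follows essentially the same route as the paper: the paper's own proof is a one-line appeal to Theorem \ref{mainu}, the definition of $\lambda$, and Corollary \ref{estlambda}, and your argument is precisely the spelled-out version of that appeal --- factoring through $y=\nabla u(\hat{x})\in\R^n$ via Theorem \ref{mainu}, noting that $N_{\eta,s}$ lands in $\{|y|\ge\eta\}$, and composing with the Lipschitz maps from Corollary \ref{estlambda}.
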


\begin{proof}
The last property is a direct consequence of Theorem \ref{mainu}. The first two properties are quickly deduced from
Theorem \ref{mainu}, the definition of $\lambda$ and Corollary \ref{estlambda}.
\end{proof}

\begin{theorem}\label{lipschitzF}
Given $\eta>0$ there exists $\kappa>0$ such that $\forall s\in[0,1], \forall f_1,f_2\in N_{\eta,s}$
\begin{align*}
|F(f_1,s) - F(f_2,s)|_{k+2,\alpha,\partial X}\leq \kappa(1+|f_1|_{k+2,\alpha,\partial X}+|f_2|_{k+2,\alpha,\partial X})
|f_1 - f_2|_{k+2,\alpha,\partial X}.
\end{align*}
\end{theorem}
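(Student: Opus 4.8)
The plan is to bound the difference $F(f_1,s)-F(f_2,s)$ by decomposing it into pieces, each of which is controlled by Lemma~\ref{lipschitz}. First I would recall that, for $i=1,2$, we have $F(f_i,s)=\tilde\mu_i\,\gamma_s\partial\lambda_i/\partial\nu$ with $\tilde\mu_i=\min(0,\mu_i)$ and
\[
\mu_i = \frac{\int_X \lambda_i\,\nabla\cdot((\gamma-\gamma_0)\nabla u_i)}{\;\big|\gamma_s\partial\lambda_i/\partial\nu\big|^2_{L^2(\partial X)}},
\]
where $u_i,\lambda_i$ are the solutions associated to $f_i$ as in Definition~\ref{definitionF}. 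The key algebraic observation is that $F(f_i,s)$ can be rewritten so that every factor is one of the \emph{normalized} quantities appearing in Lemma~\ref{lipschitz}: namely
\[
F(f_i,s) = \min\!\Big(0,\ \int_X \frac{\lambda_i}{|\gamma_s\partial\lambda_i/\partial\nu|_{L^2(\partial X)}}\,\nabla\cdot((\gamma-\gamma_0)\nabla u_i)\Big)\cdot \frac{\gamma_s\partial\lambda_i/\partial\nu}{|\gamma_s\partial\lambda_i/\partial\nu|_{L^2(\partial X)}}.
\]
Call $\Lambda_i := \lambda_i/|\gamma_s\partial\lambda_i/\partial\nu|_{L^2(\partial X)}\in L^p(X)$, $\Theta_i := \gamma_s\partial\lambda_i/\partial\nu\,/\,|\gamma_s\partial\lambda_i/\partial\nu|_{L^2(\partial X)}\in C^{k+2,\alpha}(\partial X)$, and $m_i := \min(0,\int_X \Lambda_i\,\nabla\cdot((\gamma-\gamma_0)\nabla u_i))$, so $F(f_i,s)=m_i\Theta_i$.

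Next I would estimate each of $\Lambda_i,\Theta_i,m_i$ and their differences. By Lemma~\ref{lipschitz} (first two bullets), $f\mapsto\Lambda$ and $f\mapsto\Theta$ are bounded and Lipschitz on $N_{\eta,s}$ uniformly in $s$; in particular $|\Theta_i|_{k+2,\alpha,\partial X}\le C$ and $|\Theta_1-\Theta_2|_{k+2,\alpha,\partial X}\le C|f_1-f_2|_{k+2,\alpha,\partial X}$, and similarly for $\Lambda$ in $L^p(X)$. For the scalars $m_i$: since $x\mapsto\min(0,x)$ is $1$-Lipschitz on $\R$, it suffices to bound the difference of the integrals $\int_X\Lambda_i\,\nabla\cdot((\gamma-\gamma_0)\nabla u_i)$. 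Writing $\nabla\cdot((\gamma-\gamma_0)\nabla u_i)=:\phi_i\in C^{k,\alpha}(\overline X)\subset L^{p/(p-1)}(X)$, the third bullet of Lemma~\ref{lipschitz} and the linearity of $u\mapsto\phi$ give $|\phi_i|_{L^{p/(p-1)}(X)}\le C|f_i|_{k+2,\alpha,\partial X}$ and $|\phi_1-\phi_2|_{L^{p/(p-1)}(X)}\le C|f_1-f_2|_{k+2,\alpha,\partial X}$ (using boundedness of $X$ and Theorem~\ref{mainu}). Then Hölder's inequality and the ``add and subtract'' trick
\[
\int_X \Lambda_1\phi_1 - \int_X\Lambda_2\phi_2 = \int_X(\Lambda_1-\Lambda_2)\phi_1 + \int_X\Lambda_2(\phi_1-\phi_2)
\]
yield $|m_1-m_2|\le C(|f_1|_{k+2,\alpha,\partial X}+1)|f_1-f_2|_{k+2,\alpha,\partial X}$, while $|m_i|\le C|f_i|_{k+2,\alpha,\partial X}$ (this last is essentially Lemma~\ref{boundF}). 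Finally, from $F(f_i,s)=m_i\Theta_i$,
\[
F(f_1,s)-F(f_2,s) = (m_1-m_2)\Theta_1 + m_2(\Theta_1-\Theta_2),
\]
and taking $|\cdot|_{k+2,\alpha,\partial X}$ norms and inserting the four bounds above produces exactly the claimed inequality with the factor $(1+|f_1|_{k+2,\alpha,\partial X}+|f_2|_{k+2,\alpha,\partial X})$.

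The routine parts are the Hölder estimates and the elliptic bounds, which are immediate from Theorem~\ref{mainu}. The one point requiring a little care — and the main obstacle — is making sure the Lipschitz and boundedness claims of Lemma~\ref{lipschitz} genuinely hold \emph{uniformly in $s\in[0,1]$} and that the constants do not degenerate as $|\nabla u(\hat x)|\downarrow\eta$; this is exactly where Corollary~\ref{estlambda} is essential, since it pins the ratios $|\gamma_s\partial\lambda/\partial\nu|_{k+2,\alpha,\partial X}/|\gamma_s\partial\lambda/\partial\nu|_{L^2(\partial X)}$ and $|\lambda|_{L^p(X)}/|\gamma_s\partial\lambda/\partial\nu|_{L^2(\partial X)}$ between fixed positive constants for $|y|=|\nabla u(\hat x)|\ge\eta$, uniformly in $s$, and guarantees that $y\mapsto\Lambda,\Theta$ are uniformly Lipschitz there. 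Since the map $f\mapsto\nabla u(\hat x)=y$ is linear and bounded (Theorem~\ref{mainu} plus Sobolev embedding at the interior point $\hat x$), composing with those uniform Lipschitz bounds on $\{|y|\ge\eta\}$ transfers everything to $f\in N_{\eta,s}$, and the constant $\kappa$ can be chosen depending only on $\eta$ (and the fixed data $n,k,\alpha,c,C,X,\gamma,\hat x$), as required.
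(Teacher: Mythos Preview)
Your proof is correct and follows essentially the same approach as the paper: both recognize $F(f,s)$ as a product of the normalized quantities from Lemma~\ref{lipschitz} with the scalar $\int_X\Lambda\,\nabla\cdot((\gamma-\gamma_0)\nabla u)$, then use Lemma~\ref{lipschitz} and Corollary~\ref{estlambda} to bound each factor and its difference. The only cosmetic difference is that the paper handles the cutoff at $\mu=0$ by an explicit three-case split on the signs of $\mu_1,\mu_2$, whereas you absorb all three cases at once via the $1$-Lipschitz property of $x\mapsto\min(0,x)$; your packaging is cleaner but the content is the same.
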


\begin{proof} Let $u_i,\lambda_i,\mu_i, i=1,2$ be the values appearing in the definitions of
$F(f_1,s)$ and $F(f_2,s)$ correspondingly.

If $\mu_1,\mu_2\geq 0$ then $F(f_1,s)\equiv F(f_2,s)\equiv 0$ and $|F(f_1,s) - F(f_2,s)|_{k+2,\alpha,\partial X}=0$.

If $\mu_1\geq 0$ and $ \mu_2\leq 0$ then $F(f_1,s)=0$ and
\begin{align*}
|F(f_1,s) - F(f_2,s)|_{k+2,\alpha,\partial X}&=|F(f_2,s)|_{k+2,\alpha,\partial X}\\
& = \Big|\int_X\frac{\lambda_2}{|\gamma_s\frac{\partial\lambda_2}{\partial\nu}|_{L^2(\partial X)}}\nabla \cdot((\gamma-\gamma_0)\nabla u_2)\Big|
\frac{|\gamma_s\frac{\partial \lambda_2}{\partial \nu}|_{k+2,\alpha,\partial X}}
{|\gamma_s\frac{\partial\lambda_2}{\partial \nu}|_{L^2(\partial X)}}\\
& \leq \rho \Big|\int_X\frac{\lambda_2}{|\gamma_s\frac{\partial\lambda_2}{\partial\nu}|_{L^2(\partial X)}}\nabla \cdot((\gamma-\gamma_0)\nabla u_2)\Big|\\
& \leq \rho \Big|\int_X\frac{\lambda_2}{|\gamma_s\frac{\partial\lambda_2}{\partial\nu}|_{L^2(\partial X)}}\nabla \cdot((\gamma-\gamma_0)\nabla u_2)\\
&\qquad \qquad - \int_X\frac{\lambda_1}{|\gamma_s\frac{\partial\lambda_1}{\partial\nu}|_{L^2(\partial X)}}\nabla \cdot((\gamma-\gamma_0)\nabla u_1)\Big|.
\end{align*}
$\mbox{}$From the second to the third line we used Corolarry \ref{estlambda}. From the third to the last line we used the fact
that each integral has the same sign as the corresponding $\mu_i$, and we are in the case of $\mu_i$'s
with opposite signs.

If $\mu_i,\mu_2\leq0$ then 
\begin{align*}
|F(f_1,s) - F(f_2,s)|_{k+2,\alpha,\partial X} = \Big|\int_X\frac{\lambda_1}{|\gamma_s\frac{\partial\lambda_1}{\partial\nu}|_{L^2(\partial X)}}\nabla \cdot((\gamma-\gamma_0)\nabla u_1)
\frac{\gamma_s\frac{\partial \lambda_1}{\partial \nu}}
{|\gamma_s\frac{\partial\lambda_1}{\partial \nu}|_{L^2(\partial X)}}\\
 \qquad - \int_X\frac{\lambda_2}{|\gamma_s\frac{\partial\lambda_2}{\partial\nu}|_{L^2(\partial X)}}\nabla \cdot((\gamma-\gamma_0)\nabla u_2)
\frac{\gamma_s\frac{\partial \lambda_2}{\partial \nu}}
{|\gamma_s\frac{\partial\lambda_2}{\partial \nu}|_{L^2(\partial X)}}\Big|_{k+2,\alpha,\partial X}.
\end{align*}
Using Lemma \ref{lipschitz} we observe that in any of the three cases, we are left with products of
bounded Lipschitz functions and one continuous linear function, all bounds being uniform in $s\in[0,1]$, which readily implies the estimate above.
\end{proof}

\begin{proof}[{\bf Proof of Theorem \ref{secondpropF}}] Let $\eta>0$ be such that $f_0\in N_{\eta,0}$
and let $\rho >0$ be such that $|F(f,s)|_{k+2,\partial X}\leq \rho |f|_{k+2,\partial X}$ for all
$f\in C^{k+2}(\partial X), \forall s\in[0,1]$ (such $\rho$ exists by Lemma \ref{boundF}).

In order to prove Theorem \ref{secondpropF}, it is enough to
show that there is $\{f_s\}_{s\in[0,1]}\in C\big([0,1];C^{k+2,\alpha}(\partial X)\big)$, such that $\forall s \in[0,1]$,
\begin{align}\label{inteq}
f_s = f_0 + \int_0^s F(f_\tau,\tau) d\tau.
\end{align}
Writing the initial value problem in this integral form, the uniqueness of the solution will
be consequence of the existence proof (Step 2 below, which uses a Banach fixed point argument),
the continuous differentiability in $s$ will be automatic from the continuity of $s\mapsto f_s$ and the
continuity of $F$ (Theorem \ref{lipschitzF}).

To prove that there exists $\{f_s\}_{s\in[0,1]}$ satisfying Equation (\ref{inteq}) $\forall s\in[0,1]$,
we follow the proof of Picard-Lindel\"of Theorem for ODEs with some small modifications.
The proof is done in two steps.
\begin{lemma}[Step 1] Let $0<t\leq1$. If $\{f_s\}_{s\in[0,t)} \subset C^{k+2,\alpha}(\partial X)$
satisfies Equation (\ref{inteq}) $\forall s\in[0,t)$, then $f_t:=(\lim_{s\to t^-}f_s) \in C^{k+2,\alpha}(\partial X)$
exists and $f_t\in N_{\eta,t}$ (starting with $f_0\in N_{\eta,0}$). 
\end{lemma}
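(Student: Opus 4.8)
The plan is to prove Step 1 in two parts: first the existence of the limit $f_t$ in $C^{k+2,\alpha}(\partial X)$, then the membership $f_t \in N_{\eta,t}$. For the first part, I would argue that on $[0,t)$ the solution stays in a region where the Lipschitz bound of Theorem \ref{lipschitzF} is effective. Concretely, as long as $\{f_s\}$ remains in $\bigcup_s N_{\eta,s}$, Lemma \ref{boundF} gives $|F(f_s,s)|_{k+2,\alpha,\partial X}\le \rho|f_s|_{k+2,\alpha,\partial X}$, so from the integral equation (\ref{inteq}) and Gr\"onwall's inequality one gets $|f_s|_{k+2,\alpha,\partial X}\le |f_0|_{k+2,\alpha,\partial X}e^{\rho s}\le |f_0|_{k+2,\alpha,\partial X}e^{\rho}=:M$ for all $s\in[0,t)$. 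Hence $|F(f_s,s)|_{k+2,\alpha,\partial X}\le \rho M$ is uniformly bounded on $[0,t)$, so for $s_1<s_2<t$ we have $|f_{s_2}-f_{s_1}|_{k+2,\alpha,\partial X}=|\int_{s_1}^{s_2}F(f_\tau,\tau)d\tau|_{k+2,\alpha,\partial X}\le \rho M(s_2-s_1)$, which shows $s\mapsto f_s$ is Lipschitz, hence uniformly continuous, on $[0,t)$; since $C^{k+2,\alpha}(\partial X)$ is complete (Banach), the limit $f_t=\lim_{s\to t^-}f_s$ exists in $C^{k+2,\alpha}(\partial X)$, and passing to the limit in (\ref{inteq}) shows $f_t=f_0+\int_0^t F(f_\tau,\tau)d\tau$.

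For the second part, I need to show $f_t\in N_{\eta,t}$, i.e. that the solution $u_t$ of $(P_t)$ with boundary data $f_t$ satisfies $|\nabla u_t(\hat x)|>\eta$. Here I would use the design property of $F$: Theorem \ref{firstpropF} says that $\nabla u_s(\hat x)\cdot\nabla v(\hat x)\ge 0$ where $v=u_s'$ is the $s$-derivative of the solution along the flow, and by Theorem \ref{frame} this means $\frac{d}{ds}\big(\tfrac12|\nabla u_s(\hat x)|^2\big)=\nabla u_s(\hat x)\cdot\nabla u_s'(\hat x)\ge 0$ on $[0,t)$. Therefore $s\mapsto |\nabla u_s(\hat x)|$ is non-decreasing on $[0,t)$, so $|\nabla u_s(\hat x)|\ge |\nabla u_0(\hat x)|>\eta$ for all $s\in[0,t)$ (using $f_0\in N_{\eta,0}$). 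But this inequality is not yet strict at $s=t$ in the limit; however, by Theorem \ref{frame} the map $s\mapsto u_s\in C^1([0,1];C^{k+2,\alpha}(\overline X))$ is continuous (applied to the continuous family $\{f_s\}_{s\in[0,t]}$ we just constructed), hence $s\mapsto|\nabla u_s(\hat x)|$ is continuous at $t$ from the left, giving $|\nabla u_t(\hat x)|=\lim_{s\to t^-}|\nabla u_s(\hat x)|\ge\eta$. To upgrade $\ge$ to $>$: we actually want a fixed $\eta$ chosen at the start so that $f_0\in N_{\eta,0}$ with strict inequality $|\nabla u_0(\hat x)|>\eta$ (possible since $|\nabla u_0(\hat x)|=1$, e.g. take $\eta=1/2$), and since the gradient norm is non-decreasing, $|\nabla u_t(\hat x)|\ge|\nabla u_0(\hat x)|>\eta$ with the strict inequality preserved — the non-decreasing property is what prevents the limit from dropping to exactly $\eta$.

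I expect the main subtlety to be the interplay between the a priori bound keeping $\{f_s\}$ bounded and the requirement that the Lipschitz/bound estimates of Lemma \ref{boundF} and Theorem \ref{lipschitzF} only hold while $f_s$ stays in some $N_{\eta,s}$: one must verify that the "barrier" $N_{\eta,s}$ is never exited on $[0,t)$, and this is exactly where the monotonicity of $|\nabla u_s(\hat x)|^2$ from Theorems \ref{frame} and \ref{firstpropF} is essential — it guarantees that the constraint defining $N_{\eta,s}$ is never violated, so the bounds remain valid all the way up to $t$ and the bootstrapping closes. The rest (completeness of the H\"older space, Gr\"onwall, continuity of $s\mapsto u_s$) is routine given the results already established in the excerpt.
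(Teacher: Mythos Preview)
Your proposal is correct and follows essentially the same route as the paper: Gr\"onwall via Lemma \ref{boundF} to bound $|f_s|$, then a Lipschitz-in-$s$ estimate and completeness of $C^{k+2,\alpha}(\partial X)$ to obtain the limit $f_t$, and finally monotonicity of $|\nabla u_s(\hat x)|$ from Theorems \ref{frame} and \ref{firstpropF} (together with the strict inequality $|\nabla u_0(\hat x)|>\eta$) to place $f_t\in N_{\eta,t}$. One small remark: the ``main subtlety'' you anticipate does not actually arise here, since the bound of Lemma \ref{boundF} holds for \emph{all} $f\in C^{k+2,\alpha}(\partial X)$, not only for $f\in N_{\eta,s}$ (it is Theorem \ref{lipschitzF}, used in Step 2, that requires membership in $N_{\eta,s}$), so the existence of the limit is obtained without any circular dependence on the monotonicity.
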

\begin{proof}[Proof of Step 1] If $\forall s\in[0,t)$
\begin{align*}
f_s = f_0 + \int_0^s F(f_\tau,\tau) d\tau,
\end{align*}
then $\forall s\in[0,t)$
\begin{align*}
|f_s|_{k+2,\alpha,\partial X} &\leq |f_0|_{k+2,\alpha,\partial X} + \int_0^s |F(f_\tau,\tau)|_{k+2,\alpha,\partial X} d\tau\\
&\leq |f_0|_{k+2,\alpha,\partial X} + \rho \int_0^s  |f_\tau|_{k+2,\alpha,\partial X} d\tau
\end{align*}
hence $|f_s|_{k+2,\alpha,\partial X} \leq e^{\rho s}|f_0|_{k+2,\alpha,\partial X}, \forall s\in[0,t)$. In particular,
for $0\leq s_1\leq s_2< t$
\begin{align*}
|f_{s_2}-f_{s_1}|_{k+2,\alpha,\partial X} &\leq \int_{s_1}^{s_2} |F(f_\tau,\tau)|_{k+2,\alpha,\partial X} d\tau\\
& \leq \rho e^{\rho t}|f_0|_{k+2,\alpha,\partial X} |s_2-s_1|,
\end{align*}
i.e., $\{f_s\}_{s\in[0,t)}$ is a Cauchy limit as $s\to t^-$. Since $C^{k+2,\alpha}(\partial X)$ is complete,
$f_t:=(\lim_{s\to t^-}f_s) \in C^{k+2,\alpha}(\partial X)$ exists. The inequality for 
$|f_{s_2}-f_{s_1}|_{k+2,\alpha,\partial X}$ also implies 
$\{f_s\}_{s\in [0,t]}$ continuous on $s\in[0,t]$, hence continuously differentiable in $s$, and since $f_0\in N_{\eta,0}$,
Theorem \ref{firstpropF} implies $f_s\in N_{\eta,s},\forall s\in [0,t]$.
\end{proof}
\begin{lemma}[Step 2] If $f\in N_{\eta,t}$ then there exists $\epsilon>0$ and 
a unique $\{f_s\}_{s\in[t,t+\epsilon)}\in C\big([t,t+\epsilon); C^{k+2,\alpha}(\partial X)\big)$
such that
\begin{align*}
f_s = f + \int_t^s F(f_\tau,\tau) d\tau,\quad \forall s\in[t,t+\epsilon).
\end{align*}
\end{lemma}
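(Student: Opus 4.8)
The plan is to run the standard Picard--Lindel\"of iteration on a short interval, using the Lipschitz estimate of Theorem \ref{lipschitzF} in place of a global Lipschitz bound. First I would fix $f\in N_{\eta,t}$ and choose $R>0$ and $\epsilon_0>0$ so that the closed ball $\overline{B}_R(f)\subset C^{k+2,\alpha}(\partial X)$ of radius $R$ around $f$ is contained in $N_{\eta,\tau}$ for all $\tau\in[t,t+\epsilon_0]$; this is possible because $f\mapsto\nabla u(\hat x)$ is continuous (Theorem \ref{mainu}) and $\gamma_\tau$ depends continuously on $\tau$, so the strict inequality $|\nabla u(\hat x)|>\eta$ persists on a neighborhood uniform in $\tau$ near $t$. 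On this ball the constants in Lemma \ref{boundF} and Theorem \ref{lipschitzF} apply, so $F(\cdot,\tau)$ is bounded by some $M$ and Lipschitz with constant $L=\kappa(1+2(|f|_{k+2,\alpha,\partial X}+R))$, both uniformly in $\tau\in[t,t+\epsilon_0]$.

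Next I would define the map $\Phi$ on $C\big([t,t+\epsilon];\overline{B}_R(f)\big)$ by $(\Phi g)_s = f + \int_t^s F(g_\tau,\tau)\,d\tau$ and check that, for $\epsilon\leq\epsilon_0$ small enough, $\Phi$ maps this complete metric space into itself and is a contraction. The self-mapping property follows from $|(\Phi g)_s - f|_{k+2,\alpha,\partial X}\leq \epsilon M\leq R$ once $\epsilon\leq R/M$; note also that each $(\Phi g)_s$ stays in $N_{\eta,\tau}$ by the choice of $R$, so $F(g_\tau,\tau)$ is well-defined along the iteration. The contraction property follows from
\begin{align*}
|(\Phi g)_s-(\Phi h)_s|_{k+2,\alpha,\partial X}\leq \int_t^s L\,|g_\tau-h_\tau|_{k+2,\alpha,\partial X}\,d\tau\leq \epsilon L\,\sup_{\tau}|g_\tau-h_\tau|_{k+2,\alpha,\partial X},
\end{align*}
which is a contraction once $\epsilon< 1/L$. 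Taking $\epsilon=\min\{\epsilon_0, R/M, 1/(2L)\}$ and applying the Banach fixed point theorem in $C\big([t,t+\epsilon];C^{k+2,\alpha}(\partial X)\big)$ yields a unique continuous $\{f_s\}_{s\in[t,t+\epsilon]}$ (hence also on $[t,t+\epsilon)$) solving the integral equation; restricting, this also gives uniqueness among all continuous solutions valued in $C^{k+2,\alpha}(\partial X)$, since any such solution must, by Step 1's a priori bound $|f_s|\leq e^{\rho(s-t)}|f|$, remain in $\overline{B}_R(f)$ for small time and hence coincide with the fixed point.

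The main obstacle is the preliminary step: verifying that $N_{\eta,t}$ is open and, more precisely, that one can choose the radius $R$ and the length $\epsilon_0$ so that membership in $N_{\eta,\tau}$ holds \emph{uniformly} for $\tau$ in a small interval, not just at the single parameter value $t$. This is what allows the fixed point argument to take place entirely inside a region where $F$ enjoys the bounds of Lemma \ref{boundF} and Theorem \ref{lipschitzF}; without the uniform-in-$\tau$ statement the Lipschitz constant could degenerate as $|\nabla u_\tau(\hat x)|$ approaches $\eta$. Everything else is a routine transcription of the Picard--Lindel\"of proof, with the only nonstandard feature being that the Lipschitz constant $L$ depends on $R$ and $|f|_{k+2,\alpha,\partial X}$ — which is harmless since both are fixed once $f$ is fixed.
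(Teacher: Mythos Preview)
Your proposal is correct and follows essentially the same route as the paper: both proofs first use continuous dependence (Theorem~\ref{mainu}) to find a radius ($R$ for you, $\delta$ in the paper) and a parameter-width $\epsilon_0$ so that the closed ball around $f$ sits in $N_{\eta,\tau}$ uniformly for $\tau\in[t,t+\epsilon_0)$, and then run a Banach fixed-point argument on the Picard map using Lemma~\ref{boundF} for self-mapping and Theorem~\ref{lipschitzF} for the contraction. The only extra care you take is the final remark on uniqueness outside the ball, which the paper leaves implicit; note that the link from the a~priori bound $|f_s|\le e^{\rho(s-t)}|f|$ to $f_s\in\overline{B}_R(f)$ needs the one-line estimate $|f_s-f|\le\int_t^s\rho|f_\tau|\,d\tau\le\rho(s-t)e^{\rho(s-t)}|f|$.
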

\begin{proof}[Proof of Step 2] Let us recall that $f\in N_{\eta,t} \subset  C^{k+2,\alpha}(\partial X)$ if and only if
the solution $u$ of the equation 
\begin{align*}
\begin{cases}\nabla\cdot(\gamma_t\nabla u)=0 &\textnormal{ in } X\\
u=f &\textnormal{ in } \partial X
\end{cases}
\end{align*}
satisfies $|\nabla u(\hat{x})|>\eta$. The smooth dependence of $u$ in terms of
the boundary condition and the equation coefficient (Theorem \ref{mainu}), implies the
existence of $\epsilon>0$ and $\delta>0$ such that if $h\in C^{k+2,\alpha}(\partial X)$
satisfies $|h|_{k+\alpha,\partial X}<\delta$, then $(f+h)\in N_{\eta,s},\forall s\in[t,t+\epsilon)$.

Let us consider the following non-empty closed set of $C\big([t,t+\epsilon); C^{k+2,\alpha}(\partial X)\big)$
\begin{align*}
\mathcal{F}=\{ \{f_s\}_{s\in[t,t+\epsilon)} \in C\big([t,t+\epsilon); C^{k+2,\alpha}(\partial X)\big) :
f_t=f, \sup_{s\in[t,t+\epsilon)} |f_s-f|_{k+2,\alpha,\partial X}<\delta\}.
\end{align*}
If $\epsilon>0$ is small enough, we can define the following operator $T:\mathcal{F}\to \mathcal{F}$
\begin{align*}
T(\{f_s\})_\sigma:=T(\{f_s\}_{s\in[t,t+\epsilon)})_\sigma := f + \int_t^\sigma F(f_\tau,\tau)d\tau, \quad 
\forall \sigma\in[t,t+\epsilon).
\end{align*}
Let us verify that $\{T(\{f_s\})_\sigma\}\in \mathcal{F}$ for
$\{f_s\}\in \mathcal{F}$. First $T(\{f_s\})_t=f$,
also
\begin{align*}
|T(\{f_s\})_\sigma - f |_{k+2,\alpha,\partial X}&\leq \int_t^\sigma |F(f_\tau,\tau)|_{k+2,\alpha,\partial X}d\tau\\
&\leq \epsilon \rho \sup_{\tau\in[t,\sigma]}|f_\tau|_{k+2,\alpha,\partial X}\\
&< \epsilon \rho (|f|_{k+2,\alpha,\partial X}+\delta)\\
&<\delta \quad \textnormal{ (if $0<\epsilon$ small enough) },
\end{align*}
and for $t\leq \sigma\leq r<t+\epsilon$
\begin{align*}
|T(\{f_s\})_\sigma - T(\{f_s\})_r|_{k+2,\alpha,\partial X}&\leq \int_r^\sigma |F(f_\tau,\tau)|_{k+2,\alpha,\partial X}d\tau\\
&< |\sigma - r| \rho (|f|_{k+2,\alpha,\partial X}+\delta)\\
\end{align*}
and hence  $\sigma\mapsto T(\{f_s\})_\sigma \in C^{k+2,\alpha}(\partial X)$ is continuous
and $\{T(\{f_s\})_\sigma\}\in \mathcal{F}$.

In addition, if $\{f_s\},\{g_s\}\in \mathcal{F}$, then for any $\sigma\in[t,t+\epsilon)$
\begin{align*}
|T(\{f_s\})_\sigma - T(\{g_s\})_\sigma|_{k+2,\alpha,\partial X}&\leq \int_t^\sigma |F(f_\tau,\tau)-F(g_\tau,\tau)|_{k+2,\alpha,\partial X}d\tau\\
&\leq \epsilon L \sup_{s\in[t,t+\epsilon)}[(1+ |f_s|_{k+2,\alpha,\partial X}+ |g_s|_{k+2,\alpha,\partial X})
|f_s-g_s|_{k+2,\alpha,\partial X}]\\
&\leq \epsilon \kappa (1+ 2|f|_{k+2,\alpha,\partial X}+ 2\delta)) \sup_{s\in[t,t+\epsilon)}|f_s-g_s|_{k+2,\alpha,\partial X}\\
&\leq \frac{1}{2} \sup_{s\in[t,t+\epsilon)}|f_s-g_s|_{k+2,\alpha,\partial X}\quad \textnormal{ (if $0<\epsilon$ small enough)}.
\end{align*}
To go from the second to the third line we used Theorem \ref{lipschitzF} and the fact that $f_s,g_s\in N_{\eta,s}$
for all $s\in[t,t+\epsilon)$.

Hence, we have that $T:\mathcal{F}\to\mathcal{F}$ is a contraction in a closed subset of a Banach space. The Banach fixed point Theorem implies the existence of a unique fixed point, hence the proof of Step 2 is complete.
\end{proof}
Putting together Step 1 and Step 2, since $[0,1]$ is connected, we conclude the
existence of a unique $\{f_s\}_{s\in[0,1]}$ that solves Equation (\ref{inteq}) for all $s\in[0,1]$ (starting
with $f_0\in N_{\eta,0}$ for some $\eta>0$).
This family is continuous in $s$, therefore continuously differentiable in $s$. Completing the proof of Theorem \ref{secondpropF}.
\end{proof}

\section{Extensions}\label{Extensions}

The evolution scheme presented in the previous sections solves constructively the following problem:
given a smooth enough bounded domain $X$ and coefficient $\gamma$, and given any point $\hat{x}\in X$,
find a boundary condition $\hat{f}$ such that the solution $u$ of 
\begin{align*}
\begin{cases}\nabla\cdot(\gamma\nabla u)=0 &\textnormal{ in } X\\
u=\hat{f} &\textnormal{ in } \partial X
\end{cases}
\end{align*}
satisfies $|\nabla u(\hat{x})|\geq 1$. We now consider two possible extensions, one that imposes a condition
over finitely many  points instead of only one, and one that imposes a condition involving finitely many equations.

\subsection{Finitely Many Points.}

Given a bounded domain $X$, a coefficient $\gamma$ and finitely many different points $\{\hat{x}_i\}_{i\in I}\subset X$, the goal is
to find a boundary condition $\hat{f}$, such that the solution $u$ of the equation
\begin{align*}
\begin{cases}\nabla\cdot(\gamma\nabla u)=0 &\textnormal{ in } X\\
u=\hat{f} &\textnormal{ in } \partial X
\end{cases}
\end{align*}
satisfies $|\nabla u(\hat{x}_i)|\geq1, \forall i\in I$. The process is analogous to the case of one point. We
are  now considering multiple constraints to be satisfied although we have only one equation and one boundary condition to control.
The scheme proposes to start with an appropriate $\gamma_0, f_0$  (e.g. $\gamma_0\equiv1$ and $f_0(x_1,x_2,...,x_n)=x_1$)  and construct $f_s$ such that the solution $u_s$ of
\begin{align*}
\begin{cases}\nabla\cdot(\gamma_s\nabla u_s)=0 &\textnormal{ in } X\\
u_s=f_s &\textnormal{ in } \partial X
\end{cases}
\end{align*}
and the solution $u_s'$ of
\begin{align*}
\begin{cases}\nabla\cdot(\gamma_s\nabla u_s')+\nabla\cdot(\gamma_s'\nabla u_s)=0 &\textnormal{ in } X\\
u_s'=f_s' &\textnormal{ in } \partial X
\end{cases}
\end{align*}
satisfy $\frac{d}{ds}|\nabla u_s(\hat{x}_i)|^2=2\nabla u_s(\hat{x}_i)\cdot\nabla u_s'(\hat{x}_i)\geq0, \forall i\in I$. Again, we construct $f_s$
as the solution of an initial value problem
\begin{align*}
\begin{cases}
\frac{\partial f_s}{\partial s}=F(f_s,s)\\
f_s|_{s=0}=f_0
\end{cases}
\end{align*}
for an appropriate $F$ defined below.

We construct the functional $F:C^{k+2,\alpha}(\partial X)\times[0,1]\to C^{k+2,\alpha}(\partial X)$
as follows. We assume $X$ is a $C^{k+3,\alpha}$ bounded domain. Let $\gamma_0,\gamma\in C^{k+n+3}(\overline{X})$
and let $\gamma_s=[(1-s)\gamma_0+s\gamma],s\in[0,1]$. Assume $0<c<\gamma_0,\gamma<C<\infty$.
For $s\in[0,1]$ and for $f\in C^{k+2,\alpha}(\partial X)$
let $u$ be the $C^{k+2,\alpha}(X)$ solution of
\begin{align*}
\begin{cases}\nabla\cdot(\gamma_s\nabla u)=0 &\textnormal{ in } X\\
u=f &\textnormal{ in } \partial X.
\end{cases}
\end{align*}
Let $v$ be the $C^{k+2,\alpha}(X)$ solution of 
\begin{align*}
\begin{cases}\nabla\cdot(\gamma_s\nabla v)+\nabla\cdot((\gamma-\gamma_0)\nabla u)=0 &\textnormal{ in } X\\
v= g &\textnormal{ in } \partial X
\end{cases}
\end{align*}
where $g\in C^{k+2,\alpha}(\partial X)$ will be prescribed below.

The difference with the previous process appears in that we need to consider many auxiliary problems.
Let $\lambda_i, i\in I$, be the solutions of 
\begin{align*}
\begin{cases}\nabla\cdot(\gamma_s\nabla \lambda_i)=\nabla u(\hat{x}_i)\cdot\nabla\delta_{\hat{x}_i}&\textnormal{ in } X\\
\lambda_i=0&\textnormal{ in } \partial X.
\end{cases}
\end{align*}
$\mbox{}$From Theorem \ref{existslambda}, $\lambda_i\in L^p(X)$ and 
$(\gamma_s \partial\lambda_i/\partial\nu)\in C^{k+2,\alpha}(\partial X)$ depend continuously on $s$. Since the 
$\{\hat{x}_i\}_{i\in I}$ are different, the $\{\lambda_i\}_{i\in I}$
are linearly independent and the $\{\gamma_s\partial\lambda_i/\partial\nu\}_{i\in I}$ are linearly independent (as long as
$\nabla u(\hat{x}_i)\neq0,\forall i\in I$). This is proved exactly as in Theorem \ref{injectlambda}.

By integration by parts, we obtain for all $i\in I$ that
\begin{align*}
\nabla u(\hat{x}_i)\cdot\nabla v(\hat{x}_i)&=\int_X\lambda_i \nabla\cdot((\gamma-\gamma_0)\nabla u)
-\int_{\partial X}\gamma_s\frac{\partial \lambda_i}{\partial \nu}g,
\end{align*}
and we let $g \in \Span(\{\gamma_s\partial\lambda_i/\partial\nu\}_{i\in I})\subset C^{k+2,\alpha}(\partial X)$
be such that, $\forall i\in I$
\begin{align*}
\int_{\partial X}\gamma_s\frac{\partial \lambda_i}{\partial \nu}g
=\int_X\lambda_i \nabla\cdot((\gamma-\gamma_0)\nabla u).
\end{align*}
Hence, $\nabla u(\hat{x}_i)\cdot \nabla v(\hat{x}_i)=0,\forall i\in I$. Since the
$\{\gamma_s\partial\lambda_i/\partial\nu\}_{i\in I}$ are linearly independent in $L^2(\partial X)$, such a $g$
exists and is unique
(as long as $\nabla u(\hat{x}_i)\neq0,\forall i\in I$).

Also, by an extension of the finite dimensional argument leading to Corollary \ref{estlambda}
(adding the linear independence of the $\{\gamma_s\partial\lambda_i/\partial\nu\}_{i\in I}$),
there exist constants $\tilde{C_1},\tilde{C_2}$, independent of $s$ and $f$,
such that
\begin{align*}
\Big|g\Big|_{k+2,\alpha,\partial X}\leq \tilde{C_1}\Big|u\Big|_{2, X} \leq \tilde{C_2}\Big|f\Big|_{k+2,\alpha,\partial X} .
\end{align*}
By defining $F(f,s):=g$, the continuity and boundedness of
$F:C^{k+2,\alpha}(\partial X)\times[0,1]\to C^{k+2,\alpha}(\partial X)$ can be proven exactly as it was
done in the previous case
(observing that the evolution with $F$ keeps $|\nabla u_s(\hat{x}_i)|\geq1,\forall i\in I$). This provides the following result:
\begin{theorem}\label{solofODEpoints} Assume $X$ is a $C^{k+3,\alpha}$ bounded domain.
Let $\gamma_0,\gamma\in C^{k+n+3}
(\overline{X})$ and let $\gamma_s=[(1-s)\gamma_0+s\gamma],s\in[0,1]$. Assume $0<c<\gamma_0,\gamma<C<\infty$.
Let $f_0,\gamma_0$ be chosen appropriately (e.g. $\gamma_0\equiv1$ and $f_0(x_1,x_2,...,x_n)=x_1$) .
Define $F:C^{k+2,\alpha}(\partial X)\times[0,1]\to C^{k+2,\alpha}(\partial X)$  as above.
Then there exists a unique solution $\{f_s\}_{s\in[0,1]}$ in $C^1([0,1];C^{k+2,\alpha}(\partial X))$
of the initial value problem
\begin{align*}
\begin{cases}
\frac{\partial}{\partial s}f_s=F(f_s,s)\\
f_s|_{s=0}=f_0.
\end{cases}
\end{align*}
The family $\{f_s\}_{s\in[0,1]}$ constructed in this way, satisfies that each $u_s$, solution of $(P_s)$ with
boundary condition $f_s$, is such that $|\nabla u_s(\hat{x}_i)|\geq1,\forall i\in I,\forall s\in [0,1]$.
\end{theorem}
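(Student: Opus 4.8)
The plan is to repeat the argument that proves Theorem~\ref{secondpropF}, replacing the single auxiliary problem $(A_s)$ and the scalar multiplier $\mu$ by the family $\{\lambda_i\}_{i\in I}$ and by the solution of a finite linear system. First I would check that $F$ is well defined on the relevant set: if $f\in C^{k+2,\alpha}(\partial X)$ is such that the corresponding $u$ has $\nabla u(\hat x_i)\neq 0$ for every $i\in I$, then Theorem~\ref{existslambda} produces $\lambda_i\in L^p(X)\cap C^{k+3,\alpha}(\overline X\setminus\{\hat x_i\})$ with $\gamma_s\partial\lambda_i/\partial\nu\in C^{k+2,\alpha}(\partial X)$, and the family $\{\gamma_s\partial\lambda_i/\partial\nu\}_{i\in I}$ is linearly independent in $L^2(\partial X)$. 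The proof of independence is the one of Theorem~\ref{injectlambda}: if $\sum_i a_i\,\gamma_s\partial\lambda_i/\partial\nu\equiv 0$ on $\partial X$, then $\sum_i a_i\lambda_i$ has zero boundary value and zero $\gamma_s\partial/\partial\nu$ on $\partial X$ and satisfies $\nabla\cdot(\gamma_s\nabla\,(\cdot))=0$ on $X\setminus\bigcup_i\{\hat x_i\}$, so unique continuation forces $\sum_i a_i\lambda_i\equiv 0$; testing against suitable $\varphi\in C_0^\infty(X)$ concentrated near one point at a time (possible since the $\hat x_i$ are distinct and each $\nabla u(\hat x_i)\neq 0$) then yields $a_i=0$ for all $i$. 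Consequently the Gram matrix $M_{ij}=\int_{\partial X}(\gamma_s\partial\lambda_i/\partial\nu)(\gamma_s\partial\lambda_j/\partial\nu)$ is invertible and the conditions $\int_{\partial X}(\gamma_s\partial\lambda_i/\partial\nu)\,g=\int_X\lambda_i\,\nabla\cdot((\gamma-\gamma_0)\nabla u)$ determine a unique $g=F(f,s)\in\Span(\{\gamma_s\partial\lambda_i/\partial\nu\}_{i\in I})$; integrating the equation for the associated $v$ against each $\lambda_i$ by parts gives, exactly as in the one-point case, $\nabla u(\hat x_i)\cdot\nabla v(\hat x_i)=0$ for every $i\in I$.

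Next I would establish the two quantitative facts that drive the continuation scheme, in analogy with Lemma~\ref{boundF}, Corollary~\ref{estlambda}, Lemma~\ref{lipschitz} and Theorem~\ref{lipschitzF}. Using the linearity of $\lambda_i$ in $y_i=\nabla u(\hat x_i)$, write $\lambda_i=|y_i|\,\lambda_i^{\hat y_i}$ and $\gamma_s\partial\lambda_i/\partial\nu=|y_i|\,\gamma_s\partial\lambda_i^{\hat y_i}/\partial\nu$ with $\hat y_i\in S^{n-1}$; then the linear system for the coordinates of $g$ becomes $\widetilde M(s,\hat y)\,\xi=\tilde b$, where $\widetilde M_{ij}=\langle\gamma_s\partial\lambda_i^{\hat y_i}/\partial\nu,\ \gamma_s\partial\lambda_j^{\hat y_j}/\partial\nu\rangle_{L^2(\partial X)}$, $\tilde b_i=\int_X\lambda_i^{\hat y_i}\,\nabla\cdot((\gamma-\gamma_0)\nabla u)$, and $g=\sum_j\xi_j\,\gamma_s\partial\lambda_j^{\hat y_j}/\partial\nu$. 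By Theorem~\ref{existslambda} the maps $(s,\hat y_i)\mapsto\lambda_i^{\hat y_i}\in L^p(X)$ and $(s,\hat y_i)\mapsto\gamma_s\partial\lambda_i^{\hat y_i}/\partial\nu\in C^{k+2,\alpha}(\partial X)$ are continuous (indeed linear in $\hat y_i$), so $\widetilde M$ is a continuous, everywhere invertible matrix-valued function on the compact set $[0,1]\times(S^{n-1})^{|I|}$; hence $\|\widetilde M^{-1}\|$, $\|\widetilde M\|$, $|\lambda_i^{\hat y_i}|_{L^p(X)}$ and $|\gamma_s\partial\lambda_i^{\hat y_i}/\partial\nu|_{k+2,\alpha,\partial X}$ are bounded uniformly in $s$ and $\hat y$. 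This is precisely the multi-point extension of Corollary~\ref{estlambda} and uses the compactness idea of Lemma~\ref{linfin}. Combining these bounds with H\"older's inequality (applied to $\tilde b$) and Theorem~\ref{mainu} yields $|F(f,s)|_{k+2,\alpha,\partial X}\le\tilde C_1|u|_{2,X}\le\tilde C_2|f|_{k+2,\alpha,\partial X}$ with $\tilde C_1,\tilde C_2$ independent of $s$ and $f$; and the same ingredients, together with the smoothness of matrix inversion and the Lipschitz dependence on $f$ of the normalized quantities (valid wherever $\min_i|\nabla u(\hat x_i)|>\eta$), give $|F(f_1,s)-F(f_2,s)|_{k+2,\alpha,\partial X}\le\kappa\,(1+|f_1|_{k+2,\alpha,\partial X}+|f_2|_{k+2,\alpha,\partial X})\,|f_1-f_2|_{k+2,\alpha,\partial X}$ uniformly in $s$.

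Finally I would run the Picard--Lindel\"of continuation exactly as in the proof of Theorem~\ref{secondpropF}. Rewrite the problem as $f_s=f_0+\int_0^sF(f_\tau,\tau)\,d\tau$. From $|F(f,s)|_{k+2,\alpha,\partial X}\le\rho|f|_{k+2,\alpha,\partial X}$, Gr\"onwall's inequality gives $|f_s|_{k+2,\alpha,\partial X}\le e^{\rho s}|f_0|_{k+2,\alpha,\partial X}$ on any interval of existence, so no blow-up can occur on $[0,1]$ and any solution on $[0,t)$ extends continuously to $s=t$. Because $F$ forces $\nabla u_s(\hat x_i)\cdot\nabla u_s'(\hat x_i)=0$, the differentiation formula of Theorem~\ref{frame} (applied at each $\hat x_i$) gives $\frac{d}{ds}|\nabla u_s(\hat x_i)|^2=0$, hence $|\nabla u_s(\hat x_i)|\equiv|\nabla u_0(\hat x_i)|\ge 1$; choosing $\eta\in(0,1)$ (so that $f_0\in N_{\eta,0}$ at every $\hat x_i$, using $\gamma_0\equiv 1$ and $f_0=x_1$), the solution stays in the region $\min_i|\nabla u_s(\hat x_i)|>\eta$ where the uniform Lipschitz bound holds. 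Local existence and uniqueness then follow from the Banach fixed point theorem applied to the Picard operator on a small closed ball of $C([t,t+\epsilon);C^{k+2,\alpha}(\partial X))$, the smooth dependence of $u$ on its data (Theorem~\ref{mainu}) guaranteeing that the iterates remain in the admissible region. Patching these local solutions over the connected interval $[0,1]$ gives the unique $s\mapsto f_s\in C^1([0,1];C^{k+2,\alpha}(\partial X))$, and by construction each $u_s$ satisfies $|\nabla u_s(\hat x_i)|\ge 1$ for all $i\in I$ and $s\in[0,1]$. The main obstacle is the second paragraph: upgrading the pointwise linear independence of $\{\gamma_s\partial\lambda_i/\partial\nu\}_{i\in I}$ to a uniform invertibility bound for $\widetilde M$, independent of $s$ and of the directions $\nabla u(\hat x_i)$; everything else is a routine transcription of the single-point arguments.
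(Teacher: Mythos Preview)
Your proposal is correct and follows essentially the same route as the paper, which itself only sketches the argument by saying that the linear independence is ``proved exactly as in Theorem~\ref{injectlambda}'', that the bound $|g|_{k+2,\alpha,\partial X}\le\tilde C_2|f|_{k+2,\alpha,\partial X}$ follows ``by an extension of the finite dimensional argument leading to Corollary~\ref{estlambda}'', and that continuity and boundedness of $F$ ``can be proven exactly as it was done in the previous case''. Your normalization to unit directions $\hat y_i\in S^{n-1}$ and the compactness argument on $[0,1]\times(S^{n-1})^{|I|}$ for the uniform invertibility of the Gram matrix is precisely the multi-point analogue of Lemma~\ref{linfin}/Corollary~\ref{estlambda} that the paper alludes to, and the Picard--Lindel\"of continuation is identical to the proof of Theorem~\ref{secondpropF}.
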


Hence $\hat{f}=f_s|_{s=1}$ solves the problem presented at the beginning of this Subsection, with a condition
imposed over finitely many points.

\subsection{System of Equations.}

Given a bounded domain $X\subset\R^3$, a coefficient $\gamma$ and fixed point $\hat{x}\in X$, the objective is
to find boundary conditions $\{\hat{f}^i\}_{i=1,2,3}$, such that the solutions $u^i,i=1,2,3$ of the equations
\begin{align*}
\begin{cases}\nabla\cdot(\gamma\nabla u^i)=0 &\textnormal{ in } X\\
u^i(x)=\hat{f}^i(x) &\textnormal{ in } \partial X
\end{cases}
\end{align*}
satisfy $\det[(\nabla u^i(\hat{x}))_{i=1,2,3}]\geq1$. 
We now have only one constraint to satisfy. It involves multiple equations and multiple boundary
conditions.
The scheme proposes to start with appropriate $\gamma_0, \{f_0^i\}_{i=1,2,3}$  (e.g. $\gamma_0\equiv1$ and $f_0^i(x_1,x_2,x_3)=x_i$)  and construct $f_s^i,i=1,2,3$ such that the solutions $u_s^i$ of
\begin{align*}
\begin{cases}\nabla\cdot(\gamma_s\nabla u_s^i)=0 &\textnormal{ in } X\\
u_s^i(x)=f_s^i(x) &\textnormal{ in } \partial X
\end{cases}
\end{align*}
and the solution $(u_s^i)'$ of
\begin{align*}
\begin{cases}\nabla\cdot(\gamma_s\nabla (u_s^i)')+\nabla\cdot(\gamma_s'\nabla u_s^i)=0 &\textnormal{ in } X\\
(u_s^i)'(x)=(f_s^i)'(x) &\textnormal{ in } \partial X
\end{cases}
\end{align*}
satisfy $\frac{d}{d s}\det[(\nabla u^i_s(\hat{x}))_{i=1,2,3}]= \sum_{i=1,2,3}\nabla (u_s^i)'(\hat{x})\cdot\Big(\nabla u_s^{i+1}(\hat{x})\times\nabla u_s^{i+2}(\hat{x})\Big) \geq0$
(we consider the expressions $i+1$ and $i+2$ modulo 3). We construct $f_s^i$ 
as the solutions of a system of ODE
\begin{align*}
\begin{cases}
\frac{\partial f_s^i}{\partial s}=F^i((f_s^i)_{i=1,2,3},s)\\
f_s^i|_{s=0}=f_0^i
\end{cases}
\end{align*}
for an appropriate $(F^i)_{i=1,2,3}$ defined below.

We construct the functionals $F^i:(C^{k+2,\alpha}(\partial X))^3\times[0,1]\to C^{k+2,\alpha}(\partial X)$
as follows. Assume $X$ is a $C^{k+3,\alpha}$ bounded domain. Let $\gamma_0,\gamma\in C^{k+n+3}(\overline{X})$
and let $\gamma_s=[(1-s)\gamma_0+s\gamma],s\in[0,1]$. Assume $0<c<\gamma_0,\gamma<C<\infty$.
For $s\in[0,1]$ and for $(f^i)_{i=1,2,3}\in (C^{k+2,\alpha}(\partial X))^3$
let $u^i$ be the $C^{k+2,\alpha}(X)$ solutions of
\begin{align*}
\begin{cases}\nabla\cdot(\gamma_s\nabla u^i)=0 &\textnormal{ in } X\\
u^i(x)=f^i(x) &\textnormal{ in } \partial X.
\end{cases}
\end{align*}
Let $v^i$ be the $C^{k+2,\alpha}(X)$ solutions of 
\begin{align*}
\begin{cases}\nabla\cdot(\gamma_s\nabla v^i)+\nabla\cdot((\gamma-\gamma_0)\nabla u^i)=0 &\textnormal{ in } X\\
v^i(x)= g^i(x) &\textnormal{ in } \partial X,
\end{cases}
\end{align*}
where $g^i\in C^{k+2,\alpha}(\partial X)$ will be prescribed below.

For this system let us consider the following auxiliary problems. Let $\lambda^i$, be the solutions of 
\begin{align*}
\begin{cases}\nabla\cdot(\gamma_s\nabla \lambda^i)=\Big(\nabla u^{i+1}(\hat{x})\times \nabla u^{i+2}(\hat{x})\Big)\cdot\nabla\delta_{\hat{x}}&\textnormal{ in } X\\
\lambda=0&\textnormal{ in } \partial X.
\end{cases}
\end{align*}
$\mbox{}$From Theorem \ref{existslambda}, $\lambda^i\in L^p(X)$ and 
$(\gamma_s \partial\lambda^i/\partial\nu)\in C^{k+2,\alpha}(\partial X)$ depend continuously on $s$.
By integration by parts and summation we obtain
\begin{align*}
 \sum_{i=1,2,3}\Big(\nabla u^{i+1}(\hat{x})\times\nabla u^{i+2}(\hat{x})\Big)\cdot\nabla v^i(\hat{x}) 
&= \sum_{i=1,2,3}\int_X\lambda^i \nabla\cdot((\gamma-\gamma_0)\nabla u^i)\\ &
- \sum_{i=1,2,3}\int_{\partial X}\gamma_s\frac{\partial \lambda^i}{\partial \nu}g^i.
\end{align*}
Let $g^i=\mu\gamma_s\partial\lambda^i/\partial\nu$, with $\mu$ chosen as 
\begin{align*}
\mu&= \sum_{i=1,2,3}\int_X\lambda^i \nabla\cdot((\gamma-\gamma_0)\nabla u^i)\Big/ \sum_{i=1,2,3}
\int_{\partial X}\Big(\gamma_s\frac{\partial \lambda_i}{\partial \nu}\Big)^2.
\end{align*}
Hence $\sum_{i=1,2,3}\nabla v^i(\hat{x})\cdot\Big(\nabla u^{i+1}(\hat{x})\times\nabla u^{i+2}(\hat{x})\Big)=0$.

By defining $F^i((f^i)_{i=1,2,3},s):=g^i$ we can prove the boundedness and continuity of
$F^i:(C^{k+2,\alpha}(\partial X))^3\times[0,1]\to C^{k+2,\alpha}(\partial X)$ as before. This yields
a Theorem analogous to Theorems \ref{secondpropF} and \ref{solofODEpoints}. An important aspect
for the argument to work is that we start with $\det[(\nabla u^i(\hat{x}))_{i=1,2,3}]\geq1$ and the evolution with $F^i$
maintains that property, hence $ \sum_{i=1,2,3}\int_{\partial X}\Big(\gamma_s\frac{\partial \lambda_i}{\partial \nu}\Big)^2$
remains uniformly bounded away from zero.

\begin{theorem}\label{solofODEdet} Assume $X$ is a $C^{k+3,\alpha}$ bounded domain.
Let $\gamma_0,\gamma\in C^{k+n+3}
(\overline{X})$ and let $\gamma_s=[(1-s)\gamma_0+s\gamma],s\in[0,1]$. Assume $0<c<\gamma_0,\gamma<C<\infty$.
Let $(f_0^i)_{i=1,2,3}$ and $\gamma_0$ be chosen appropriately (e.g. $\gamma_0\equiv1$ and $f_0^i(x_1,x_2,x_3)=x_i$).
Define $F^i:C^{k+2,\alpha}(\partial X)\times[0,1]\to C^{k+2,\alpha}
(\partial X)$  as above. Then, there exists a unique solution $s\mapsto (f_s^i)$ in $C^1([0,1];(C^{k+2,\alpha}(\partial X))^3)$
of the system of ODE
\begin{align*}
\begin{cases}
\frac{\partial}{\partial s}f_s^i=F^i((f_s^i)_{i=1,2,3},s)\\
f_s^i|_{s=0}=f_0^i.
\end{cases}
\end{align*}
For all $s\in[0,1]$, the solutions $u_s^i$ of $(P_s)$ with corresponding boundary conditions
$f_s^i$ are such that  $\det[(\nabla u^i(\hat{x}))_{i=1,2,3}]\geq1$.
\end{theorem}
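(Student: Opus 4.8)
The plan is to follow, almost verbatim, the proof of Theorem~\ref{secondpropF} and of its multi-point variant Theorem~\ref{solofODEpoints}: we recast the system of ODEs as the fixed-point (integral) problem
\[
f_s^i = f_0^i + \int_0^s F^i\big((f_\tau^j)_{j=1,2,3},\tau\big)\,d\tau,\qquad i=1,2,3,\ \ s\in[0,1],
\]
in the Banach space $C\big([0,1];(C^{k+2,\alpha}(\partial X))^3\big)$, and solve it by the two-step Picard--Lindel\"of continuation used for Theorem~\ref{secondpropF}. For $\eta\in(0,1)$ and $s\in[0,1]$ let $N_{\eta,s}\subset(C^{k+2,\alpha}(\partial X))^3$ be the set of triples $(f^i)$ whose associated solutions $u^i$ (of $\nabla\cdot(\gamma_s\nabla u^i)=0$, $u^i=f^i$ on $\partial X$) satisfy $\det[(\nabla u^i(\hat{x}))_{i=1,2,3}]>\eta$; by the smooth dependence of $u^i$ on $(f^i,s)$ (Theorem~\ref{mainu}) this is an open condition, stable under small perturbations of $(f^i)$ and of $s$. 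Since $\gamma_0\equiv1$, $f_0^i=x_i$ give $u_0^i=x_i$ and $\det[(\nabla u_0^i(\hat{x}))]=1$, the initial datum lies in $N_{\eta,0}$ for every $\eta<1$; fix $\eta:=\tfrac12$.

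Two uniform estimates drive the argument, proved exactly as Lemma~\ref{boundF} and Theorem~\ref{lipschitzF}. Write $y_i:=\nabla u^{i+1}(\hat{x})\times\nabla u^{i+2}(\hat{x})$ (indices mod $3$), so that $\lambda^i$ solves $(A_s)$ with direction $y_i$, and $F^i=\mu\,\gamma_s\partial\lambda^i/\partial\nu$ with $\mu=\sum_j\int_X\lambda^j\nabla\cdot((\gamma-\gamma_0)\nabla u^j)\big/\sum_j|\gamma_s\partial\lambda^j/\partial\nu|^2_{L^2(\partial X)}$. By Theorems~\ref{existslambda}, \ref{injectlambda} and Corollary~\ref{estlambda} applied to each $\lambda^i$ as a linear function of $y_i$ (injective when $y_i\ne0$), the quantities $|\lambda^i|_{L^p(X)}$, $|\gamma_s\partial\lambda^i/\partial\nu|_{k+2,\alpha,\partial X}$ and $|\gamma_s\partial\lambda^i/\partial\nu|_{L^2(\partial X)}$ are all comparable to $|y_i|$, uniformly in $s$. \textbf{(a) Linear bound.} On $N_{\eta,s}$,
\begin{align*}
\sum_i|F^i|_{k+2,\alpha,\partial X}&=\sum_i|\mu|\,\big|\gamma_s\tfrac{\partial\lambda^i}{\partial\nu}\big|_{k+2,\alpha,\partial X}\;\lesssim\;\frac{\big(\sum_i|y_i|\big)\sum_j|y_j|\,|u^j|_{2,X}}{\sum_j|y_j|^2}\\
&\lesssim\Big(\sum_j|u^j|_{2,X}^2\Big)^{1/2}\;\lesssim\;\sum_j|f^j|_{k+2,\alpha,\partial X},
\end{align*}
using H\"older in $X$, Cauchy--Schwarz in $j$ to cancel the denominator, and Theorem~\ref{mainu}; the constant is independent of $s$ and of the (possibly small) value of the denominator. \textbf{(b) Local Lipschitz bound.} There is $\kappa_\eta>0$, independent of $s$, with
\begin{align*}
&\sum_i\big|F^i((f_1^j)_j,s)-F^i((f_2^j)_j,s)\big|_{k+2,\alpha,\partial X}\\
&\qquad\le\kappa_\eta\Big(1+\textstyle\sum_j|f_1^j|_{k+2,\alpha,\partial X}+\sum_j|f_2^j|_{k+2,\alpha,\partial X}\Big)\sum_j|f_1^j-f_2^j|_{k+2,\alpha,\partial X}
\end{align*}
for $(f_1^j)_j,(f_2^j)_j\in N_{\eta,s}$. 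As in the proof of Theorem~\ref{lipschitzF}, one writes $F^i$ as a product of the normalized factors $\lambda^i/|\gamma_s\partial\lambda^i/\partial\nu|_{L^2}$, $\gamma_s\partial\lambda^i/\partial\nu\big/|\gamma_s\partial\lambda^i/\partial\nu|_{L^2}$, $\nabla\cdot((\gamma-\gamma_0)\nabla u^j)$ and the scalar $\mu$; by the analogue of Lemma~\ref{lipschitz} each is bounded and locally Lipschitz in $(f^j)_j$ on $N_{\eta,s}$, uniformly in $s$. The only point not already present in the single-point case is that $y_i$ now depends \emph{bilinearly} on $\nabla u^{i+1}(\hat{x}),\nabla u^{i+2}(\hat{x})$, hence quadratically on $(f^j)_j$; on bounded subsets of $(C^{k+2,\alpha}(\partial X))^3$ this is still bounded and locally Lipschitz, and on $N_{\eta,s}$ the denominator of $\mu$ stays bounded away from $0$: indeed $\sum_j|\gamma_s\partial\lambda^j/\partial\nu|^2_{L^2}\gtrsim\sum_j|y_j|^2$ by Corollary~\ref{estlambda}, and Hadamard's inequality applied to the matrix $[\,y_1\,|\,y_2\,|\,y_3\,]$, whose determinant equals $\det[(\nabla u^i(\hat{x}))_{i=1,2,3}]^2>\eta^2$ by the adjugate identity in $\R^3$, gives $\prod_j|y_j|>\eta^2$, hence $\sum_j|y_j|^2\ge\max_j|y_j|^2>\eta^{4/3}$. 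Thus $\mu$ is itself bounded and locally Lipschitz on $N_{\eta,s}$, and (b) follows since sums and products of bounded locally Lipschitz functions have the same property.

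Along any solution of the integral equation the constraint is \emph{conserved}: the choice $g^i=F^i$ makes $\sum_i\nabla v^i(\hat{x})\cdot\big(\nabla u^{i+1}(\hat{x})\times\nabla u^{i+2}(\hat{x})\big)=0$, i.e.\ (by Theorem~\ref{frame} applied to each $u_s^i$) $\frac{d}{ds}\det[(\nabla u_s^i(\hat{x}))]=0$, so $\det[(\nabla u_s^i(\hat{x}))]\equiv\det[(\nabla u_0^i(\hat{x}))]=1>\eta$ wherever the solution exists; in particular the solution stays in $N_{\eta,s}$. With (a), (b) and this conservation the continuation is identical to the proof of Theorem~\ref{secondpropF}. \emph{Step 1:} if $(f_s^i)$ solves the integral equation on $[0,t)$, then (a) and Gr\"onwall bound $\sum_i|f_s^i|_{k+2,\alpha,\partial X}$ by $e^{\kappa t}\sum_i|f_0^i|_{k+2,\alpha,\partial X}$ on $[0,t)$, whence $(f_s^i)$ is uniformly Lipschitz in $s$ and extends continuously to $s=t$, the extension still lying in $N_{\eta,t}$ by conservation. \emph{Step 2:} if $(f^i)\in N_{\eta,t}$, smooth dependence (Theorem~\ref{mainu}) gives $\epsilon,\delta>0$ such that $\delta$-perturbations remain in $N_{\eta/2,s}$ for $s\in[t,t+\epsilon)$, and then (a)--(b) make the integral operator a contraction on the $\delta$-ball of $C\big([t,t+\epsilon);(C^{k+2,\alpha}(\partial X))^3\big)$ once $\epsilon$ is small, so Banach's fixed-point theorem yields a unique local solution. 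Gluing Steps 1 and 2 over the connected interval $[0,1]$ produces the unique $s\mapsto(f_s^i)\in C\big([0,1];(C^{k+2,\alpha}(\partial X))^3\big)$ solving the integral equation; it is $C^1$ in $s$ by continuity of the $F^i$, and the corresponding $u_s^i$ satisfy $\det[(\nabla u_s^i(\hat{x}))]\equiv1\ge1$, in particular at $s=1$.

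The main obstacle is estimate (b): one must extend Corollary~\ref{estlambda} to the three auxiliary solutions $\lambda^i$ simultaneously (their linear independence when $\det\ne0$ is proved exactly as in Theorem~\ref{injectlambda}) and, above all, control the \emph{new quadratic} dependence of the auxiliary sources $y_i=\nabla u^{i+1}(\hat{x})\times\nabla u^{i+2}(\hat{x})$ on the controls; this is precisely where one needs that $\det[(\nabla u_s^i(\hat{x}))]$ is conserved, hence bounded away from $0$, along the flow, so that the multiplier $\mu$ remains bounded and Lipschitz. Everything else is a routine transcription of the single-point proof of Theorem~\ref{secondpropF}.
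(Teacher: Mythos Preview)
Your proposal is correct and follows precisely the route the paper indicates (the paper gives no detailed proof, only states that ``the boundedness and continuity of $F^i$ \ldots\ can be proven exactly as before'' and that the key point is that the evolution keeps $\det[(\nabla u^i(\hat{x}))]\ge1$ so the denominator of $\mu$ stays bounded away from zero). Your Hadamard/adjugate argument making this lower bound explicit is a nice addition; the only cosmetic point is that, because $y_i$ depends bilinearly on the $f^j$, the Lipschitz constant in (b) may grow polynomially rather than linearly in $\sum_j|f^j|$, but since Step~2 uses only local Lipschitz continuity on bounded subsets of $N_{\eta,s}$ this does not affect the argument.
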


This Theorem produces $\hat{f}=f_s|_{s=1}$ as the solution of the problem described 
at the beginning of this Subsection, with a condition involving finitely many equations.

\begin{remark}
\label{rem:mu}\rm In the definition of $F^i$ above, we could redefine $\mu=\min(0,\mu)$, resembling more closely the
construction presented in Definition \ref{definitionF}. Such a redefinition of $\mu$ provides a boundary condition
$\{f^i_s\}$ with $\{(f^i_s)'\}$ of minimal $L^2(\partial X)^3$ norm for each $s\in[0,1]$, among all $\{f^i_s\}$
that produce non-decreasing determinants.
\end{remark}

\begin{remark}
\label{rem:multilinear}\rm The construction presented in this Subsection works in more general settings. We may
consider $X\subset\R^n$ and replace $\det[(\nabla u^i(\hat{x}))_{i=1,2,3}]$ by
$L[(\nabla u^i(\hat{x}))_{i=1,...,m}]$ for any multi-linear function $L:(\R^n)^m\to\R$.

And if $H:(\R^n)^m\to\R$ is a continuously differentiable function with differential
$DH(z)$ uniformly bounded away from zero in the set $\{z\in (\R^n)^m: H(z)\geq 1\}$,
then the construction presented in this Subsection also works when we replace
$\det[(\nabla u^i(\hat{x}))_{i=1,2,3}]$ by $H(\nabla u^i(\hat{x})_{i=1,...,m})$.
\end{remark}

\begin{remark}
\label{rem:multipoints}\rm Extensions for conditions involving multiple equations at finitely many points can also be addressed
with this scheme.
\end{remark}

\section*{Acknowledgment} GB was partially funded by NSF grant DMS-1108608 and AFOSR Grant NSSEFF- FA9550-10-1-0194. MC was partially funded by Conicyt-Chile grant Fondecyt \#11090310.

\bibliographystyle{plain}

\end{document}